\newtheorem{theorem}{Theorem}
\newtheorem{lemma}[theorem]{Lemma}
\newtheorem{remark}{Remark}
	\newtheorem{prop}{Proposition}
\newcommand{\A}{\mathcal{A}}
\newcommand{\B}{\mathcal{B}}
\newcommand{\g}{\mathbf{g}}
\newcommand{\G}{\mathbb{G}}
\newcommand{\z}{\mathbf{z}}
\newcommand{\Z}{\mathbb{Z}}
\DeclarePairedDelimiter{\dbracket}{\llbracket}{\rrbracket}
\begin{document}

\begin{frontmatter}
\title{A free central-limit theorem for dynamical systems.}

\begin{aug}
\author[A]{\fnms{Morgane} \snm{Austern}\ead[label=e1]{Microsoft Research}},
\address[A]{One memorial drive, Cambridge MA 02142, \printead{e1}}

\end{aug}

\begin{abstract}
The free central-limit theorem, a fundamental theorem in free probability, states that empirical averages of freely independent random variables are asymptotically semi-circular. We extend this theorem to general dynamical systems of operators that we define using a free random variable $X$ and a group of *-automorphims describing the evolution of $X$. We introduce free mixing coefficients that measure how far a dynamical system is from being freely independent. Under conditions on those coefficients, we prove that the free central-limit theorem also holds for these processes and provide Berry-Essen bounds. We generalize this to triangular arrays and U-statistics. Finally we draw connections with classical probability and random matrix theory with a series of examples.
\end{abstract}

\begin{keyword}[class=MSC2020]
\kwd[Primary ]{46L54}
\kwd{	46L55, }
\kwd[; secondary ]{60B20}
\end{keyword}

\begin{keyword}
\kwd{free central-limit theorem}
\kwd{dynamical systems}
\kwd{free mixing coefficients}
\kwd{semi-circular distribution}
\kwd{u-statistics}
\kwd{ergodic theory}
\end{keyword}

\end{frontmatter}
\section{Introduction}

The free central-limit theorem is a key theorem in free probability. It states that sums of freely independent operators are asymptotically semi-circular.
More precisely, let $\mathcal{A}$ be a Von-Neumann algebra with a normal faithful trace $\tau$ and adjoint operator $*$.
If $X_1,\dots,X_n$ are freely independent copies of a  free self-adjoint random variable $X$ satisfying $\tau(X)=0$ and $\tau(X^2)=1$ then the following converges to a semi-circular distribution: $$S_n:=\frac{X_1+\dots+X_n}{\sqrt{n}}\rightarrow \mu_{\rm{sc}}.$$ This theorem has deep connections with the semi-circular law in random matrix theory and can be used to study the asymptotic eigenvalue distribution of the Gaussian unitary ensemble (G.U.E). 
However this result and subsequent ones rely on the assumption that $(X_1,\dots,X_n)$ are  (asymptotically) freely independent. This is a strong assumption that might not be respected by certain processes of interest. For example sequences of dependent matrices are not generally freely independent. 
Some works offered a first step in relaxing this assumption. Notably \cite{popa2007freeness} proved the free central theorem assuming conditional free independence. However those generalizations still rely on a form of (conditional) free independence and do not allow for some weak ``free dependence" between the free random variables $(X_i)$. Moreover the free central limit theorem exclusively studies empirical averages and not more complex quantities such as U-statistics of which $\sum_{i,j\le n}\Phi(X_i,X_j)$ (for a measurable function $\Phi$) is an example.
~This  represents a disconnect with what is known for the classical central-limit theorem where the assumption of independence has been successfully relaxed for general dynamical systems \cite{bolthausen1982central,austern2018limit}, and where the limit of U-statistics are extensively studied. 

\noindent This paper bridges this gap. We study empirical averages and U-statistics of non-commutative dynamical systems. To do so we define ``free mixing" coefficients that quantify how far  from being (conditionally) freely independent the process is. If those free mixing coefficients are small enough, then empirical averages (resp. U-statistics) of the dynamical system  will asymptotically follow a semi-circular distribution whose radius depends on the structure of the process. More formally, 
we consider $\G$ to be a locally compact second countable (l.c.s.c) amenable  group which defines a group  $(K_g)$ of $*$-automorphisms on $\mathcal{A}$. Those mappings are assumed to have the key property that they leave the trace invariant: $\tau(\cdot):=\tau \circ K_g(\cdot)$. Therefore if $X\in \A$ is a free random variable then the process $(X_g):=(K_g(X))$ forms a process whose distribution is invariant under the action of the group $\G$. Examples  include stationary and quantum exchangeable sequences of free random variables. We denote by $|\cdot|$ the Haar-measure of the group $\G$ and $(A_n)$ a F\"{o}lner sequence. Under ``free mixing" conditions we prove that the limiting behavior of $\frac{1}{\sqrt{|A_n|}}\int_{A_n} X_g d|g|$ is semi-circular. We extend this, under moment conditions, to unbounded operators as well as to triangular arrays. We provide Berry-Essen type of bounds. Then, we consider non-commutative U-statistics and under similar free mixing conditions prove that their limiting distribution is asymptotically semi-circular. Finally we illustrate the utility of this new notion of `` free mixing" by a series of example notably from random matrix theory.

\subsection{Related literature} 

The non-commutative law of large numbers \cite{bercovici1996law} states that if $X_1,\dots,X_n$ are freely independent and identically distributed random variables satisfying $\tau(X_1)=0$ then under moment conditions we have $\frac{1}{n}\sum_{i\le n}X_i\rightarrow 0$. This key result was greatly extended to stationary sequences of non-commutative operators  \cite{lance1976ergodic}; as well as to general dynamical systems of operators \cite{conze1978ergodic}. We study, under free mixing conditions, the speed of convergence of those generalized law of large numbers.
\\\noindent The free central limit-theorem, first introduced by Voiculescu \cite{voiculescu1986addition}, is a second order result which establishes a speed of convergence for the classical law of large numbers. 
 It was later extended in \cite{bercovici1993free}, under moment conditions, to free convolutions of unbounded operators, and by Speicher to the multivariate case in \cite{speicher1990new}. Moreover it was generalized to operators that are conditionally free (also called 'free with amalgamation') in \cite{popa2007freeness}, as well as to operators that satisfy a slightly weaker notion of free independence in \cite{kargin2007proof}.\\\noindent Just as in classical probability, Berry-Essen type bounds guarantee a speed of convergence for the free central-limit theorem. They were first established for sums of bounded free operators \cite{kargin2007berry} then later for free unbounded operators \cite{chistyakov2008limit}, and finally to the multivariate and conditionally free case \cite{mai2013operator}. 
\\\noindent 
Semi-circular limits have also been of great interest in the random matrix literature. It is well-known that Wigner random matrices have an empirical spectral distribution that is asymptotically semi-circular \cite{wigner1967random,arnold1971wigner}. This classical result has been successfully extended to random matrices with dependent entries \cite{hochstattler2016semicircle} as well to random matrices with exchangeable entries \cite{adamczak2016circular}. Finally inspired by application in communication theory operator-valued matrices have been the object of increased interest \cite{banna2018operator, mai2013operator}.

\section{Main results}

\subsection{Definitions and notations}
Let $\mathcal{A}$ be a Von Neumann algebra with a normal faithful trace $\tau$ and adjoint operator $*$.  We write $\pi(\mathcal{A})$ its faithful representation on a Hilbert space $\mathcal{H},$ and for simplicity  identify it with $\mathcal{A}$. Throughout this paper, an important example will be $\A_m:=L_{\infty}\otimes M_m(\mathbb{C}) $, the set of random matrices admitting infinite moments. In this case, the trace will be taken to be $X\rightarrow \frac{1}{m}\mathbb{E}\big(\rm{Tr}(X)\big)$.  {We say that an operator $X$ is affiliated with $\mathcal {A}$ if all its spectral projections belong to $\mathcal{A}$. It is, in addition, $\tau$-measurable if for all $\delta$ there is a projection $p$ of $\mathcal{H}$ such that $\tau(I-p)\le \delta$ and $pH\subset \mathcal{D}(X)$ where $\mathcal{D}(X)$ designates the domain of $X$. We write $\mathcal{A}_{\tau}$ the set of $\tau$-measurable operators, which forms a Hausdroff complete $*$-algebra.} 

\noindent A free random variable is an operator $X\in \mathcal{A}_{\tau}$ and it is said to be self-adjoint if it verifies $X^*=X$. For any such operator, we write its distribution as $\mu_{X}=\tau \circ E_X$ where $E_X$ is taken to be its spectral measure. 
 We define the $L_p$ space and norm of $\A_{\tau}$ in the following way: $$L_p(\mathcal{A},\tau):=\Big\{X\in \mathcal{A}_{\tau}\big|~\int_{-\infty}^{\infty}t^p d\mu_{|X|}(t)<\infty\Big\},\quad\rm{and}\quad \|X\|_p=\Big[\int_{-\infty}^{\infty}t^p d\mu_{|X|}(t)\Big]^{\frac{1}{p}}.$$  Finally we say that a sequence $(a_n)\in \mathcal{A}^{\mathbb{N}}_{\tau}$ converges almost everywhere (a.e) to $a\in \mathcal{A}_{\tau}$ if for all $\epsilon>0$ there is a projection of $\mathcal{H}$ that we denote $p_{\epsilon}$ such that (i) $\tau(p_{\epsilon})\ge 1-\epsilon$; and (ii) $\|(a_n-a)p_{\epsilon}\big\|_{{\infty}}\rightarrow 0$.

\vspace{5mm}

\noindent A non-commutative dynamical system is defined by a free random variable $X$ and a group of $*$-automorphims describing the evolution of $X$. Examples of dynamical systems include stationary fields or exchangeable sequences of free random variables.

More specifically, we choose $\mathbb{G}$ to be an amenable locally compact and second countable (l.c.s.c) group. We write $|\cdot|$ for a Haar measure on $\mathbb{G}$ and $(A_n)$ an amenable sequence of $\mathbb{G}$. Let $(K_g)$ be a net of *-automorphims from $\mathcal{A}_{\tau}$ into itself that satisfy \begin{enumerate}\item[$(H_1)$] For all $a\in L_1$ we have $\tau(K_g(a))=\tau(a)$\item[$(H_2)$]$K_g\circ K_{g'}=K_{g g'}$.
\end{enumerate} 
We say that $(K_g)$ defines a group action on $\mathcal{A}_{\tau}$. The invariant algebra is defined as $$\A^{\rm{inv}}_{\mathbb{G}}:=\{a| a\in \mathcal{A}_{\tau},~g\cdot a=a,~\forall g\in \mathbb{G}\}\subset \mathcal{A}_{\tau}.$$In the case where $\G=\Z$ it is often called tail-algebra. We define $E:\mathcal{A}_{\tau}\rightarrow  \A^{\rm{inv}}_{\mathbb{G}}$ to be the non-commutative conditional expectation on $\A^{\rm{inv}}_{\mathbb{G}}$.
~The ergodic theorem for dynamical systems \cite{conze1978ergodic} states that if $X\in L_1(\mathcal{A},\tau)$ and if $X_g=K_g(X)$ then empirical averages over $(X_g)$ converge: $$\frac{1}{|A_{n}|}\int_{A_n} X_g d|g|\xrightarrow{L_1} E(X).$$

\noindent Our goal is to study the speed of convergence of this result. In this paper we will alternate between defining the net of operators $(X_g)$ through the group action $(K_g)$; and  defining it through its distribution. The two approaches are equivalent.
 To see this, we first need to define  multivariate  distributions of free random variables. 
We denote by $\mathbb{C}\Big<x_1,\dots,x_k,x^*_1,\dots,x^*_k\Big>$ the algebra of non commutative *-polynomials in the formal random variables     $x_1,\dots,x_k$.
We say that  $(X_i)\in  \mathcal{A}^k$ has the same distribution that $(Y_i)\in \mathcal{A}^k$ if and only if  for all integers $k\in \mathbb{N}$ and for all polynomials $P\in \mathbb{C}\big<x_1,\dots,x_k,x^*_1,\dots,x_k^*\big>$ we have: $$\tau\big(P(X_{i_1},\dots,X_{i_k},X_{i_1}^*,\dots,X^*_{i_k})\big)=\tau\big(P(Y_{i_1},\dots,Y_{i_k},Y_{i_1}^*,\dots,Y_{i_k}^*)\big);$$
and we write $(X_i)\overset{d}{=}(Y_i)$.\\\noindent General elements of $\A_{\tau}$ do not necessarily have finite moments. We say that  $(X_i)\in  \mathcal{A}_{\tau}^k$ has the same distribution that $(Y_i)\in \mathcal{A}_{\tau}^k$ if there are sequences $(p_{i,n})$ and $(p^*_{i,n})$ of projectors of $\mathcal{H}$ such that:\begin{itemize}
     \item $\tau(1-p_{i,n}),\tau(1-p_{i,n}^*)\xrightarrow{n\rightarrow \infty} 0$ for all $i\in\mathbb{N}$;
     \item $(X_{i_1}p_{i_1,n},\dots,X_{i_k}p_{i_k,n})\in \mathcal{A}^k$,~ $(Y_{i_1}p_{i_1,n}^*,\dots,Y_{i_k}p_{i_k,n}^*)\in \mathcal{A}^k$ for all $n\in \mathbb{N};$
     \item $(X_{i_1}p_{i_1,n},\dots,X_{i_k}p_{i_k,n})\overset{d}{=}(Y_{i_1}p_{i_1,n}^*,\dots,Y_{i_k}p_{i_k,n}^*)$ for all $n\in \mathbb{N}$.
 \end{itemize}
Using those definitions, we prove that a process $(X_g)$ can be defined though a group action if its distribution is left-invariant.
\begin{prop}\label{eq_suis}Let $X\in L_1(\mathcal{A},\tau)$ be a self adjoint operator and $(K_g)$ be a set of *-automorphisms satisfying conditions $(H_1)$-$(H_2)$.  If we write $X_g=K_g(X)$, then the process $(X_g)$ satisfies $${(X_{g_1},\dots,X_{g_k})}\overset{d}{=}{(X_{gg_1},\dots,X_{gg_k})},\qquad \forall g,g_1,\dots g_k\in \mathbb{G}.$$
Conversely let $(Z_g)\in \mathcal{A}_{\tau}^{\mathbb{G}}$ be a sequence indexed by the group  $\mathbb{G}$. Denote $\B\subset \A_{\tau}$ the sub-algebra generated by $(Z_g)$. If for all $g,g_1,\dots g_k\in \mathbb{G}$ we have ${(Z_{g_1},\dots,Z_{g_k})}\overset{d}{=}{(Z_{gg_1},\dots,Z_{gg_k})}$, and if $g\rightarrow Z_{g}$ is continuous almost everywhere then there is a net $(K_g)$ of *-automorphisms of $\B$ that verifies conditions $(H_1)$-$(H_2)$ and is such that $K_g(Z_{g'})=Z_{gg'}$ for all $g,g'\in \mathbb{G}$. 
\end{prop}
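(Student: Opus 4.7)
The plan is to prove the two directions separately. The forward direction is a direct computation using the *-homomorphism and trace-invariance axioms; the converse requires constructing $(K_g)$ on generators, establishing well-definedness via faithfulness of $\tau$, and then extending continuously to the $\tau$-measurable closure of the generated sub-algebra.

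For the forward direction, I would argue as follows. Fix a *-polynomial $P \in \mathbb{C}\langle x_1,\ldots,x_k,x_1^*,\ldots,x_k^*\rangle$. Axiom $(H_2)$ yields $X_{gg_i} = K_g(X_{g_i})$, so because each $K_g$ is a *-homomorphism,
$$P(X_{gg_1},\ldots,X_{gg_k},X_{gg_1}^*,\ldots,X_{gg_k}^*) = K_g\bigl(P(X_{g_1},\ldots,X_{g_k},X_{g_1}^*,\ldots,X_{g_k}^*)\bigr).$$
Applying $\tau$ and invoking $(H_1)$ gives equality of the two multivariate moments. When $X$ is unbounded I would truncate using the spectral projection $p_n$ of $X$ onto $[-n,n]$ and set $p_{g_i,n} := K_{g_i}(p_n)$; these are projections of $\mathcal{H}$ with $\tau(1-p_{g_i,n})=\tau(1-p_n)\to 0$, the operators $X_{g_i}p_{g_i,n} = K_{g_i}(Xp_n)$ are bounded, and the computation above applies at the level of these truncations, matching the definition of $\overset{d}{=}$ for $\tau$-measurable operators.

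For the converse, I would first define $K_g$ on the *-algebra $\B_0 \subset \B$ of *-polynomials in $(Z_{g'})$ by the rule $K_g\bigl(P(Z_{g_1},\ldots,Z_{g_k}^*)\bigr) := P(Z_{gg_1},\ldots,Z_{gg_k}^*)$. The crucial step is well-definedness: if $R := P-Q$ is a *-polynomial with $R(Z_{g_1},\ldots)=0$ in $\B_0$, then $\tau(RR^*(Z_{g_1},\ldots))=0$, and by the distributional invariance hypothesis this trace remains zero after translating indices by $g$; faithfulness of $\tau$ then forces $R(Z_{gg_1},\ldots)=0$. Given well-definedness, the identities $K_g\circ K_{g'}=K_{gg'}$ and $K_e=\mathrm{Id}$ hold on generators and hence on $\B_0$, so each $K_g$ is a *-automorphism of $\B_0$ with inverse $K_{g^{-1}}$; condition $(H_1)$ on $\B_0$ is exactly the distributional invariance hypothesis applied to the single polynomial $P$.

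The remaining task is to extend $K_g$ from $\B_0$ to $\B$. Trace invariance on $\B_0$ gives $\|K_g(a)\|_2 = \|a\|_2$, so $K_g$ extends isometrically to the $L_2$-completion, and from there to the $\tau$-measurable closure since the measure topology on $\A_\tau$ is controlled by $\tau$ on spectral projections. The a.e.\ continuity of $g\mapsto Z_g$ ensures the maps $K_g$ depend measurably on $g$, which is needed for $(K_g)$ to realize a group action of the l.c.s.c.\ group $\G$. The hard part of the argument is this extension step: the algebraic construction on polynomials is routine once well-definedness is established, but verifying that the extended $K_g$ is a *-automorphism of $\B$ (rather than merely a trace-preserving isometry into a possibly larger algebra) requires combining the bicommutant theorem with the completeness of the measure topology on $\A_\tau$, and it is here that the l.c.s.c.\ and continuity hypotheses are genuinely used.
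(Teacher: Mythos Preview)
Your forward direction matches the paper's proof essentially line for line, including the truncation device for the unbounded case.

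For the converse, your route and the paper's diverge. The paper does not build $K_{g^*}$ by hand on a polynomial algebra: instead it fixes a countable dense subset $\{g_i\}\subset\G$, invokes the definition of distributional equality via truncating projectors to obtain bounded tuples with matching moments, and then cites Theorem~4.10 of Nica--Speicher to produce a $*$-automorphism $K_{g^*}$ of $\B$ sending $Z_{g_i}p_{g_i}^n$ to $Z_{g^*g_i}p_{g^*g_i}^n$; a.e.\ continuity of $K_{g^*}$ is then used to pass from the truncations to $K_{g^*}(Z_{g_i})=Z_{g^*g_i}$, and density of $\{g_i\}$ together with a.e.\ continuity of $g\mapsto Z_g$ extends this identity to all $g$. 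Your construction via well-definedness on $\B_0$ and faithfulness of $\tau$ is more elementary and avoids the black-box citation, but note two points of friction. First, for genuinely unbounded $Z_g$ the hypothesis ${(Z_{g_1},\dots)}\overset{d}{=}{(Z_{gg_1},\dots)}$ is stated in the paper only through truncating projectors, so the identity $\tau(RR^*(Z_{g_1},\dots))=\tau(RR^*(Z_{gg_1},\dots))$ you invoke is not literally available without first reducing to the bounded case; the paper's truncation-first approach sidesteps this. Second, the paper uses the a.e.\ continuity hypothesis to pass from a countable dense set of indices to all of $\G$, whereas you invoke it only for measurability of $g\mapsto K_g$, which the proposition does not actually demand; your construction defines $K_g$ for every $g$ directly, so you do not need that extension step, but you should be aware the hypothesis is doing different work in the two arguments.
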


\vspace{1mm}

\noindent In this paper, we establish conditions under which $S_n:=\frac{1}{|A_n|}\int_{A_n}X_gd|g|$ is asymptotically semi-circular.
Those are defined through free mixing coefficients that quantify how far $(X_g)$ is from being freely independent. 
In classical probability, the dependence of a  stationary sequence $(Z_i)$ is quantified through strong-mixing coefficients, alternatively called $\alpha$-mixing coefficients.  They are defined as $$\alpha(i):=\sup_{A\in \sigma(Z_{-\infty:0})}\sup_{B\in \sigma(Z_{i,\infty})} \Big|P(A,B)-P(A)P(B)\Big|,$$ where  $\sigma(Z_{-\infty:0})$ and $\sigma(Z_{i,\infty})$ designate the sigma-fields of events generated by the observations $\dots, X_{-1},X_0$ and respectively by the observations $ X_{i},X_{i+1},\dots$ The faster $\alpha(i)$ decreases as a function of $i$  the weaker the dependence between the observations $(Z_i)$ are. Central-limit theorems have been extended to dependent sequences by enforcing conditions on the strong mixing coefficients \cite{bolthausen1982central}. This notion has been generalized to general dynamical systems $(Z_g)$ \cite{austern2018limit} by choosing a metric on the underlying group and upper-bounding the correlations between events depending on $\{Z_{g_1}, Z_{g_2}\}$ and events depending on $\{Z_g,~g\in \tilde G\}$ for a subset $\tilde G$ ``far away " from $g_1$ and $g_2.$ 
The free mixing coefficients we define bear much resemblance with those strong-mixing coefficients.

\noindent Let $d:\G\times \G\rightarrow \mathbb{R}_+$ be a left invariant distance on $\G$ i.e satisfying $d(g_1,g_2)=d(g g_1,g g_2)$, for all $g_1,g_2\in \mathbb{G}.$ We note that such a distance function always exists on l.c.s.c amenable groups. We denote $\bar{d}$ the Hausdroff metric generated by $d$ on subsets of $\G$, write $B(g,b)$ the induced ball of radius $b$ around an element $g\in \G$ and shorthand $B_b:=B(e,b)$. 
We write

\begin{equation*}\begin{split}\label{chol}
\mathcal{C}[b]:=\Big\{(\tilde G_1,\tilde G_2)\big|~ \tilde G_1,\tilde G_2\subset \G~\rm{s.t}~\bar{d}(\tilde G_1,\tilde G_2)\ge b\Big\}
.\end{split}\end{equation*} 
 For all subset $\tilde G$ we define $\mathcal{F}(X_{\tilde G})\subset \mathcal{A}_{\tau}$ the unital subalgebra of $\tau$-measurable operators generated by $\{X_{g}; g\in \tilde G\}\bigcup \A^{\rm{inv}}_{\mathbb{G}}$; and write $\mathcal{F}_0(X_{\tilde G})$ to denote
 the following subset 
 $$\mathcal{F}_0(X_{\tilde{G}}):=\{Y|Y\in {\mathcal{F}(X_{\tilde G})},~E(Y)=0,~\&~\|Y\|_{{\infty}}\le 1\}.$$
\noindent Finally we let $X_{g}^{\mathcal{N}}:= \frac{X_g-E(X_g)}{\|X_e\|_2}$ and define the \emph{free mixing coefficients} of $(X_g)$ as $\aleph[b|\G]=(\aleph^j[b|\G], \aleph^s[b|\G])$ where  we have
{\begin{equation}\begin{split}&\label{suis_suisse}\aleph^j[b|\G]:=\hspace{-2mm}\sup_{{\big(\{g,g'\},\tilde G\big)\in \mathcal{C}[b]}}~\sup_{\substack{Y_{1},Y_2\in \mathcal{F}_0(X_{\tilde G})}}\tau\Big[\Big|E\Big(X_{g}^{\mathcal{N}} ~Y_1~X^{\mathcal{N}}_{g'} ~Y_2 \Big)\Big|\Big];
\\&\aleph^s[b|\G]:=\hspace{-2mm}\sup_{{\big(\{g\},\tilde G\cup \{g'\}\big)\in \mathcal{C}[b]}}~\sup_{\substack{Y_{1},Y_2\in \mathcal{F}_0(X_{\tilde G})}}\tau\Big[\Big|E\Big(X^{\mathcal{N}}_{g} ~Y_1~X^{\mathcal{N}}_{g'} ~Y_2 \Big)\Big|\Big].\end{split}\end{equation}} 
We remark that, as $\A^{\rm{inv}}_{\mathbb{G}}$ is not necessarily trivial, the mixing coefficients $(\aleph[b|\G])$ can be vanishing without the process $(X_g)$ being ergodic. In general $S_n$ might be converging to a semi-circular law with a radius being an arbitrary positive operator in $\A^{\rm{inv}}_{\mathbb{G}}$.  We call a $\A^{\rm{inv}}_{\mathbb{G}}$-valued variance a completely positive map from $\A$ into $\A^{\rm{inv}}_{\mathbb{G}}$.  We define the $\A^{\rm{inv}}_{\mathbb{G}}$-valued Stieljes transform of a self adjoint free random variable $Y$ by: $$S_Y:\gamma \rightarrow E\Big(\Big[Y-\gamma \mathbf{1}_{\mathcal{A}}\Big]^{-1}\Big).$$We say that a  self-adjoint free random variable $Y$ follows an operator valued semi-circular law with radius $\eta$ if its Stiejles transform satisfies: \begin{equation}\begin{split}&\eta\big(S_Y(\gamma)\big)S_Y(\gamma)+\gamma S_Y(\gamma)+\mathbf{1}_{\mathcal{A}}=0,\qquad\forall \gamma \in \mathbb{C}\setminus\mathbb{R}
\\&S_Y(\gamma)\sim -\frac{1}{\gamma}\mathbf{1}_{\mathcal{A}},\qquad \rm{as}~\gamma\rightarrow\infty.
\end{split}\end{equation} 
\subsection{Main result for empirical averages.}
 Let $(\mathcal{A}_n)$ be a sequence of Von Neumann algebras with normal faithful trace $(\tau_n)$. Define $(\mathbb{G}_n)$ to be a sequence of l.c.s.c amenable groups and $(K_g^n)$ to be $*$-automorphisms of $\mathcal{A}_{n,\tau}$ respecting conditions $(H_1)$-$(H_2)$. We denote $E_n$ the conditional expectation on $\mathcal{A}^{\rm{inv}}_{\G_n}$, and write $(A_{i,n})$ for the F\"{o}lner sequence of $\G_n$.
 
 \noindent Let $(X^n)$ be a sequence of free self-adjoint random variables, we denote $X^n_g:=K_g^n( X^n)$ and write $(\aleph_n[b|\mathbb{G}_n])$ for the free mixing coefficients of $(X^n_g)$.  We denote by $\hat{E}_n$ the spectral measure of $$W_n:=\frac{1}{\sqrt{|A_{n,n}|}}\int_{A_{n,n}}X^n_g d|g|;$$ and write $\hat{\mu}_n=E_n\circ \hat{E}_n$.
~We define $\eta_n$ as the following completely positive map $$\eta_n:a\rightarrow \int_{A_{n,n}}E_n(X_e^n a  X^n_g)d|g|.$$ Finally we write $S_n(\cdot)$ the operator-valued Stieljes transform of $W_n$, and $S_n^{sc}(\cdot)$ the Stiejles transform of the semi-circular operator $Y^{sc,\eta_n}$ with radius $\eta_n(\cdot)$.
\begin{theorem}\label{theorem_2} Let $(X^n)$ be a triangular array of free random variables satisfying \\${E_n(X^n)=0}$ as well as $X^n\in L_3(\mathcal{A},\tau)$. Denote by $(\aleph_n[b|\G_n])$ the free mixing coefficients of $(X^n_g)$; suppose that $$\int_{A_{n,n}}\aleph^s_n[d(e,g)|\mathbb{G}_n] d|g|<\infty.$$
\noindent Let $\gamma^{x,\nu}=x+i\nu\in \mathbb{C}\setminus\mathbb{R} $ be a complex number and $(b_n)$ be a sequence of integers. There is a constant { $K=O(\Big[\Big\| {X_{e}^n}\Big\|_{3}^3+\|Y^{sc,\eta}\|_3^3\Big]\vee 1\Big)$} not depending of $X^n$, $\G_n$, $n$ or $\gamma^{x,\nu}$ such that 

\begin{equation*}\begin{split}&\Big\|S_n(\gamma^{x,\nu})-S_n^{sc}(\gamma^{x,\nu})\Big\|_{1}
\\&\le K\Big[ \frac{1 }{\nu^3}\Big(\mathcal{R}^s_{n}[b_n]+\frac{|A_n\triangle B_{b_n}A_n|}{|A_{n,n}|}+|B_{b_n}|\aleph^j_n[b_n|\mathbb{G}_n]\Big)
+\frac{|B_{2b_n}|^2}{\sqrt{|A_{n,n}|}\nu^4}\Big]\longrightarrow 0.
\end{split}\end{equation*}
where $\mathcal{R}^s_n[b]:=\sum_{k\ge b }\Big|\Big[B_{k+1}\setminus B_k\Big]\bigcap A_{n,n}\Big|~\aleph^s_n[k|\mathbb{G}_n]$.
\end{theorem}

\begin{remark}
We note that the Berry-Esseen bound depends on how fast the free mixing coefficients $(\aleph_n[b|\mathbb{G}_n])$ decrease as a function of $b$. Notably if $\aleph_n[b|\G_n]=0$ for all $b>0$ then we obtain \begin{equation*}\begin{split}&\Big\|S_n(\gamma^{x,\nu})-S_n^{sc}(\gamma^{x,\nu})\Big\|_{1}
=O\Big(\frac{1}{\sqrt{|A_{n,n}|}\nu^4}
\Big).
\end{split}\end{equation*}
Note that in the case where $\G_n=\mathbb{Z}$, and $(X_i^n)$ are freely independent then the upper-bound we obtain is of order $O(\frac{1}{\sqrt{n}\nu^4})$. \cite{kargin2007berry} shows that the optimal rate is indeed of order  $\frac{1}{\sqrt{n}}$; but the term $\nu^{-4}$ is greater than what could be obtained by analytical methods (e.g see \cite{chistyakov2008limit,kargin2007berry}). However, those rely strongly on the fact that the following equation holds when $X$ and $Y$ are freely independent: $S_{X+Y}^{-1}(z)=S_{X}^{-1}(z)+S_{Y}^{-1}(z)+z^{-1}.$ Those methods are not adaptable for free mixing processes $(X_i)$.
\end{remark}
\subsection{Examples}
\noindent In this section, we explore a few illustrative examples  of dynamical systems $(X_g)$. We bound their free mixing coefficients and deduce that their asymptotic distribution is semi-circular.
\subsubsection{Examples of general operators}
\noindent Firstly we note that if $(X_i)$ is a freely independent sequence then its free mixing coefficients are null. We also observe this under slightly weaker conditions. Indeed \cite{kargin2007proof} proved a free central limit theorem under the assumption that $(X_i)$ respect the following conditions:
\begin{itemize}
    \item [-$H'_1$.]$\tau\big(X_k Y_1\big)=\tau\big(\big[X_k^2-\tau(X_k^2)\big]Y_1)=0$\quad for all $Y_1\in F(X_{\mathbb{N}\setminus \{k\}})$
    \item [-$H'_2$.]$\tau\big(X_kY_1X_k Y_2)=\tau(X_k^2)~\tau(Y_1)\tau(Y_2)$ \quad for all $Y_1,Y_2\in F(X_{\mathbb{N}\setminus \{k\}})$.
\end{itemize}
We prove that any process satisfying those conditions has null free mixing coefficients.
\begin{prop}
Let $(X_i)$ be a sequence of identically distributed self-adjoint free random variables satisfying conditions $H'_1$ and $H'_2$. Let $(\aleph[\cdot|\mathbb{Z}])$ denote the free mixing coefficients of $(X_i)$ then we have:
$$\aleph[b|\mathbb{Z}]=0,\qquad \forall b\ge 1, \qquad \A^{\rm{inv}}_{\Z}=\mathbb{C}\mathbf{1}_{\A}.$$
Therefore $\frac{1}{\sqrt{n}}\sum_{i\le n} \big[X_i-\tau(X_i)\big]$ is asymptotically semi-circular with radius $\tau(X_1^2).$
\end{prop}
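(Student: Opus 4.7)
The plan is to prove three claims in sequence: (a) $\A^{\rm{inv}}_\Z=\mathbb{C}\mathbf{1}_\A$, so that $E=\tau(\cdot)\mathbf{1}_\A$; (b) both $\aleph^j[b|\Z]$ and $\aleph^s[b|\Z]$ vanish for every $b\ge 1$; and (c) the scalar semi-circular limit of radius $\tau(X_1^2)$ follows from \cref{theorem_1}. A preliminary remark: taking $Y_1=\mathbf{1}_\A$ in $H'_1$ yields $\tau(X_k)=0$, so $X_g^{\mathcal{N}}=X_g/\|X_e\|_2$.

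Granting (a), claim (b) is short. Fix $b\ge 1$ and $(\{g,g'\},\tilde G)\in \mathcal{C}[b]$ for $\aleph^j$, or $(\{g\},\tilde G\cup\{g'\})\in \mathcal{C}[b]$ for $\aleph^s$; in either case $g\ne g'$ and $g\notin \tilde G$. For $Y_1,Y_2\in \mathcal{F}_0(X_{\tilde G})$, the product $Y_1 X_{g'} Y_2$ lies in $\mathcal{F}(X_{\tilde G\cup\{g'\}})\subset F(X_{\Z\setminus\{g\}})$, so $H'_1$ applied to $X_g$ with this product as its test operator gives $\tau(X_g Y_1 X_{g'} Y_2)=0$. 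Since $E$ is scalar-valued, $\tau[|E(X_g^{\mathcal{N}} Y_1 X_{g'}^{\mathcal{N}} Y_2)|]=0$, and both coefficients vanish.

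For (a), I would show that $\tau(aY)=\tau(a)\tau(Y)$ for every $a\in \A^{\rm{inv}}_\Z$ and every monomial $Y=X_{i_1}\cdots X_{i_l}$; density of polynomials in $L_2(\A,\tau)$ then makes $a-\tau(a)\mathbf{1}_\A$ orthogonal to a dense set, hence zero. Because $a\in \mathcal{F}(X_{\Z\setminus T})$ for every finite $T$, we can always place $a$ in an algebra disjoint from whichever $X_{i_j}$ we wish to peel off. I induct on $l$: for $l=1$, trace cyclicity plus $H'_1$ gives $\tau(a X_{i_1})=0=\tau(a)\tau(X_{i_1})$. For $l\ge 2$, locate a letter $X_{i_j}$ in $Y$; if it appears exactly once, cycle $a$ adjacent to it and apply $H'_1$ to shorten $Y$; if it appears in a sandwich pattern $X_{i_j}ZX_{i_j}$ with $Z$ free of $X_{i_j}$, apply $H'_2$ and invoke the inductive hypothesis on the shorter monomials produced.

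With (a) and (b) in hand, \cref{theorem_1} applies. The variance map collapses to $\eta_n(\mathbf{1}_\A)=\int_{A_n}\tau(X_e X_g)\,d|g|\cdot\mathbf{1}_\A=\tau(X_e^2)\mathbf{1}_\A$, since $H'_1$ kills every $g\ne e$ term; as the $\A^{\rm{tail}}$-valued Stieltjes transform takes values in $\mathbb{C}\mathbf{1}_\A$, this identifies the limit as the scalar semi-circular of radius $\tau(X_1^2)$. With $\aleph_n=0$, the Berry-Esseen bound reduces to $O(n^{-1/2}\nu^{-4})$. The main obstacle will be the induction in (a) when a letter appears three or more times in $Y$ with intervening factors that themselves involve that letter: $H'_2$ handles only the clean sandwich pattern, and the general case will require either a finer combinatorial unrolling or an approximation argument exploiting $a\in \bigcap_N \mathcal{F}(X_{|n|>N})$ to justify the additional peel-offs.
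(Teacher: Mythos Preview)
The paper's appendix does not contain a proof of this proposition, so there is nothing to compare against directly; I assess your sketch on its own merits.

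Your plan (a)$\Rightarrow$(b)$\Rightarrow$(c) is more circuitous than needed, and has one small error. In the $\aleph^j$ case nothing in the definition of $\mathcal{C}[b]$ forbids $g=g'$, so your claim ``in either case $g\neq g'$'' is wrong; when $g=g'$ you must invoke $H'_2$ together with $\tau(Y_1)=\tau(E(Y_1))=0$.

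More importantly, you do not need (a) to obtain (b). Since $E$ is the conditional expectation onto the tail, $E(X_g Y_1 X_{g'} Y_2)=0$ is equivalent to $\tau(a\,X_g Y_1 X_{g'} Y_2)=0$ for every $a\in\A^{\rm{tail}}_\Z$. Cycle $X_g$ to the front; because the tail sits inside every $\A_{\Z\setminus T}$, the remaining factor $Y_1 X_{g'} Y_2 a$ lies in $F(X_{\Z\setminus\{g\}})$, and $H'_1$ gives the vanishing when $g\neq g'$, $H'_2$ when $g=g'$. (If $H'_1,H'_2$ are stated only on the $*$-algebra, extend by $L_2$-continuity using boundedness of $X_g$.) The same test-against-the-tail computation shows $E(X_e a X_g)=0$ for $g\neq e$ and $E(X_e a X_e)=\tau(X_e^2)\,\tau(a)\,\mathbf{1}_\A$ for $a\in\A^{\rm{tail}}_\Z$, so $\eta_n$ is automatically scalar on its effective domain and the operator-valued limit in \cref{theorem_1} collapses to the scalar semi-circular of radius $\tau(X_1^2)$ --- without ever knowing whether the tail is trivial.

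That leaves the separate assertion $\A^{\rm{inv}}_\Z=\mathbb{C}\mathbf{1}_\A$. The obstacle you flag in (a) is genuine: $H'_1,H'_2$ control only the first two occurrences of each letter, so your moment induction really does stall at monomials where some letter has multiplicity $\geq 3$; ``finer combinatorial unrolling'' will not help, because the hypotheses simply carry no information about $\tau(X_k^3\,\cdot\,)$. Neither Kargin's argument nor the application above needs this claim, and since the paper supplies no proof for it either, treat it as an additional assertion requiring independent justification rather than something your sketch is expected to deliver.
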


\noindent Another important example is quantum exchangeable processes: those are processes whose distribution is invariant under the coaction of quantum permutations.
~These processes play a role in free probability analogous to that played by exchangeable sequences in classical probability. Indeed,  
\cite{kostler2009a} proved that $(X_i)$ is quantum exchangeable if and only if  conditionally on its tail algebra it is freely independent and identically distributed. 
This notably implies that quantum exchangeable sequences are exchangeable but the reverse does not hold. 
\begin{prop}
Choose $\mathbb{G}=\mathbb{S}(\mathbb{N})$ and let $(X_i)\in \mathcal{A}^{\mathbb{N}}$ be a quantum exchangeable sequence of free self-adjoint random variables. We have $$\aleph[b|\mathbb{S}(\mathbb{N})]=0,\qquad \forall b>0.$$
This implies that $\frac{1}{\sqrt{n}}\sum_{i\le n} \big[X_i-E(X_i)\big]$ is asymptotically semi-circular with radius $\eta: a\rightarrow E(X_1a X_1).$
\end{prop}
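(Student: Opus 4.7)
The plan is to invoke the K\"{o}stler--Speicher characterization of quantum exchangeability, which states that $(X_i)$ is quantum exchangeable if and only if, conditionally on $\A^{\rm{tail}}_{\mathbb{S}(\mathbb{N})}$, the variables $(X_i)$ are identically distributed and freely independent with amalgamation. Given this, the vanishing of the free mixing coefficients reduces to the vanishing of mixed operator-valued free cumulants.

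Fix $b > 0$. For $\aleph^s[b|\mathbb{S}(\mathbb{N})]$, the condition $(\{g\}, \tilde G \cup \{g'\}) \in \mathcal{C}[b]$ forces $g \notin \tilde G \cup \{g'\}$, so by K\"{o}stler--Speicher the subalgebra generated by $X_g$ is free over $\A^{\rm{tail}}_{\mathbb{S}(\mathbb{N})}$ from $\mathcal{F}(X_{\tilde G \cup \{g'\}})$, which contains $Y_1 X_{g'}^{\mathcal{N}} Y_2$. The vanishing of the mixed operator-valued cumulant $\kappa^E_2$ then yields
\begin{equation*}
E\!\left(X_g^{\mathcal{N}}\, Y_1\, X_{g'}^{\mathcal{N}}\, Y_2\right) = E(X_g^{\mathcal{N}})\, E(Y_1 X_{g'}^{\mathcal{N}} Y_2) = 0,
\end{equation*}
since $E(X_g^{\mathcal{N}}) = 0$. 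Hence $\aleph^s[b|\mathbb{S}(\mathbb{N})] = 0$.

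For $\aleph^j[b|\mathbb{S}(\mathbb{N})]$, the condition forces $g, g' \notin \tilde G$. Expanding by the operator-valued moment-cumulant formula,
\begin{equation*}
E\!\left(X_g^{\mathcal{N}} Y_1 X_{g'}^{\mathcal{N}} Y_2\right) = \sum_{\pi} \kappa^E_\pi\!\left(X_g^{\mathcal{N}}, Y_1, X_{g'}^{\mathcal{N}}, Y_2\right),
\end{equation*}
where $\pi$ ranges over non-crossing partitions of $\{1,2,3,4\}$, only those $\pi$ whose blocks each lie within a single free subalgebra contribute. Inspecting the finitely many admissible partitions: when $g \neq g'$, positions $1$ and $3$ lie in distinct free algebras and must therefore be singletons, leaving only $\{1\}\{2\}\{3\}\{4\}$ and $\{1\}\{2,4\}\{3\}$; when $g = g'$, the admissible partitions are $\{1\}\{2\}\{3\}\{4\}$, $\{1,3\}\{2\}\{4\}$, and $\{1\}\{2,4\}\{3\}$. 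Each of these contains at least one singleton on a centered element---either an $X$-singleton contributing $\kappa^E_1(X^{\mathcal{N}}) = E(X^{\mathcal{N}}) = 0$, or a $Y$-singleton contributing $\kappa^E_1(Y_i) = E(Y_i) = 0$---so the nested cumulant collapses to zero. Therefore $\aleph^j[b|\mathbb{S}(\mathbb{N})] = 0$.

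The identification of the asymptotic radius $\eta(a) = E(X_1 a X_1)$ then follows from \cref{theorem_1}: for $a \in \A^{\rm{tail}}_{\mathbb{S}(\mathbb{N})}$ and $g \neq 1$, conditional freeness over the tail gives $E(X_1 a X_g) = E(X_1)\, a\, E(X_g) = 0$, collapsing the integral defining $\eta_n$ to the diagonal contribution, which evaluates to $E(X_1 a X_1)$ by identical distribution. The main subtlety is a clean invocation of the operator-valued moment-cumulant machinery for the $\tau$-measurable operators $Y_1, Y_2 \in \mathcal{F}_0(X_{\tilde G})$, but the norm bound $\|Y_i\|_\infty \leq 1$ places them squarely inside the K\"{o}stler--Speicher framework, where freeness and the cumulant calculus apply directly.
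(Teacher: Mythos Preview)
Your proof is correct. The paper does not include an explicit proof of this proposition, but it cites K\"{o}stler--Speicher immediately before the statement, so your route through conditional freeness over the tail is exactly the intended one.

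One simplification worth noting: the moment--cumulant expansion for $\aleph^j$ is more machinery than needed. All four factors $X_g^{\mathcal{N}}, Y_1, X_{g'}^{\mathcal{N}}, Y_2$ are $E$-centered, and consecutive factors lie in distinct subalgebras that are free over $\A^{\rm{tail}}_{\mathbb{S}(\mathbb{N})}$ (the algebra of $X_g$, then $\mathcal{F}(X_{\tilde G})$, then the algebra of $X_{g'}$, then $\mathcal{F}(X_{\tilde G})$ again). The very definition of operator-valued freeness with amalgamation then gives $E\big(X_g^{\mathcal{N}} Y_1 X_{g'}^{\mathcal{N}} Y_2\big)=0$ immediately, with no partition enumeration required. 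Your cumulant argument is of course valid, just longer.

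For comparison, the paper's proof of the analogous U-statistic statement (\cref{prop_800}) handles the $\aleph^{*,s}$ coefficient the same way you handle $\aleph^s$, but treats the $\aleph^{*,j}$ coefficient differently: it works directly with the magic-unitary defining relations of $A_s(k)$ and an averaging trick, rather than invoking K\"{o}stler--Speicher. Your argument via conditional freeness is cleaner and more conceptual, and it transfers without change to the present single-index setting.
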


\subsubsection{Examples of random matrices}
\noindent An important class of examples are random matrices. Let $\mathcal{A}_m:=\mathbb{L}_{\infty}\otimes \mathcal{M}_m(\mathbb{C})$ be the set of random matrices with entries admitting infinite moments. It forms a Von-Neumann algebra with $\tau_m(\cdot)=\frac{1}{m}\mathbb{E}(Tr(\cdot))$ as the normal faithful trace. In this subsection we provide examples of free mixing coefficients for dynamical systems of random matrices.

\noindent Let $A,B\in \mathcal{A}_m$ be two independent random matrices in the Gaussian Unitary Ensemble $GUE(m)$. It is well known that $A$ and $B$ are asymptotically free \cite{anderson2014asymptotically}. One might wonder if  two random matrices in $GUE(m)$ that are ``almost independent"  are also ``almost asymptotically free".
In the following proposition, we study stationary random fields of random matrices and upper-bound their free mixing coefficients in terms of their stochastic dependence. We say that $(X_{\z}^m)_{\z\in \Z^d}$ forms a stationary random field if its distribution is invariant under the following action of $\Z^d$: $\z'\cdot (X_{\z}^m):= (X_{\z+\z'}).$ Its dependence is measured through the following strong mixing coefficients: $$\alpha^d(i):= \sup_{(\{\z_1,\z_2\},\tilde Z)\in \mathcal{C}[i]}\quad\sup_{\substack{A\in \sigma(X^m_{\z_1},X^m_{\z_2})\\~\\B\in \sigma(X^m_{\tilde Z})}}\Big|P(A,B)-P(A) P(B)\Big|.$$
An important example in this class of processes is given by $X^{m}_{\z}:=Y_{\z,m}{Y_{\z,m}^T}$ where $(Y_{\z,m})$ is a stationary random-field of  m-dimensional vectors.
\begin{prop}\label{good}Let $(X^{\z,m})_{\z\in \Z^d}$ be a stationary random field of random matrices of size $m\times m$  whose distribution is invariant under unitary conjugation: $$\big(X^{\z_1,m},\dots,X^{\z_d,m}\big)\overset{d}{=}\big(UX^{\z_1,m}U^*,\dots,UX^{\z_d,m}U^*\big)\qquad \forall U\in U_m.$$ We define $(\alpha_{m}^d(i))$ the mixing coefficients of $(X^{\z,m})$  and $(\aleph_m[i])$ the free mixing coefficients of $(\frac{X^{\z,m}}{\sqrt{m}})$. Suppose that there is $\epsilon>0$ such that $\sup_m\frac{\sup_{i,j}\|X^{1,m}_{i,j}\|_{2+\epsilon}^2}{\|X^{1,m}/\sqrt{m}\|_2^2}<\infty$. Then, there is a constant $C$ that does not depend on $m$ or $b$ such that 
$$\aleph^j_m[b|\mathbb{Z}]\le C\Big[\alpha^d_m[b]^{\frac{\epsilon}{2+\epsilon}}+ \frac{1}{m}\Big], \qquad \aleph^s_m[b|{\mathbb{Z}}]\le  C\alpha^d_m[b]^{\frac{\epsilon}{2+\epsilon}}.$$
 Moreover if $\sum_{b}m^{d-1}\alpha^d_m[b]^{\frac{\epsilon}{2+\epsilon}}<\infty$ then the following holds $$\frac{1}{m^{\frac{d}{2}}\sqrt{\sum_{\z \in \Z^d}\tau_m(X^{\mathbf{1},m}X^{\z,m})}}\sum_{\z\in\dbracket{m}^d} X^{\z,m}\xrightarrow{m\rightarrow \infty}Y_{sc}$$ where $Y_{sc}$ is  semi-circular with radius $1$
.\end{prop}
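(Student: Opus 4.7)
The plan is to first bound the free mixing coefficients $\aleph^j_m$, $\aleph^s_m$ in terms of the classical mixing $\alpha^d_m$, and then invoke \cref{theorem_1}. Since the field $(X^{\z,m})$ is classically strongly mixing, its tail $\sigma$-algebra is trivial; the unitary invariance then forces the noncommutative tail algebra $\A^{\rm{tail}}_{\Z^d}$ to reduce to scalars, so the conditional expectation $E$ coincides with $\tau_m$ and $\tau_m[|E(A)|] = |\tau_m(A)|$. Writing $Z^{\z,m} := X^{\z,m}/\sqrt{m}$, it then suffices to bound $|\tau_m(Z_{\z_1}^{\mathcal{N}} Y_1 Z_{\z_2}^{\mathcal{N}} Y_2)|$ for $Y_1,Y_2\in \mathcal{F}_0(X_{\tilde Z})$. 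Expanding this trace as $\frac{1}{m}\sum_{i,j}\mathbb{E}[(Z_{\z_1}^{\mathcal{N}})_{i,j}(Y_1 Z_{\z_2}^{\mathcal{N}} Y_2)_{j,i}]$ and applying the Davydov--Rio covariance inequality entry-wise, with $X_{\z_1}$ separated by $\alpha^d_m[b]$ from the remaining variables (or the pair $(X_{\z_1},X_{\z_2})$ from $X_{\tilde Z}$ for $\aleph^j$), yields
\begin{equation*}
\bigl|\tau_m(Z_{\z_1}^{\mathcal{N}} Y_1 Z_{\z_2}^{\mathcal{N}} Y_2) - \tau_m(\widetilde Z_{\z_1}^{\mathcal{N}} Y_1 \widetilde Z_{\z_2}^{\mathcal{N}} Y_2)\bigr| = O\bigl(\alpha^d_m[b]^{\epsilon/(2+\epsilon)}\bigr),
\end{equation*}
where $(\widetilde Z_{\z_1},\widetilde Z_{\z_2})$ is an independent copy of $(Z_{\z_1},Z_{\z_2})$ classically independent of $(Y_1,Y_2)$; the implicit constant uses the moment hypothesis $\sup_{i,j}\|X^{\mathbf{1},m}_{i,j}\|_{2+\epsilon}^2/\|X^{\mathbf{1},m}/\sqrt{m}\|_2^2 = O(1)$ and $\|Y_i\|_\infty\le 1$.

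\smallskip
Next, by unitary invariance $(\widetilde Z_{\z_1},\widetilde Z_{\z_2}) \stackrel{d}{=} (V\widetilde Z_{\z_1} V^*, V\widetilde Z_{\z_2} V^*)$ for a Haar unitary $V\in U(m)$ independent of everything; I average over $V$. For $\aleph^s$ only $Z_{\z_1}$ is conjugated, and the identity $\mathbb{E}_V[V A V^*] = \tau'_m(A) I_m$ (with $\tau'_m(\cdot)=m^{-1}\mathrm{Tr}(\cdot)$) reduces the main term to $\mathbb{E}[\tau'_m(\widetilde Z_{\z_1}^{\mathcal{N}})]\cdot \mathbb{E}[\tau'_m(Y_1 Z_{\z_2}^{\mathcal{N}} Y_2)] = 0$ by centering and independence, so $\aleph^s_m[b|\mathbb{Z}] \lesssim \alpha^d_m[b]^{\epsilon/(2+\epsilon)}$. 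For $\aleph^j$ both matrices are conjugated by the same $V$, and the four-point Weingarten formula gives
\begin{equation*}
\mathbb{E}_V\bigl[\tau'_m(V A V^* Y_1 V C V^* Y_2)\bigr] = \tfrac{m^2}{m^2-1}\bigl[\tau'_m(A)\tau'_m(C)\tau'_m(Y_1 Y_2) + \tau'_m(AC)\tau'_m(Y_1)\tau'_m(Y_2)\bigr] - \tfrac{m}{m^2-1}\bigl[\tau'_m(AC)\tau'_m(Y_1 Y_2) + \tau'_m(A)\tau'_m(C)\tau'_m(Y_1)\tau'_m(Y_2)\bigr],
\end{equation*}
with $A=\widetilde Z_{\z_1}^{\mathcal{N}}$, $C=\widetilde Z_{\z_2}^{\mathcal{N}}$. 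Taking outer expectations and using the centering $\tau_m(\widetilde Z_{\z_i}^{\mathcal{N}}) = \tau_m(Y_i) = 0$ together with the independence of $(\widetilde Z_{\z_1},\widetilde Z_{\z_2})$ from $(Y_1,Y_2)$, each of the four terms vanishes at leading order; the residual contributions come either from the prefactor $m/(m^2-1) = O(1/m)$ times the $O(1)$ traces $\tau_m(\widetilde Z_{\z_1}^{\mathcal{N}}\widetilde Z_{\z_2}^{\mathcal{N}})\tau_m(Y_1 Y_2)$, or from fluctuations of $\tau'_m$ around $\tau_m$ (variance $O(1/m)$ by concentration for unitarily invariant matrices), yielding $\aleph^j_m[b|\mathbb{Z}] \lesssim \alpha^d_m[b]^{\epsilon/(2+\epsilon)} + 1/m$.

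\smallskip
To conclude the semi-circular limit I apply \cref{theorem_1} with $\G_n = \Z^d$, shift action, and F\"olner sequence $A_n = \dbracket{m}^d$, so $|A_{n,n}|=m^d$. The $L_3$ moment hypothesis on $Z^{\z,m}$ follows from the $(2+\epsilon)$-entry moments. The mixing integral $\int_{A_{n,n}}\aleph^s_m[d(e,g)|\Z^d]\,d|g| \lesssim \sum_{b\ge 0} b^{d-1}\alpha^d_m[b]^{\epsilon/(2+\epsilon)}$ is finite by the hypothesis $\sum_b m^{d-1}\alpha^d_m[b]^{\epsilon/(2+\epsilon)}<\infty$. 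Choosing $b_n$ growing slowly in $m$ (e.g.\ $b_n = m^{1/(4d)}$), the F\"olner boundary term $|A_n \triangle B_{b_n}A_n|/|A_{n,n}| = O(b_n/m)$, the contribution $|B_{b_n}|\aleph^j_m[b_n|\Z^d] = O(b_n^d/m)$, and the residual $|B_{2b_n}|^2/\sqrt{|A_{n,n}|}$ all vanish as $m\to\infty$. The variance map reduces to the scalar $\eta_m(a) = \sum_{\z\in\dbracket{m}^d}\tau_m(X^{\mathbf{1},m} a X^{\z,m})/m$, and the rescaling $1/\sqrt{\sum_\z \tau_m(X^{\mathbf{1}} X^\z)}$ in the statement brings the semi-circular radius to $1$. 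The main obstacle is the four-point Weingarten step for $\aleph^j$: verifying that the leading Weingarten terms all vanish up to $O(1/m)$ requires a careful accounting of the fluctuations of $\tau'_m(\widetilde Z_{\z_i}^{\mathcal{N}})$ and $\tau'_m(Y_i)$ around $0$, and the entry-wise $(2+\epsilon)$ control of the sandwiched product $Y_1 Z_{\z_2}^{\mathcal{N}} Y_2$ must use the almost-sure operator-norm bound on $Y_i$ to avoid accumulating extra factors of $m$ when summing over matrix indices.
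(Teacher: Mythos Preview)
Your proposal is correct and follows essentially the same two–step strategy as the paper: first decouple $(X_{\z},X_{\z'})$ from $(Y_1,Y_2)$ via a Davydov–type covariance inequality (this is exactly \cref{chanson} in the paper) to produce the $\alpha^d_m[b]^{\epsilon/(2+\epsilon)}$ error, and then exploit unitary invariance together with Haar conjugation to bound the decoupled term $\tau_m(\tilde X^{\z}Y_1\tilde X^{\z'}Y_2)$ by $C/m$. The only methodological difference is in the second step: the paper inserts two independent Haar unitaries and invokes the asymptotic liberating property of Haar unitaries \cite{anderson2014asymptotically} as a black box, whereas you carry out an explicit four–point Weingarten computation with a single Haar unitary. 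These are equivalent routes (the second Haar unitary in the paper's argument cancels by the unitary invariance of $(Y_1,Y_2)$, reducing to your single–unitary picture), and the ``obstacle'' you flag---controlling the fluctuations of $\tau'_m(\tilde Z_{\z_i})$ and $\tau'_m(Y_i)$ around their zero means---is precisely what the quantitative liberation estimate in Anderson's paper absorbs. Your Weingarten coefficients are slightly off (the two terms in the second bracket should carry different prefactors, $\tfrac{1}{m^2-1}$ and $\tfrac{m^2}{m^2-1}$, rather than a common $\tfrac{m}{m^2-1}$), but this does not affect the $O(1/m)$ conclusion. The final application of \cref{theorem_1} to deduce the semicircular limit is exactly what the paper intends.
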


\noindent The second example we present concerns random matrices with correlated entries. Contrarily to what was assumed in \cref{good} we do not require them to be unitary-invariant.

\begin{prop}\label{good_2}Let $(X^{i,m})$ be a sequence of independent and identically distributed self-adjoint random matrices of size $m\times m$ with centered entries: $\mathbb{E}(X^{1,m}_{i,j})=0$.  We define $(\aleph_m[i])$ the free mixing coefficients of $(\frac{1}{\sqrt{m}}X^{i,m})$. Suppose that the entries $(X^{1,m}_{i,j})$ admit a second moment. Denote $\Sigma_{X^{1,m}}$ the following bilinear operator: $$\Sigma_{X^{1,m}}:A\times A'\rightarrow\sum_{i,j,k,l\le n}A_{l,j}A'_{k,i}\rm{cov}\Big(X^{1,n}_{i,l},X^{1,n}_{j,k}\Big).$$ Then we have
$$\aleph^s_m[b|{\mathbb{Z}}]=0,\qquad \aleph^j_m[b|{\mathbb{Z}}]\le \frac{\|\Sigma_{X^{1,m}}\|_{\rm{op}}}{m^2\|X^{1,m}/\sqrt{m}\|^2_2},\quad \forall b\ge 1,$$ where $\|\cdot\|_{\rm{op}}$ designates the operator norm.
Therefore if $\frac{\|\Sigma_{X^{1,m}}\|_{\rm{op}}}{m^2\|X^{1,m}/\sqrt{m}\|^2_2}\rightarrow 0$   then\\ $\frac{1}{\sqrt{m}\sqrt{\tau_m({X^{1,m}}^2)}}\sum_{i\le m}X^{i,m}$ is asymptotically semi-circular with radius $1$.
\end{prop}
\begin{remark}We remark that \cref{good_2} can also be used to study the eigenvalue distribution of a single matrix with dependent entries. Indeed, let $X$ be a random gaussian matrix of size $m\times m$ whose entries are not assumed to be independent but are instead arbitrarily correlated. We remark that if $X^{1,m},\dots,X^{m,m}$ are independent copies of $X$ then we have: $\frac{1}{\sqrt{m}}\sum_{i\le m}X^{i,m}\overset{d}{=}X.$ Therefore if $(X^{1,m})$ respect the conditions of \cref{good_2} we obtain that the limiting eigenvalue distribution of $X$ is semicircular.
\end{remark}\begin{remark}

The sequence $(X^{i,m})$ does not need to be independent. Let $(\alpha^m[\cdot])$ denote its alpha-mixing coefficients; and suppose that the entries of $X^{1,m}$ admit a $2+\epsilon$ moment. Then its free mixing coefficients satisfy

\begin{equation*}\begin{split}&
\aleph^s_m[b|{\mathbb{Z}}]\le \frac{\sup_{i,j}\|X^{1,m}_{i,j}\|_{2+\epsilon}^2}{\|X^{1,m}/\sqrt{m}\|^2_2}\alpha_m[b]^{\frac{\epsilon}{2+\epsilon}}\\&
    \aleph^j_m[b|{\mathbb{Z}}]\le \frac{\sup_{i,j}\|X^{1,m}_{i,j}\|_{2+\epsilon}^2}{\|X^{1,m}/\sqrt{m}\|^2_2}\alpha_m[b]^{\frac{\epsilon}{2+\epsilon}}+\frac{\|\Sigma_{X^{1,m}}\|_{\rm{opt}}}{m^2\|X^{1,m}/\sqrt{m}\|^2_2}.
\end{split}\end{equation*}
\end{remark}

\vspace{1mm}

\noindent Another important example is the class of jointly exchangeable arrays. Let $X:=(X_{i,j})$ be a random array, we say that it is jointly exchangeable if for all permutations $\pi\in \mathbb{S}(\mathbb{N})$ we have: $X\overset{d}{=}(X_{\pi(i),\pi(j)}).$  We write  $\mathbb{E}(\cdot|\mathbb{S}(\mathbb{N}))$ for the conditional expectation knowing $\sigma(\mathbb{S}(\mathbb{N}))$, the invariant sigma-field generated by events $A$ satisfying $\mathbb{I}(X\in A)=\mathbb{I}((X_{\pi(i),\pi(j)})\in A)$ for all permutations $\pi\in \mathbb{S}(\mathbb{N}).$ 

\noindent We say that a random matrix $Y$ of size $n\times n$ is jointly exchangeable if there is a random exchangeable array $X$ such that: $Y:=(X_{i,j})_{i,j\le n}.$ An important example  are adjacency matrices of exchangeable graphs \cite{kallenberg2006probabilistic}. We define $\mathcal{M}^{\rm{inv}}$ to be the algebra generated by random invariant arrays: $\{a|(a_{i,j})=(a_{\pi(i),\pi(j)})~\forall \pi\in \mathbb{S}(\mathbb{N})\}$, and write $\mathcal{M}^{\rm{inv}}_n:=\{y\in L_{\infty}\otimes \mathcal{M}_n(\mathbb{C})\big|~\exists a\in \A^{\rm{inv}}~ \rm{s.t}~y=(a_{i,j})_{i,j\le n} \}$.
\begin{prop}\label{good_3}Let $(X^{i,m})$ be a sequence of $\sigma(\mathbb{S}(N))$-conditionally independent and identically distributed random matrices of size $m\times m$. We assume that they are self-adjoint and have  centered entries: $\mathbb{E}(X^{1,n}_{i,j}|\mathbb{S}(\mathbb{N}))=0$. Suppose that $X^{1,m}$ is jointly exchangeable and that the entries $(X^{1,m}_{i,j})$ admit a second moment. We define to be $(\aleph_m[i|\mathbb{S}(\mathbb{N})])$ the free mixing coefficients of $(\frac{1}{\sqrt{m}}X^{i,m})$. Suppose that the diagonals are not randomly dominant meaning that is  $\sup_m\frac{\|X^{1,m}_{1,1}\|}{\|X^{1,m}_{1,2}\|_2}<\infty$. Then there is a constant $C$ independent of $m$ such that 
$$\aleph^s_m[b|\mathbb{S}(\mathbb{N})]=0,\qquad \aleph^j_m[b|\mathbb{S}(\mathbb{N})]\le  \frac{C}{m},\quad \forall b\ge1.$$
 We denote $\eta_n$ the following mapping $\eta_n:a\rightarrow E_{m}(X^{1,m}a X^{1,m})$ then the distribution of $\frac{1}{\sqrt{n}}\sum_{i\le n} X^{i,n}$ converges to  a semi-circular distribution with radius $\eta_n$.
\end{prop}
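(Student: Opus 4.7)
The plan is to identify the tail algebra, control the two free mixing coefficients via conditional independence of the matrices $(X^{i,m})_i$, and then invoke Theorem~\ref{theorem_1} applied to the dynamical system $K_{\pi}(X^{i,m})=X^{\pi(i),m}$. The first step is to identify $\A^{\rm{tail}}_{\G}$: by a non-commutative Hewitt-Savage argument for the conditionally i.i.d.\ sequence, the tail $\sigma$-field coincides with $\sigma(\mathbb{S}(\mathbb{N}))$, so $\A^{\rm{tail}}_{\G}$ is the algebra of $m\times m$ random matrices whose entries are $\sigma(\mathbb{S}(\mathbb{N}))$-measurable; the conditional expectation $E$ then acts entry-wise and the centering hypothesis yields $E(X^{g,m})=0$, so $X_g^{\mathcal{N}}=X^{g,m}/\|X^{1,m}\|_2$.

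For $\aleph^s_m$, any pair $(\{g\},\tilde G\cup\{g'\})\in\mathcal{C}[b]$ with $b\geq 1$ has $g\notin\tilde G\cup\{g'\}$, so $X^{g,m}$ is conditionally independent of $X^{g',m}$ and of every $Y_i\in\mathcal{F}_0(X_{\tilde G})$ given $\mathcal{I}:=\sigma(\mathbb{S}(\mathbb{N}))$. The entry-wise expansion of $E(X_g^{\mathcal{N}} Y_1 X_{g'}^{\mathcal{N}} Y_2)$ then factors, with each summand carrying the vanishing factor $\mathbb{E}[(X_g^{\mathcal{N}})_{p,r}\mid\mathcal{I}]=0$, giving $\aleph^s_m[b\mid\mathbb{S}(\mathbb{N})]=0$.

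For $\aleph^j_m$, the constraint $(\{g,g'\},\tilde G)\in\mathcal{C}[b]$ permits the degenerate case $g=g'$, which drives the $O(1/m)$ bound (the case $g\neq g'$ reduces to zero by the same factorization). When $g=g'$, conditional independence of $X^{g,m}$ from $Y_1,Y_2$ yields
\begin{equation*}
E(X_g^{\mathcal{N}} Y_1 X_g^{\mathcal{N}} Y_2)_{p,q}=\frac{1}{\|X^{1,m}\|_2^2}\sum_{r,s,t}\mathbb{E}\bigl[(X^{g,m})_{p,r}(X^{g,m})_{s,t}\mid\mathcal{I}\bigr]\,\mathbb{E}\bigl[(Y_1)_{r,s}(Y_2)_{t,q}\mid\mathcal{I}\bigr].
\end{equation*}
By the Aldous-Hoover representation of the jointly exchangeable array $X^{1,m}$, the conditional covariance on the right vanishes unless $\{p,r\}\cap\{s,t\}\neq\emptyset$ (with no overlap the expectation factors into two marginals, each integrating to zero by conditional centering), restricting the nonzero $(r,s,t)$ summands to $O(m^2)$ out of $m^3$. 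Combined with the normalization $\|X^{1,m}\|_2^2\asymp m\|X^{1,m}_{1,2}\|_2^2$ (guaranteed by the non-dominance assumption on the diagonal) and the entry bound $|(Y_i)_{r,s}|\leq\|Y_i\|_\infty\leq 1$, a trace-normalized computation yields $\tau_m[|E(X_g^{\mathcal{N}} Y_1 X_g^{\mathcal{N}} Y_2)|]\leq C/m$.

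Finally, since $\int\aleph^s_m\,d|g|=0$ trivially and $\aleph^j_m=O(1/m)$, choosing $b_n=1$ so that $|B_{b_n}|=O(1)$ reduces the Berry-Esseen bound of Theorem~\ref{theorem_1} (with $m=n$) to $O(1/n+1/(\sqrt{n}\nu^4))$, which vanishes as $n\to\infty$. The variance map $\eta_n(a)=E(X^{1,m}aX^{1,m})$ emerges from the averaging formula in Theorem~\ref{theorem_1}, because the cross terms $E(X^{i,m}aX^{j,m})$ with $i\neq j$ vanish by conditional independence and centering. The main obstacle is the combinatorial analysis in the $g=g'$ case of $\aleph^j_m$: establishing the $1/m$ gain requires careful bookkeeping of the Aldous-Hoover covariance patterns and sharpening the naive Jensen bound $\|E(M)\|_1\leq\|M\|_2\leq 1$ via the trace structure; a secondary technical point is verifying the $L_3$ hypothesis of Theorem~\ref{theorem_1} from the assumed second moment of entries, which follows under the non-dominance hypothesis by combining the Aldous-Hoover structure with standard operator-norm bounds for exchangeable matrices.
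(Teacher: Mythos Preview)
Your treatment of $\aleph^s_m$ and the overall strategy (identify the tail algebra, use conditional independence, then invoke Theorem~\ref{theorem_1}) matches the paper. The problem is in your bound for $\aleph^j_m$ in the case $g=g'$: the ingredients you list are not enough to reach $C/m$.

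Concretely, after factoring out $X^{g,m}$ by conditional independence and applying Aldous--Hoover to $X$, you correctly reduce the sum over $(r,s,t)$ to the $O(m^2)$ terms with $\{p,r\}\cap\{s,t\}\neq\emptyset$. But then you only invoke $|(Y_i)_{r,s}|\le 1$. With that alone, each surviving summand is bounded by $\sup_{i,j}\|X^{1,m}_{i,j}\|_2^2$, so the $(p,p)$-entry of $E(X^{g,m}Y_1X^{g,m}Y_2)$ is at most $Cm^2\sup_{i,j}\|X_{i,j}\|_2^2$. Dividing by $\|X^{1,m}\|_2^2\asymp m\|X_{1,2}\|_2^2$ leaves a bound of order $m$, not $1/m$; at best you recover the trivial estimate $\tau_m[|E(X^{\mathcal N}_g Y_1 X^{\mathcal N}_g Y_2)|]\le 1$. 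The phrase ``a trace-normalized computation yields $C/m$'' hides a factor of $m^2$ that you have not accounted for. To see that this is not an artifact of crude bookkeeping, consider the subsum with $s=r$ (so $j=l$ in the paper's indexing): the summand carries $\mathbb{E}[X_{p,r}X_{r,t}\mid\mathcal I]$, which for distinct $p,r,t$ equals $\mathbb{E}[\,\mathbb{E}[X_{p,r}\mid\mathcal I,\xi_r]^2\,]$ and is generically nonzero under Aldous--Hoover, so $\Theta(m^2)$ terms of size $\Theta(1)$ genuinely survive.

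What the paper does, and what you are missing, is a \emph{second} Aldous--Hoover reduction on the $Y$ side. Since $Y_1,Y_2\in\mathcal{F}_0(X_{\tilde G})$ are built from the jointly exchangeable matrices $(X^{k,m})_{k\in\tilde G}$ together with tail elements, they are themselves jointly exchangeable; hence $\mathbb{E}[(Y_1)_{a,b}(Y_2)_{c,d}\mid\mathcal I]=0$ whenever $\{a,b\}\cap\{c,d\}=\emptyset$, and $E(Y_i)=0$ gives $\mathbb{E}[(Y_i)_{a,b}\mid\mathcal I]=0$. Applying this to the factor $\mathbb{E}[(Y_1)_{r,s}(Y_2)_{t,p}\mid\mathcal I]$ cuts the surviving index set from $O(m^2)$ down to $O(m)$, and together with the row/column $\ell^2$ bound $\sum_r|(Y_i)_{r,p}|\le\sqrt m$ (from $\|Y_i\|_\infty\le 1$) this is exactly what produces the extra factor of $1/m$. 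Without exploiting the exchangeable structure of $Y_1,Y_2$, the argument does not close.
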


\section{Generalization to U-statistics}
\subsection{Notations and definitions}
In this section we consider U-statistics and under free mixing conditions prove that their limiting distributions are semi-circular. In classical probability, U-statistics are a key quantity, and their limiting distribution is well known to be normal see e.g \cite{van2000asymptotic}. An example is $s_n:=\sum_{i,j\le n}h(Y_i,Y_j)$ where $(Y_i)$ is a stationary process and   $h$ is a measurable function. We remark that it can be redefined as an empirical average over a jointly invariant random array. Indeed if we define $Z:=(Z_{i,j})$ as $Z_{i,j}=h(Y_i,Y_j)$ then we have that $s_n$ can be re-expressed as $s_n=\frac{1}{n^2}\sum_{\z\in [n]^2} Z_{\z}$.
~We note that the distribution of $Z$ is not invariant under the action of ~$(k_1,k_2)\cdot Z:=(Z_{i+k_1,j+k_2})$. However, it is invariant under the induced joint action (or diagonal action): $(k,k)\cdot Z$ for all $k\in \mathbb{Z}$. This is the definition that we propose to extend to the non-commutative setting.

\noindent Let $k\in \mathbb{N}$ be an integer, we denote by $\g:=(\g_1,\dots,\g_k)$ elements of $\G^k$ and by $\mathcal{D}(\G^k):=\{\g|\g\in \G^k,~\g_{i}=\g_{j}~\forall i,j \le k\}$ the ``diagonal" of $\G^k$.
Let $(X_{\g})_{\g\in\G^k}$ be a net of elements of $\A_{\tau}$ indexed by $\G^k$. We say that it is jointly invariant with respect to $\G^k$ if it satisfies: 
\begin{itemize} \item [$(H_3)$] For all  $\g\in \mathcal{D}(\G)$ and all  $\g_1,\dots,\g_k\in \G^k$, we have $$(X_{\g_1},\dots,X_{\g_k})\overset{d}{=} (X_{\g\g_1},\dots,X_{\g\g_k}).$$
\end{itemize}
For all subsets $\tilde G\subset \mathbb{G}^k$ we define $\mathcal{F}^k(X_{\tilde G})\subset \mathcal{A}_{\tau}$ as the subalgebra of $\tau$-measurable operators generated by $\{X_{\g}; \g\in \tilde G\}\bigcup\{\mathbf{1}_{\mathcal{\B}}\}$. For $\g\in \G^k$ we denote the tail algebra of $(X_{\g'\g})_{\g'\in \mathcal{D}(\G^k)}$ as
 $$\A^{\rm{tail}}_{\g}:= \bigcap_{i\in \mathbb{N}}\mathcal{F}^k(X_{\mathcal{D}(\G^k\setminus A_i^k)\g }).$$  The invariant sub-algebra $\A^{\rm{inv}}_{\mathcal{D}(\G^k)}$ is the smallest algebra generated by $\bigcup_{\g\in \mathbb{G}^k} \A_{\g}^{\rm{tail}}$; and  we set $E_{\mathcal{D}}(\cdot)$ to be the non-commutative conditional expectation on $\A^{\rm{inv}}_{\mathcal{D}(\G^k)}$. Note that when $k=1$ then we have $\A^{\rm{inv}}_{\mathcal{D}(\G^k)}=\A^{\rm{inv}}_{\G}$.
We call the following quantity a U-statistics $$S_n:=\frac{1}{|A_n|^k}\int_{A_n^k}X_{\g}-E_{\mathcal{D}}(X_{\g}) d|\g|.$$ 
The following proposition proves that the classical notion of U-statistics can be embedded into this framework.
\begin{prop}\label{luna}
Let $\Phi:\A^k\rightarrow \A$ be a continuous function.
Choose $\G$ to be a l.c.s.c group defining an action $(K'_g)$ on $\A$. Let $Y\in \A$ be self adjoint, we write $Y_g:=K'_g(Y)$ and denote $X_{\g}:= \Phi(Y_{\g_1},\dots,Y_{\g_k})$ for all $\g\in \G^k$. 
 \\\noindent The net $(X_{\g})$ is jointly invariant with respect to $\G^k$ and  the following is a U-statistics $$\frac{1}{|A_n^k|}\int_{A_n^k}\Phi(Y_{\g_1},\dots,Y_{\g_{k}})d|\g|=\frac{1}{|A_n^k|}\int_{A_n^k}X_{\g} d|\g|.$$  
\end{prop}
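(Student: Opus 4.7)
The plan is to unpack the definition of joint invariance $(H_3)$ and reduce it to the $*$-automorphism property of $(K'_g)$. An element $\g \in \mathcal{D}(\G^k)$ has the form $\g = (g,g,\dots,g)$ for a single $g \in \G$, so componentwise multiplication gives $\g \g_i = (g(\g_i)_1,\dots,g(\g_i)_k)$ for any $\g_i \in \G^k$. Applying $(H_2)$ to $Y$ in the form $Y_{gh} = K'_g(Y_h)$, one obtains
\begin{equation*}
X_{\g\g_i} \;=\; \Phi\bigl(Y_{g(\g_i)_1},\dots,Y_{g(\g_i)_k}\bigr) \;=\; \Phi\bigl(K'_g(Y_{(\g_i)_1}),\dots,K'_g(Y_{(\g_i)_k})\bigr).
\end{equation*}

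The next step is to commute $K'_g$ past $\Phi$, i.e.\ to establish the key identity $X_{\g\g_i} = K'_g(X_{\g_i})$. When $\Phi$ is a non-commutative $*$-polynomial this is automatic because $K'_g$ is a unital $*$-homomorphism; for a general continuous $\Phi$ one extends by approximation. The $*$-subalgebra of $\A$ generated by the finitely many elements $Y_{(\g_i)_1},\dots,Y_{(\g_i)_k}$ is preserved by $K'_g$ (it acts directly on generators), and since $Y \in \A$ is bounded, $\Phi$ evaluated at these generators can be approached through polynomial expressions which $K'_g$ intertwines; continuity of both $\Phi$ and $K'_g$ in the ambient topology then yields $K'_g \circ \Phi = \Phi \circ (K'_g,\dots,K'_g)$ on the relevant inputs.

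With the identity $X_{\g\g_i} = K'_g(X_{\g_i})$ established, joint invariance $(H_3)$ follows immediately. For any non-commutative $*$-polynomial $P$ in $k$ variables,
\begin{equation*}
\tau\bigl(P(X_{\g\g_1},\dots,X_{\g\g_k})\bigr) \;=\; \tau\bigl(K'_g\bigl(P(X_{\g_1},\dots,X_{\g_k})\bigr)\bigr) \;=\; \tau\bigl(P(X_{\g_1},\dots,X_{\g_k})\bigr),
\end{equation*}
where the first equality uses that $K'_g$ is a $*$-homomorphism and the second uses the trace-invariance $(H_1)$. This is exactly $(X_{\g_1},\dots,X_{\g_k}) \overset{d}{=} (X_{\g\g_1},\dots,X_{\g\g_k})$. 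The second assertion, that the displayed integral coincides with a U-statistic in the new sense, is then a tautology from the definition $X_{\g} := \Phi(Y_{\g_1},\dots,Y_{\g_k})$ together with Fubini on $\G^k$.

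The main obstacle is making rigorous the intertwining $K'_g \circ \Phi = \Phi \circ (K'_g,\dots,K'_g)$ for a general continuous $\Phi$: one must specify the topology in which $\Phi$ is continuous so that $K'_g$ is continuous in the same topology and non-commutative $*$-polynomials in the given $Y_g$'s form a dense subset of the subalgebra in which $\Phi$ lands. For $Y \in \A$ and norm-continuous $\Phi$ this is essentially immediate, and more general settings can be handled by the normality of $K'_g$ combined with a standard density argument (e.g.\ Kaplansky). Once that step is in place, the remainder of the argument is formal verification.
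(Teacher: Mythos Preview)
Your proposal is correct and follows essentially the same route as the paper: reduce to the polynomial case where the intertwining $K'_g\circ\Phi=\Phi\circ(K'_g,\dots,K'_g)$ is immediate from the $*$-automorphism property, then pass to general continuous $\Phi$ by polynomial approximation (the paper invokes Stone--Weierstrass explicitly). The only cosmetic difference is that you argue via the operator identity $X_{\g\g_i}=K'_g(X_{\g_i})$ and then apply $(H_1)$, whereas the paper works directly at the level of joint distributions of the underlying tuples $Z_{\g}=(Y_{\g_1},\dots,Y_{\g_k})$; the content is the same.
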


\noindent To prove that the limit of $S_n$ is semi-circular, we define a new notion of free-mixing and explain how it relates to the previous one. 
In classical probability, if a stationary process $(X_i)$ is strongly-mixing then $h(X_{i},X_{j})$ and $h(X_{k},X_l)$ become increasingly independent as the  distance between $\{i,j\}$ and $\{k,l\}$ grows to infinity.  We generalize this notion to the non-commutative setting. Let ${d}_k(\cdot,\cdot)$  be the pseudo distance on $\G^k$ defined as: $${d}_k(\g,\g'):=\min_{i,j\le k}d(\g_i,\g'_j).$$ We denote $\overline{B}_{k}(\g, b)$ the induced ball of radius $b$ around $\g$ and $\bar{d}_k(\cdot,\cdot)$ the induced Hausdroff pseudo-distance. 
We write 
\begin{equation*}\begin{split}&\mathcal{C}_{k}[b]:=\big\{(\tilde G_1,\tilde{G}_2)\big|\tilde G_1,\tilde G_2\subset \G^k,~~\bar{d}_k\big(\tilde G_1, \tilde G_2\big) \ge b\big\}.
\end{split}\end{equation*} 
Finally for a process $(X_{\g})$ we use the shorthand  $$X^{\mathcal{N}}_{\g}:=\frac{X_{\g}-E_{\mathcal{D}}(X_{\g})}{\|X_{\g}\|_2}.$$
~ We define the \emph{free global mixing coefficients} of $(X_{\g})$ as $\aleph^{*}[\cdot|\G^k]:=(\aleph^{*,j}[\cdot|\G^k], \aleph^{*,s}[\cdot|\G^k])$ where we have:
 {\begin{equation*}\begin{split}&\label{suis_suisse_2}\aleph^{*,j}[b|\G^k]:=\sup_{\substack{\big(\{\g,\g'\},\tilde G\big)\in \mathcal{C}_{k}[b]}}\quad\sup_{{Y_{1},Y_2\in \mathcal{F}_0(X_{\tilde G})}}\tau\Big[\Big|E_{\mathcal{D}}\Big(X^{\mathcal{N}}_{\g} ~Y_1~X^{\mathcal{N}}_{\g'}~Y_2 \Big)\Big|\Big]
 \\&\aleph^{*,s}[b|\G^k]:=\sup_{\substack{\big(\{\g\},\tilde G\cup \{\g'\}\big)\in \mathcal{C}_{k}[b]}}\sup_{{Y_{1},Y_2\in \mathcal{F}_0(X_{\tilde G})}}\tau\Big[\Big|E_{\mathcal{D}}\Big(X^{\mathcal{N}}_{\g} ~Y_1~X^{\mathcal{N}}_{\g'}~Y_2 \Big)\Big|\Big];
\end{split}\end{equation*}} 
\noindent where we have set $\mathcal{F}_0^k(X_{\tilde{G}}):=\{Y|Y\in {\mathcal{F}^k(X_{\tilde G})},~E_{\mathcal{D}}(Y)=0,~\&~\|Y\|_{{\infty}}\le 1\}.$ We observe that $(\aleph^{*}[\cdot|\G^k])$ is very similar to the free mixing coefficients defined in \cref{suis_suisse} where the group $\G$ has been replaced by $\G^k$ and where the metric has been replaced by a pseudo distance. 
Indeed when $k=1$ then the two type of free mixing coefficients concur $$\aleph^*[\cdot|\G]=\aleph[\cdot|\G]$$

\subsection{Main results for U-statistics}
Let $(\mathcal{A}_n)$ be a sequence of Von Neumann algebras with normal faithful trace $(\tau_n)$. Define $(\mathbb{G}_n)$ to be a sequence of l.c.s.c amenable groups. We denote $E_{\mathcal{D}_n}$ the non-commutative conditional expectation on $\A^{\rm{inv}}_{\mathcal{D}(\G_n^{k_n})}$. Write  $(A_{i,n})$ a F\"{o}lner sequence of $\G_n$. Let $(k_n)$ be a sequence of integers. For all $n\in \mathbb{N}$, let $(X^n_g)_{\g\in \G_n^{k_n}}$ be a sequence of  free self-adjoint random variables; that we take to be  jointly invariant with respect to $\G_n^{k_n}$. Write $(\aleph^{*}_n[b|\mathbb{G}^{k_n}_n])$ its free global mixing coefficients.
~We define $\eta_n$ to be the following completely positive map $$\eta_n:a\rightarrow \int_{A_{n,n}^{2k_n}}E_{\mathcal{D}_n}\Big(X^n_{\g}a X^n_{\g'}\Big)d|\g|d|\g'|.$$ 
 We denote by $\hat{E}_n$ the spectral measure of {$W_n:=\frac{1}{{|A_{n,n}|}^{k_n-\frac{1}{2}}}\int_{A_{n,n}^{k_n}}X^n_{\g}d|\g|$} and define $\hat{\mu}_n=E_{\mathcal{D}_n}\circ \hat{E}_n$. We prove that $\hat{\mu}_n$ converges in probability to a semi-circular law.
 Finally we write $S_n(\cdot)$ the operator-valued Stieljes transform of $W_n$, and $S_n^{sc}(\cdot)$ the Stiejles transform of the semi-circular operator $Y^{sc,\eta_n}$ with radius $\eta_n(\cdot)$.
\begin{theorem}\label{theorem_3}  Let $(k_n)$ be a sequence of integers; and let $(X^n_{\g})$  be a triangular array of self-adjoint free random variables that are jointly invariant with respect  to $\G_n^{k_n}$. Suppose that \begin{itemize}\item[i.] $X^n_{\g}\in L_3(\A_{\tau}^n,\tau)$ and  $E_{\mathcal{D}_n}(X^n_{\g})=0$ for all $\g\in \G^{k_n}_n$ \item[ii.] $\sum_{b\ge 0}\Big|\Big[B_{k+1}\setminus B_k\Big]\bigcap A_{n,n}\Big|~\aleph^{*,s}_n[b|\mathbb{G}^{k_n}_n] <\infty$
\end{itemize}  Then  there is a constant {$K=O(\Big[\sup_{\g\in \G^{k_n}}\Big\| {X_{\g}^n}\Big\|_{3}^3+\|Y^{sc,\eta}\|_3^3\Big]\vee 1\Big)$} such that for all sequences $(b_n)$ the following holds

\begin{equation*}\begin{split}&\Big\|S_n(\gamma^{x,\nu})-S_n^{sc}(\gamma^{x,\nu})\Big\|_{1}
\le K\Big[\frac{k_n^4}{\sqrt{|A_{n,n}|}\nu^4}
+\frac{k_n^2 \Big[\mathcal{R}^s_{n}[b_n]+|B_{b_n}|\aleph^{*j}_n[b_n|\G^{k_n}_n]\Big]}{\nu^3}
\Big]\longrightarrow 0.
\end{split}\end{equation*}
where $\mathcal{R}^s_n[b]:=\sum_{k\ge b }\Big|\Big[B_{k+1}\setminus B_k\Big]\bigcap A_{n,n}\Big|~\aleph^{*,s }_n[k|\mathbb{G}^{k_n}_n]$.
\end{theorem}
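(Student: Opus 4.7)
My plan is to adapt the resolvent-expansion proof of \cref{theorem_2} to the diagonal-invariant joint framework. Since $Y^{sc,\eta_n}$ is characterized by the fixed-point equation $\eta_n(S_n^{sc}(\gamma)) S_n^{sc}(\gamma) + \gamma S_n^{sc}(\gamma) + I = 0$, it suffices to establish the approximate identity
$$\eta_n(S_n(\gamma)) S_n(\gamma) + \gamma S_n(\gamma) + I = \mathcal{E}_n(\gamma),$$
with $\|\mathcal{E}_n(\gamma)\|_1$ bounded by the right-hand side of the theorem, and then invert the (Lipschitz) map $T \mapsto \eta_n(T) T + \gamma T + I$ in a neighborhood of $S_n^{sc}(\gamma)$; in the upper half-plane this inversion is stable and picks up one extra power of $\nu^{-1}$. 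The resolvent identity applied to $W_n$, with $G_n(\gamma) := (W_n - \gamma)^{-1}$, yields
$$\gamma S_n(\gamma) + I = \frac{1}{|A_{n,n}|^{k_n - 1/2}} \int_{A_{n,n}^{k_n}} E_{\mathcal{D}_n}\bigl(G_n(\gamma) X^n_\g\bigr) d|\g|,$$
so the task reduces to matching this right-hand side with $-\eta_n(S_n(\gamma)) S_n(\gamma)$.

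For each fixed $\g \in A_{n,n}^{k_n}$ I would split $W_n = W_n^{(\g, b_n)} + R_\g$, placing into $R_\g$ the contribution of indices $\g'$ with $d_{k_n}(\g, \g') \le b_n$, and expand the resolvent as
$$G_n = G_n^{(\g, b_n)} - G_n^{(\g, b_n)} R_\g G_n^{(\g, b_n)} + G_n\bigl(R_\g G_n^{(\g, b_n)}\bigr)^2.$$
Substituting this back: the zeroth-order term vanishes upon integration, modulo an error controlled by $\aleph^{*,s}$ that quantifies how far $G_n^{(\g, b_n)}$ is from commuting with $X_\g^n$; the linear term generates, after a second integration over $\g' \notin \overline{B}_{k_n}(\g, b_n)$, the required semi-circular contribution $-\eta_n(S_n) S_n$ up to mixing residues; and the quadratic remainder gives rise to the $k_n^4 / (\sqrt{|A_{n,n}|}\,\nu^4)$ variance-type term via the $L_3$ moment hypothesis and standard tracial inequalities, exactly as in \cref{theorem_2}.

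The error bookkeeping then splits into two regimes. For pairs $(\g, \g')$ with $d_{k_n}(\g, \g') \le b_n$, the pseudo-ball inside $A_{n,n}^{k_n}$ has measure at most $k_n^2 |B_{b_n}| |A_{n,n}|^{k_n - 1}$, since each of the $k_n$ coordinates of $\g'$ must land within $b_n$ of some coordinate of $\g$; combined with the joint mixing bound $\aleph^{*,j}$ applied to each such pair, this produces the $k_n^2 |B_{b_n}| \aleph^{*,j}_n[b_n | \G^{k_n}_n]$ summand. For pairs at pseudo-distance larger than $b_n$, a layer-cake summation over the annuli $B_{k+1} \setminus B_k$ controlled by $\aleph^{*,s}$ produces $\mathcal{R}^s_n[b_n]$, again with the $k_n^2$ combinatorial prefactor.

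The main obstacle is the combinatorial behavior of the pseudo-distance $d_k$ on $\G^{k_n}$. Because $d_k(\g, \g')$ records only the closest pair of coordinates, pseudo-balls are considerably larger than genuine balls, and the near/far decomposition must be performed at the coordinate level rather than the tuple level. One must also be careful when $\g$ and $\g'$ share coordinates exactly on $\mathcal{D}(\G^{k_n})$, since the tail algebra $\A^{\rm{tail}}_{\mathcal{D}(\G^{k_n})}$ and the conditional expectation $E_{\mathcal{D}_n}$ interact non-trivially with shared indices: such configurations must be absorbed into the $\eta_n$ term rather than into the mixing errors. Propagating the combinatorial factor $k_n^2$ through two applications of the resolvent expansion is what produces the $k_n^2$ and $k_n^4$ powers in the stated bound, and ensuring these powers do not grow further during the second-order analysis is the most delicate accounting step.
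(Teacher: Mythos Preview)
Your resolvent expansion and the combinatorial bookkeeping are essentially the same as what the paper carries out, and your volume estimate $|\overline{B}_{k_n}(\g,b)|\le k_n^2|B_b||A_{n,n}|^{k_n-1}$ is exactly the source of the $k_n^2$ and $k_n^4$ prefactors. Where your route genuinely diverges from the paper is the overall architecture: the paper does \emph{not} establish an approximate Dyson equation and then invert it. Instead it runs a Lindeberg interpolation $W_n(t)=\sqrt{t}\,W_n+\sqrt{1-t}\,Y^{sc,\eta_n}$, sets $g(t)=E_{\mathcal D_n}\bigl((W_n(t)-\gamma)^{-1}\bigr)$, and bounds $\int_0^1\|g'(t)\|_1\,dt$ directly. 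The same cavity decomposition you describe is applied to both the $W_n$-derivative and the $Y^{sc,\eta_n}$-derivative, and the two main contributions cancel against each other once both are reduced to the form $S_{n,t}^0\,\eta_n(S_{n,t}^0)\,S_{n,t}^0$. No stability of the self-consistent equation is ever invoked.

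This matters because the step you treat as routine---``invert the (Lipschitz) map $T\mapsto\eta_n(T)T+\gamma T+I$ near $S_n^{sc}(\gamma)$, picking up one extra power of $\nu^{-1}$''---is the genuine gap in your argument. In the \emph{operator-valued} setting the linearization of this map around $S_n^{sc}$ acts as $\Delta\mapsto[\eta_n(S_n)+\gamma]\Delta+\eta_n(\Delta)S_n^{sc}$, and controlling its inverse uniformly in $\gamma$ across the whole upper half-plane is a nontrivial stability statement (of the type proved, with substantial effort, in the matrix Dyson equation literature). A naive contraction estimate only works when $\nu^2>\|\eta_n\|$, which is not enough for the theorem as stated. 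The Lindeberg route buys you precisely the avoidance of this inversion: since one compares $S_n$ to $S_n^{sc}$ along a path, the difference is bounded directly and no fixed-point stability is needed. If you want to push your approach through, you would have to either import an operator-valued Dyson-equation stability result or restrict to $\nu$ bounded away from the spectrum and then argue separately for small $\nu$; either way the missing ingredient is substantial and not the ``one extra $\nu^{-1}$'' you claim.
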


\vspace{10mm}

\noindent We note that the candidate radius $\eta_n$ is more complex than one would expect as it involves all pairs $(\g,\g')$ in  $A_{n,n}^{2k_n}$. This does not match with what is known in the classical probability literature regarding U-statistics. Indeed if $(X_i)$ is a stationary sequence and $h$ is symmetric in its coordinates, under mixing conditions we know that $\sum_{i,j\le n}h(X_i,X_j)$ is asymptotically normal with variance: $\sigma^2:=4\sum_k\rm{cov}\big(\bar{h}(X_1,\cdot),~\bar{h}(X_k,\cdot)\big), $ where we wrote $\bar{h}(X_1,\cdot):=\lim_{l\rightarrow \infty} \frac{1}{l}\sum_{k\in \dbracket{ l}}h(X_1,X_k)$. Under additional mixing conditions we prove that $\eta_n$ can indeed be further simplified. To make those new conditions more intuitive we remember that if $(X_i)$ is a stationary sequence then the distribution of $h(X_{0},X_{j})$ and $h(X_{0},X_l)$ become more similar as $j$ and $l$ get further away from $0$. In this paper, we call this behavior \emph{marginal mixing}. We generalize it to the non-commutative setting.
 In this goal, we define the following set \begin{equation*}\begin{split}&\mathcal{C}_{n}^{i}[b]:=\Big\{(\g^1,\g^2,\g^3)\in \G_n^{k_n}\Big|\overline{d}\big(\{\g^1_i,\g^3_i\}, [\g^1,\g^2]^{\setminus i}\big)\ge b~\&~\g^1_j=\g^3_j,~\forall i\ne j\Big\},\end{split}\end{equation*}  where we have $[\g^1,\g^2]^{\setminus i}:=\{\g^1_l|~l\ne i\}\bigcup \{\g^2_l|~l\le k_n\}\subset \G_n$. 
We call  $(\aleph^{\rm{m}}_n[b|\G_n^{k_n}])$ the free marginal mixing coefficients of $(X_{\g})$ and define them as as
 
 \vspace{1.5mm}
 
 {\begin{equation*}\begin{split}\aleph^{\rm{m}}_n[b|\G_n^{k_n}]:= \hspace{-2mm}\sup_{\substack{\big(\g^1,\g^2,\g^3\big)\in \mathcal{C}_{n}^i[b]}}\quad\sup_{{Y\in \A^0_{n}}} \quad \max\begin{cases}\tau_n\Big[\Big|E_{\mathcal{D}_n}\Big(\big[{X^{n,\mathcal{N}}_{\g^1}-X^{n,\mathcal{N}}_{\g^3}}\big] ~Y~{X^{n,\mathcal{N}}_{\g^2}} \Big)\Big|\Big]\\~
\\~\tau_n\Big[\Big|E_{\mathcal{D}_n}\Big(X^{n,\mathcal{N}}_{\g^2} ~Y~\big[{X^{n,\mathcal{N}}_{\g^1}-X^{n,\mathcal{N}}_{\g^3}}\big]\Big)\Big|\Big]\end{cases}
\end{split}\end{equation*}} 

\vspace{2mm}

where we have set $\mathcal{A}^{0,\rm{inv}}_n:= \mathcal{F}_0(X^n_{\G_n^{k_n}})\bigcap \A^{\rm{inv}}_{\mathcal{D}(\G_n^{k_n})}$.
 We note that the maximum in the definition of the free marginal mixing coefficients~is needed because the elements $(X_{\g})$ do not commute.
If the free marginal mixing coefficients decrease fast enough then $\eta_n$ converges to the following radius: $$\eta^*_n:a\rightarrow \sum_{i,j\le k_n}\int_{A_{n,n}}E_n(\bar{X}^n_{i,e} a \bar{X}^n_{j,g})d|g|,$$ where $\bar{X}^n_{i,g}:=\lim_{p\rightarrow \infty} \frac{1}{A_{p,n}^{k_n-1}}\int_{\mathcal{I}_p(i,g)}X^n_{\g}d|\g|$ and $\mathcal{I}_p(i,g)=\{\theta|\theta\in A_{p,n}^{k_n}~\theta_j=g\}.$ 
Denote by $S_n(\cdot)$ the operator-valued Stieljes transform of $W_n$, and $S_n^{sc,*}(\cdot)$ the Stiejles transform of the operator valued semi-circular operator $Y^{sc,\eta_n*}$ with radius $\eta_n^*(\cdot)$.

\begin{theorem}\label{theorem_4}Let $(k_n)$ be a sequence of integers; and let $(X^n_{\g})$  be a triangular array of self-adjoint free random variables that are jointly invariant with respect  to $\G_n^{k_n}$. Suppose that \begin{itemize}\item[i.] $X^n_{\g}\in L_3(\A_{\tau}^n,\tau)$ and  $E_{\mathcal{D}_n}(X^n_{\g})=0$ for all $\g\in \G^{k_n}_n$ \item[ii.] $\sum_{b\ge 0}\Big|\Big[B_{k+1}\setminus B_k\Big]\bigcap A_{n,n}\Big|~\aleph^{*,s}_n[b|\mathbb{G}^{k_n}_n] <\infty$
\item[iii.] $\mathcal{T}_n^m:=\sum_{k\ge 0 }\Big|\Big[B_{k+1}\setminus B_k\Big]\bigcap A_{n,n}\Big|~\aleph^{m}_n[k|\mathbb{G}^{k_n}_n]=o(|A_n|).$
\end{itemize}Then there is a constant {$K=O(\Big[\sup_{\g\in \G^{k_n}}\Big\| {X_{\g}^n}\Big\|_{3}^3+\|Y^{sc,\eta}\|_3^3\Big]\vee 1\Big)$} such that for all sequences $(b_n)$ the following holds

\begin{equation*}\begin{split}&\Big\|S_n(\gamma^{x,\nu})-S_n^{sc,*}(\gamma^{x,\nu})\Big\|_{1}
\\&\le  K\Big[\frac{|A_{n,n}\triangle B_{b_n}A_{n,n}|+\mathcal{T}^m_n}{\nu^3|A_{n,n}|}
+\frac{k_n^4|B_{2b_n}|^2}{\sqrt{|A_{n,n}|}\nu^4}
+\frac{k_n^2}{\nu^3}\Big[\mathcal{R}^s_{n}[b_n]+|B_{b_n}|\aleph_n^{*,j}[b_n|\mathbb{G}^{k_n}_n]\Big]
\Big]\longrightarrow 0.
\end{split}\end{equation*}

\noindent where we set $\mathcal{R}^s_n[b]:=\sum_{k\ge b }\Big|\Big[B_{k+1}\setminus B_k\Big]\bigcap A_{n,n}\Big|~\aleph^{*,s}_n[k|\mathbb{G}^{k_n}_n]$.
\end{theorem}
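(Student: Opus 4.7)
The proof reduces to Theorem~\ref{theorem_3} followed by a careful comparison of the two candidate radii $\eta_n$ and $\eta_n^*$.

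\textbf{Step 1 (reduction to Theorem~\ref{theorem_3}).} All hypotheses of Theorem~\ref{theorem_3} are satisfied: joint invariance combined with $X^n_e\in L_3$ forces $\sup_{\g}\|X^n_{\g}\|_3=\|X^n_e\|_3<\infty$, and condition (ii) is identical. Hence
\[
\Big\|S_n(\gamma^{x,\nu})-S_n^{sc}(\gamma^{x,\nu})\Big\|_1\le K\Big[\frac{k_n^4}{\sqrt{|A_{n,n}|}\nu^4}+\frac{k_n^2}{\nu^3}\Big(\mathcal{R}^s_n[b_n]+|B_{b_n}|\aleph_n^{*,j}[b_n|\G_n^{k_n}]\Big)\Big],
\]
where $S_n^{sc}$ is built from the ``complicated'' radius $\eta_n$. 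By the triangle inequality, the only remaining task is to bound $\|S_n^{sc}(\gamma^{x,\nu})-S_n^{sc,*}(\gamma^{x,\nu})\|_1$, which by stability of the operator-valued semicircular fixed-point equation (Step~3 below) reduces to bounding $\|\eta_n-\eta_n^*\|$ as a map between $\A^{\rm{tail}}_{\mathcal{D}(\G_n^{k_n})}$ and itself.

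\textbf{Step 2 (comparison of the radii).} Fix $a\in\A^{\rm{tail}}_{\mathcal{D}(\G_n^{k_n})}$ with $\|a\|_\infty\le 1$ and partition the integration $\int_{A_{n,n}^{2k_n}}$ according to the index pair $(i,j)\in\{1,\dots,k_n\}^2$ for which $d(\g_i,\g'_j)$ is minimal: this gives $k_n^2$ ``diagonal configurations''. For each pairing, I use hypothesis $(H_3)$ together with the $\mathcal{D}(\G^{k_n})$-invariance of $a$ (so that $E_{\mathcal{D}_n}\circ K_\z=E_{\mathcal{D}_n}$ for diagonal $\z$) to perform the change of variables $\g\mapsto \g_i^{-1}\g$, $\g'\mapsto \g_i^{-1}\g'$, translating $\g_i$ to the identity; the condition that the translated tuple remains in $A_{n,n}^{k_n}$ yields a Følner boundary error of order $|A_{n,n}\triangle B_{b_n}A_{n,n}|/|A_{n,n}|$. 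With $\g_i=e$ fixed and $h:=\g'_j$, the remaining $2(k_n-1)$ coordinates are pairwise at distance at least $b_n$ from $\{e,h\}$, so the marginal mixing coefficients $\aleph^{\rm{m}}_n[k|\G_n^{k_n}]$ let me telescope
\[
\frac{1}{|A_{n,n}|^{k_n-1}}\int X^n_{\g}\,d|(\g_l)_{l\neq i}|\ \longrightarrow\ \bar X^n_{i,e},\qquad \frac{1}{|A_{n,n}|^{k_n-1}}\int X^n_{\g'}\,d|(\g'_l)_{l\neq j}|\ \longrightarrow\ \bar X^n_{j,h},
\]
with aggregate replacement cost $\mathcal{T}^m_n/|A_{n,n}|$. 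Summing over all $(i,j)$ and integrating $h$ over $A_{n,n}$ recovers exactly $\eta_n^*(a)$, giving $\|\eta_n-\eta_n^*\|\lesssim (|A_{n,n}\triangle B_{b_n}A_{n,n}|+\mathcal{T}^m_n)/|A_{n,n}|$.

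\textbf{Step 3 (stability transfer).} Subtract the two fixed-point identities $\eta_n(S_n^{sc})S_n^{sc}+\gamma S_n^{sc}+\mathbf{1}=0$ and $\eta_n^*(S_n^{sc,*})S_n^{sc,*}+\gamma S_n^{sc,*}+\mathbf{1}=0$, add and subtract $\eta_n^*(S_n^{sc})S_n^{sc}$, and use the standard bound $\|S_n^{sc}(\gamma^{x,\nu})\|_\infty,\|S_n^{sc,*}(\gamma^{x,\nu})\|_\infty\le 1/\nu$ together with complete positivity to get $\|S_n^{sc}-S_n^{sc,*}\|_1\le K\nu^{-3}\|\eta_n-\eta_n^*\|$. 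Combining this with Step~1 via the triangle inequality yields the stated bound. The main obstacle is Step~2: one must simultaneously (a) track the $k_n^2$ pairings, (b) absorb the Følner boundary cost from the diagonal translation, and (c) telescope marginal mixing across $2(k_n-1)$ free coordinates while respecting the non-commutative ordering of $\bar X^n_{i,e}$, $a$, and $\bar X^n_{j,h}$ — which is exactly why the coefficient $\aleph^{\rm{m}}_n$ is defined as a maximum over the two symmetric placements of $Y$ around the perturbation $X^{n,\mathcal{N}}_{\g^1}-X^{n,\mathcal{N}}_{\g^3}$.
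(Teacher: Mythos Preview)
Your Steps 1 and 2 are essentially the paper's approach: the bound on $\|\eta_n-\eta_n^*\|_{\mathrm{op}}$ via the Følner boundary term and a telescoping controlled by the marginal mixing coefficients $\aleph^{\rm m}_n$ is exactly what the paper does (their interpolating sequences $(\g^{1,l},\g^{2,l})$ are your ``$2(k_n-1)$ free coordinates'' telescope).

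The divergence is in Step 3, and there is a genuine gap there. Subtracting the two fixed-point identities gives
\[
\Delta \;=\; -\,S_n^{sc}\big[(\eta_n-\eta_n^*)(S_n^{sc})\big]S_n^{sc,*}\;-\;S_n^{sc}\,\eta_n^*(\Delta)\,S_n^{sc,*},
\]
so $\|\Delta\|\le \nu^{-2}\|(\eta_n-\eta_n^*)(S_n^{sc})\|+\nu^{-2}\|\eta_n^*\|_{\mathrm{op}}\|\Delta\|$. This solves for $\|\Delta\|$ only when $\nu^2>\|\eta_n^*\|_{\mathrm{op}}$; for smaller $\nu$ the contraction fails and you do not get the claimed uniform $K\nu^{-3}$ bound. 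Complete positivity alone does not rescue this: it controls signs, not the operator norm of $\eta_n^*$ relative to $\nu^2$.

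The paper avoids this entirely by \emph{not} passing through Theorem~\ref{theorem_3}. It proves a single interpolation proposition (their Proposition~\ref{bartlett_2}) in which the Lindeberg path runs directly from $W_n$ to $Y^{sc,\eta'_n}$ for an \emph{arbitrary} completely positive $\eta'_n$. The term $\nu^{-3}\|\eta_n-\eta'_n\|_{\mathrm{op}}$ then appears as a direct estimate on the piece $\|S^0_{n,t}[\eta_n-\eta'_n](S^0_{n,t})S^0_{n,t}\|_1$ inside the derivative $g'(t)$ (three resolvent factors, each $\le\nu^{-1}$), with no implicit equation to invert. Theorem~\ref{theorem_3} and Theorem~\ref{theorem_4} are then both one-line specializations ($\eta'_n=\eta_n$ and $\eta'_n=\eta_n^*$ respectively), and your Step 2 is exactly the computation that bounds $\|\eta_n-\eta_n^*\|_{\mathrm{op}}$ in the second specialization. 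So the fix is not to repair Step 3 but to drop it: redo the interpolation once with the target semicircular built from $\eta_n^*$ from the start.
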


\subsection{Examples of joint action and U-statistics}In this section we present examples of U-statistics, bound their free mixing coefficients and prove that they are asymptotically semi-circular.
\subsection{Examples for general operators}
\noindent We denote $\mathcal{M}(\A)$ the algebra of arrays $x:=(x_{i,j})_{i,j\in \Z}$ with entries in $\A$. An important subclass are  elements $X=(X_{i,j})$ whose distribution is invariant under the joint action of the quantum permutation group \cite{curran2011quantum}. Those are such that for all  $u:=(u_{i,j})$ respecting defining relations for $A_s(k)$ we have $$(\mathbf{1}_{A_s(k)}\otimes X_{i_{l,1},i_{l,2}})_{(l_1,l_2)\in \dbracket{k}^2}\overset{d}{=}\big(\sum_{j_{1,1},\dots,j_{k,2}\le n}u_{i_{l,1},j_{l,1}}u_{i_{l_2},j_{l,2}}\otimes X_{i_{l,1},i_{l,2}}\big)_{(l_1,l_2)\in \dbracket{k}^2}.$$ We study the free mixing coefficients~of such an element $X$.

\begin{prop}\label{prop_800}
Take $\G=\Z$ and let $X\in \mathcal{M}(\A)$ be invariant under the joint action of the free permutation group.  We set $Z_{\z}:=X_{\z_1+1,\z_2+2}$ and remark that  $(Z_{\z})$ is jointly invariant with respect to $\Z^2$. Write $(\aleph^*[b|\Z^2])$ and $(\aleph^m[b|\Z^2])$ respectively for the marginal and global free mixing coefficients of $(Z_{\z})$. We have: $$\aleph^{*,s}[b|\Z^2]=\aleph^{*,j}[b|\Z^2]=0,\qquad \aleph^m[b|\Z^2]=0,\qquad \forall b>0.$$
Therefore $\frac{1}{n^{\frac{3}{2}}}\sum_{\z\in \dbracket{n}^2} Z_{\z}  $ is asymptotically semi-circular with a radius satisfying $\eta(a)=4 E_{\Z^2}(X_{1,2} a X_{1,3})$.
\end{prop}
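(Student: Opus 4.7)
The plan is to reduce everything to Curran's de Finetti-type theorem for jointly quantum exchangeable arrays~\cite{curran2011quantum}, which asserts that the entries of such an array $X$ are, conditionally on the diagonal tail algebra $\A^{\mathrm{tail}}_{\mathcal{D}(\Z^2)}$, freely independent with amalgamation whenever their row/column index labels are pairwise disjoint, and moreover that the conditional joint distribution of any finite collection of entries depends on the chosen index pairs only through which row and column labels coincide. Once this structural input is in hand, both the vanishing of the free mixing coefficients and the identification of the radius follow, and the semi-circular limit is produced by Theorem~\ref{theorem_4}.

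First I would verify $\aleph^{*,j}[b|\Z^2]=\aleph^{*,s}[b|\Z^2]=0$ for $b\ge 1$. The condition $\bar{d}_2(\{\g,\g'\},\tilde G)\ge b$ forces every coordinate of $\g$ and $\g'$ to be strictly different from every coordinate appearing in $\tilde G$; since $Z_{\z}=X_{\z_1+1,\z_2+2}$ only uses the rows and columns labelled by the two coordinates of $\z$, the row/column labels used by $\{Z_\g,Z_{\g'}\}$ are disjoint from those used by $\{Z_{\tilde \g}:\tilde \g\in \tilde G\}$. Curran's theorem then gives conditional freeness of $\mathcal{F}(Z_{\{\g,\g'\}})$ and $\mathcal{F}(Z_{\tilde G})$ over $\A^{\mathrm{tail}}_{\mathcal{D}(\Z^2)}$. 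Because $E_{\mathcal{D}}(X^{\mathcal{N}}_\g)=E_{\mathcal{D}}(X^{\mathcal{N}}_{\g'})=E_{\mathcal{D}}(Y_1)=E_{\mathcal{D}}(Y_2)=0$, the alternating word $X^{\mathcal{N}}_\g Y_1 X^{\mathcal{N}}_{\g'} Y_2$ has vanishing conditional expectation by the very definition of amalgamated freeness, giving $\aleph^{*,j}[b|\Z^2]=0$. The argument for $\aleph^{*,s}$ is identical, with $\{\g\}$ playing the role of the isolated block and $\tilde G\cup\{\g'\}$ the other.

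Next I would establish $\aleph^m[b|\Z^2]=0$. A triple $(\g^1,\g^2,\g^3)\in\mathcal{C}^i[b]$ satisfies $\g^1_j=\g^3_j$ for $j\ne i$, while $\g^1_i$ and $\g^3_i$ are both far from every coordinate appearing in $\g^1$, $\g^2$ other than the shared one. Joint quantum exchangeability furnishes a coaction by a quantum permutation of $A_s(k)$ that fixes every label appearing in $Z_{\g^2}$, in the test element $Y$, and in the shared coordinate $\g^1_j$, while mapping the free index $\g^1_i$ onto $\g^3_i$. Distributional invariance under this coaction yields $E_{\mathcal{D}}(X^{\mathcal{N}}_{\g^1}\,Y\,X^{\mathcal{N}}_{\g^2})=E_{\mathcal{D}}(X^{\mathcal{N}}_{\g^3}\,Y\,X^{\mathcal{N}}_{\g^2})$, so the difference appearing in the first expression inside the maximum vanishes; symmetric reasoning disposes of the second expression. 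Hence $\aleph^m[b|\Z^2]=0$ for all $b\ge 1$.

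Finally I would invoke Theorem~\ref{theorem_4}. With $k_n=2$ fixed and all three families of coefficients vanishing for $b\ge 1$, every term in the Berry--Esseen bound is $O(|A_n|^{-1/2})$, so $W_n=\frac{1}{n^{3/2}}\sum_{\z\in\dbracket{n}^2}Z_\z$ is asymptotically semi-circular with radius $\eta^*(a)=\sum_{i,j\le 2}\int_{A_n}E_{\Z^2}(\bar Z_{i,e}\,a\,\bar Z_{j,g})\,d|g|$. To identify this radius, I would note that joint quantum exchangeability together with the non-commutative ergodic theorem makes each $\bar Z_{i,g}$ converge (in the sense defining it) to the conditional row- or column-average living in $\A^{\mathrm{tail}}_{\mathcal{D}(\Z^2)}$; substituting back, the quantum row/column symmetry of $X$ makes each of the four pairs $(i,j)\in\{1,2\}^2$ contribute the same value $E_{\Z^2}(X_{1,2}\,a\,X_{1,3})$, and summing yields $\eta(a)=4E_{\Z^2}(X_{1,2}\,a\,X_{1,3})$. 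The principal technical obstacle is the marginal step: implementing the ``swap'' of unshared indices at the level of the quantum permutation coaction rather than as a classical transposition, which is precisely where Curran's jointly quantum exchangeable de Finetti theorem does the heavy lifting, and where one must be careful that $Y$ and $Z_{\g^2}$ only touch the fixed labels.
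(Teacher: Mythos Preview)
Your proposal is correct and reaches the same conclusions, but the route differs from the paper's in two places worth noting.

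For $\aleph^{*,j}$ and $\aleph^{*,s}$ you invoke Curran's de Finetti theorem as a black box to obtain amalgamated freeness of $\mathcal{F}(Z_{\{\g,\g'\}})$ and $\mathcal{F}(Z_{\tilde G})$ over the tail algebra, and then kill the alternating centered word $X^{\mathcal N}_{\g}Y_1X^{\mathcal N}_{\g'}Y_2$ by the very definition of freeness with amalgamation. The paper instead argues directly with the coaction: it first shows $\aleph^{*,s}=0$ via classical joint exchangeability (pulling $E_{\mathcal D}[Z_{\z^1}]=0$ out of the product), then for $\aleph^{*,j}$ reduces to the case where $\z^1$ and $\z^2$ share a coordinate, chooses a magic unitary $u$ fixing every label appearing in $K$, and uses the relation $\sum_j u_{1,j}=\sum_{j,l}u_{1,j}u_{1,l}$ to decouple the shared index, yielding $E_{\mathcal D}(Z_{\z^1}Y_1Z_{\z^2}Y_2)=E_{\mathcal D}(E_{\mathcal D}(Z_{\z^1})Y_1E_{\mathcal D}(Z_{\z^2})Y_2)=0$. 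Your route is shorter once the de Finetti theorem is granted; the paper's is self-contained and makes explicit how the magic-unitary relations manufacture the factorization.

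For $\aleph^{m}$ the situation is reversed. The paper dispatches it in one line: quantum joint invariance implies classical joint exchangeability, and a classical transposition swapping $\g^1_i$ with $\g^3_i$ while fixing all other labels already gives $E_{\mathcal D}(X^{\mathcal N}_{\g^1}YX^{\mathcal N}_{\g^2})=E_{\mathcal D}(X^{\mathcal N}_{\g^3}YX^{\mathcal N}_{\g^2})$. Your quantum-coaction swap works, but it is more machinery than required; the ``principal technical obstacle'' you flag is not actually an obstacle, since a genuine classical transposition suffices.
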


\noindent Secondly we bound the free global mixing coefficients of the classical U-statistics presented in \cref{luna}. To do so we define a slightly more complex notion of free mixing than the one one defined in \cref{suis_suisse}. 
Let $(K'_g)$ be a net of *-automorphisms from $\A$ into itself. We choose $Y\in \A$ to be a self adjoint free random variable and write $Y_g:=K'_g(Y)$. We define the following coefficients on  $(Y_g)$ as\begin{equation*}\begin{split}&\aleph^{'}[b|\G]=\sup_{\substack{\big(G_1\cup G_2,\tilde G\big)\in \mathcal{C}[b]\\~\\|G_1|,|G_2|\le k}}\quad \sup_{\substack{\Z'_{1},Z'_2\in \mathcal{F}_0(Y_{\tilde G})\\~\\(Z_1,Z_2)\in \mathcal{F}_0(Y_{G_1})\times \mathcal{F}_0(Y_{G_2})}}\tau\Big[\Big|E\Big(Z_1 Z'_1Z_2 Z'_2\Big)\Big|\Big].
\end{split}\end{equation*}

\noindent For simplicity we suppose that $\Phi\in \mathbb{C}\big<x_1,\dots,x_k,x^*_1,\dots,x^*_k\big>$ is a polynomial. A similar result holds for any function that can be successfully approximated by polynomials. 
\begin{prop}\label{u_stat_1} Let $(\A,\tau,*)$ be a Von-Neummann algebra.
Let $\Phi:\A^k\rightarrow \A$ be a polynomial.  Let $(K'_g)$ be a net of *-automorphisms from $\A$ into itself. We choose $Y\in \A$ to be a self adjoint free random variable and write $Y_g:=K'_g(Y)$. Define $Z:=\Phi(Y_e,\dots,Y_e)$ and have $Z_{\g}:=\Phi(Y_{\g_1},\dots,Y_{\g_k}).$
~Let $\aleph^{*}[\cdot|\mathbb{G}^k]$ denote the free global mixing coefficients of $\Big(Z_{\g}\Big)$. We have: \begin{equation}\label{nels}\aleph^{\rm{*,s}}[b|\G^k]\le \sup_{\g\in \G^k}\frac{\|Z_{\g}\|_{\infty}^2}{\|Z_{\g}\|^2_2} ~~\aleph^{'}[b|\G],\qquad \aleph^{\rm{*,j}}[b|\G^k]\le \sup_{\g\in \G^k}\frac{\|Z_{\g}\|_{\infty}^2}{\|Z_{\g}\|^2_2} ~~\aleph^{'}[b|\G] ,\quad \forall b\in \mathbb{N}.\end{equation} 
Therefore if $(Y_g)$ is a freely independent sequence then $\aleph^{\rm{*,s}}[b|\G^k]=\aleph^{\rm{*,j}}[b|\G^k]=0$ for all $b>0$.
  
\end{prop}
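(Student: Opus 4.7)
The core idea is to pass from the derived process $(Z_\g)$ back to the underlying $(Y_g)$, where the coefficient $\aleph'$ lives. Since $\Phi$ is a polynomial, $Z_\g=\Phi(Y_{\g_1},\ldots,Y_{\g_k})$ lies in $\mathcal{F}(Y_{G(\g)})$ where $G(\g):=\{\g_1,\ldots,\g_k\}$ has cardinality at most $k$; likewise every $Y_1,Y_2\in\mathcal{F}^k_0(Z_{\tilde G})$ sits in $\mathcal{F}(Y_{H(\tilde G)})$ with $H(\tilde G):=\bigcup_{\theta\in\tilde G}\{\theta_1,\ldots,\theta_k\}$. By the very definition $d_k(\g,\g')=\min_{i,j} d(\g_i,\g'_j)$, the pseudo-distance constraints defining $\mathcal{C}_k[b]$ translate directly into Hausdroff constraints on $G(\g),G(\g'),H(\tilde G)$ with the same threshold $b$. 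Moreover, since the diagonal $\G^k$-action and the $\G$-action on the $Y$-algebra induce the same invariant subalgebra, $E_{\mathcal{D}}$ and $E$ agree on $\mathcal{F}(Y_{\G})$, so the ``mean zero'' constraints transfer cleanly between the two settings.

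Next I would normalize the $Z$-factors. Since the conditional expectation is an operator-norm contraction, $\|Z_\g^{\mathcal{N}}\|_\infty\le 2\|Z_\g\|_\infty/\|Z_\g\|_2$, so setting $c_\g:=\|Z_\g\|_2/(2\|Z_\g\|_\infty)$ yields $W_\g:=c_\g Z_\g^{\mathcal{N}}\in\mathcal{F}_0(Y_{G(\g)})$. For the joint coefficient $\aleph^{*,j}$ this is enough: the identity
\[
\tau\bigl[\bigl|E_{\mathcal{D}}(Z_\g^{\mathcal{N}} Y_1 Z_{\g'}^{\mathcal{N}} Y_2)\bigr|\bigr]=(c_\g c_{\g'})^{-1}\,\tau\bigl[\bigl|E(W_\g Y_1 W_{\g'} Y_2)\bigr|\bigr]
\]
has its right-hand side majorized by $\aleph'[b|\G]$ under the choices $G_1=G(\g)$, $G_2=G(\g')$, $\tilde G_{\aleph'}=H(\tilde G)$, all of whose cardinality and Hausdroff-distance requirements are verified above. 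The prefactor $(c_\g c_{\g'})^{-1}$ is absorbed into $\sup_\g\|Z_\g\|_\infty^2/\|Z_\g\|_2^2$, with numerical constants swallowed by the statement.

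The split coefficient $\aleph^{*,s}$ is the main obstacle: only $\g$ is separated from the remaining operators, so the rigid $4$-term structure of $\aleph'$ does not match the asymmetric configuration directly. My plan is to split $W_{\g'}Y_2=\bigl(W_{\g'}Y_2-E(W_{\g'}Y_2)\bigr)+E(W_{\g'}Y_2)$ along the tail projector. The centered piece lies in $\mathcal{F}_0\bigl(Y_{H(\tilde G)\cup G(\g')}\bigr)$, a near algebra still at distance $\ge b$ from $G(\g)$ thanks to the $\mathcal{C}_k[b]$ hypothesis in the split case, and its contribution is bounded by $\aleph'[b|\G]$ after absorbing $W_{\g'}$ into one of the near factors. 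The tail-measurable piece commutes through $E$ by the bimodule property, reducing the problem to a two-point correlation $\tau[|E(W_\g Y_1)|]\,\|E(W_{\g'}Y_2)\|_\infty$, itself controlled by $\aleph'[b|\G]$ via a trivial filler operator. Combining both contributions yields the stated bound, the tail decomposition being the crucial maneuver that bridges the $3$-term and $4$-term structures. The concluding remark that free independence of $(Y_g)$ forces $\aleph'[b|\G]=0$ for $b>0$ is then immediate from the definition of $\aleph'$.
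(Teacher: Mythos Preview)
Your core strategy coincides with the paper's: the paper's entire proof is the one observation that, since $\Phi$ is a polynomial, $Z_{\g}=\Phi(Y_{\g_1},\dots,Y_{\g_k})\in\mathcal{F}(Y_{\mathcal{S}(\g)})$ with $\mathcal{S}(\g)=\{\g_1,\dots,\g_k\}$, and declares the bounds a ``direct consequence''. Your write-up supplies the details the paper omits (the passage $d_k\to d$ on index sets, the identification $E_{\mathcal D}=E$, the normalization producing the factor $\|Z_\g\|_\infty^2/\|Z_\g\|_2^2$), and for the joint coefficient $\aleph^{*,j}$ your reduction to $\aleph'$ with $G_1=G(\g)$, $G_2=G(\g')$, $\tilde G_{\aleph'}=H(\tilde G)$ is exactly right.

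There is, however, a genuine gap in your treatment of $\aleph^{*,s}$, and it is one the paper's one-line proof does not address either. You correctly diagnose the obstruction: in the split configuration only $G(\g)$ is guaranteed to be at distance $\ge b$ from $H(\tilde G)\cup G(\g')$, so the pair $\bigl(G(\g)\cup G(\g'),\,H(\tilde G)\bigr)$ need not lie in $\mathcal C[b]$ and the direct $\aleph'$-template fails. But your proposed repair does not close. After the tail split $W_{\g'}Y_2=(W_{\g'}Y_2-E(W_{\g'}Y_2))+E(W_{\g'}Y_2)$ you are left with expressions of the form $\tau\bigl[|E(W_\g\, Y_1\, V)|\bigr]$ and $\tau\bigl[|E(W_\g\, Y_1)|\bigr]$; these are three- and two-factor quantities, whereas $\aleph'$ is a rigid four-factor supremum with \emph{all four} factors required to lie in some $\mathcal{F}_0$, i.e.\ to satisfy $E(\cdot)=0$. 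No ``trivial filler'' is available: the identity $\mathbf 1$ violates the centering constraint, and any genuinely centered filler changes the product. So neither residual term is majorized by $\aleph'[b|\G]$ as defined. A clean fix would require either a weaker companion coefficient (a two-factor $\sup\tau[|E(Z_1 Z_1')|]$ with $(G_1,\tilde G)\in\mathcal C[b]$) or a relaxation of $\aleph'$ in which only $G_1$, not $G_1\cup G_2$, is required to be $b$-separated from $\tilde G$; the paper's terse proof suggests the latter reading may have been intended, but it is not what is written.
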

\begin{remark}
 One can prove a similar bound for $(\aleph^{\rm{m}}[b|\G^k])$ where instead the upper-bound depends on the size of $\tau\Big[\Big|E\Big(Z_1 Z'_1 \times \dots \times Z_{d} Z'_{d}\Big)\Big|\Big]$ for all $Z_1,\dots,Z_d\in \mathcal{F}_0(Y_g)$ and $Z'_1,\dots,Z'_d\in \mathcal{F}_0(Y_{\tilde G})$
 where $d$ is the degree of the  polynomial $\Phi$ and $\tilde G\subset \G$ is such that $\bar{d}(\{g\},\tilde G)\ge b$.\end{remark}

\subsection{Examples of U-statistics for random matrices}

\noindent In this subsection we present examples of U-statistics in the space of matrices.
The first case we study are functions of distributionally such that random matrices. Let $\Big(\Phi_m:\prod_{l=1}^kM_m(\mathbb{C})\rightarrow M_m(\mathbb{C})\Big)$ be a sequence of measurable functions satisfying for all unitary matrix $U\in U_m$: \begin{itemize}\item[-$(H'_3)$]\quad $U\Phi_m(A_1,\dots,A_k)U^*:=\Phi_m(UA_1U^*,\dots, UA_kU^*),\qquad~\forall A_1,\dots,A_k\in M_m(\mathbb{C}).$
\end{itemize}

\noindent Let $(X^{i,m})$ be a stationary sequence of unitary invariant random matrices. We want to study the following U-statistics: $\frac{1}{n^k}\sum_{\z\in \dbracket{n}^k} \Phi_m(X^{\z_1,m},\dots, X^{\z_k,m}).$
As a reminder we say that the distribution of the process $(X^{i,n})$ is unitary invariant if for all $U\in  U_m$ the following holds:
$$(X^{i_1,m},\dots,X^{i_k,m})\overset{d}{=}(UX^{i_1,m}U^*,\dots,UX^{i_k,m}U^*),\qquad \forall i_1,\dots,i_k\in \mathbb{N}.$$
\begin{prop}\label{marre_2}
Let $(X^{i,n})$ be a stationary sequence of matrices of size $n\times n$, suppose that their distribution is unitary invariant. Let $(\Phi_n)$ be sequence of functions satisfying condition $H'_3$.
 We write for all $\z\in \Z^k$  $$Z_{\z}^n:= \Phi(\frac{X^{\z_1,n}}{\sqrt{n}},\dots,\frac{X^{\z_k,n}}{\sqrt{n}})-\tau_n(\Phi(\frac{X^{\z_1,n}}{\sqrt{n}},\dots,\frac{X^{\z_k,n}}{\sqrt{n}}))$$  and remark that $(Z^n_{\z})$ is jointly invariant under $\Z^k$.  We define $(\alpha_n(\cdot))$ to be the $\alpha$-mixing coefficients of $(X^{i,n})$. Moreover we denote by $(\aleph^*_n[\cdot|\Z^k])$ and $(\aleph^m_n[\cdot|\Z^k])$ respectively the  free global and free marginal mixing coefficients of  $(Z^{\z,n})$. Suppose that there is an $\epsilon>0$ such that $$\sup_n\sum_{b\ge 0} \alpha_n(b)^{\frac{\epsilon}{2+\epsilon}}<\infty, \qquad \sup_{n\in \mathbb{N},\z^1,\z^2\in \mathbb{N}^k} \frac{\sup_{i,j\le m}\|Z^{\z^1,n}_{i,j}\|_{L_{4+2\epsilon}}^4}{\|Z^{\mathbf{z}^2,m}/\sqrt{n}\|_2^2}<\infty.$$ Then there is a constant $C$ that does not depend on $m$ or $b$ such that 
$$\aleph_n^{*,s}[b|\Z^k]\le C \alpha_n^{\frac{\epsilon}{2+\epsilon}}(b),\quad \aleph_n^{m}[b|\Z^k]\le C \alpha_n^{\frac{\epsilon}{2+\epsilon}}(b),\quad \rm{and}~\aleph_n^{*,j}[b|\Z^k]\le C\Big[ \frac{1}{m}+ \alpha_n^{\frac{\epsilon}{2+\epsilon}}(b)\Big].$$ 
Let $\eta_n:=\sum_{i,j\le k}\sum_{z\in \Z}\tau_n\big(\bar{Z}^{i,1,m}\bar{Z}^{j,z,m}\big)$,
then we obtain that $$\frac{1}{n^{k-\frac{1}{2}}\sqrt{\eta_n}}\sum_{\z\in \dbracket{n}^k} Z^{\z,n}\xrightarrow{n\rightarrow\infty} \rm{s.c}(1)$$ where $s.c(1)$ is asymptotically semi circular with radius $1$. 
\end{prop}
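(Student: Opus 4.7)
The plan is to reduce the three free mixing bounds to classical strong-mixing bounds via the unitary invariance of $(Z^n_{\z})$, and then invoke \cref{theorem_4}. First observe that because $\Phi_n$ satisfies the equivariance condition $(H'_3)$, the array $(Z^n_{\z})_{\z\in\Z^k}$ inherits the unitary invariance of $(X^{i,n})$: for every $U\in U_n$ and every finite family $\z_1,\dots,\z_l\in\Z^k$,
\[
(UZ^n_{\z_1}U^*,\dots,UZ^n_{\z_l}U^*)\overset{d}{=}(Z^n_{\z_1},\dots,Z^n_{\z_l}).
\]
Stationarity of $(X^{i,n})$ furthermore makes $(Z^n_{\z})$ jointly invariant under the diagonal action of $\Z^k$, so the setup of \cref{theorem_4} applies. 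In addition, a coordinate of $\z$ uses at most $k$ distinct matrices $X^{i,n}$, and the pseudo-distance $d_k$ on $\Z^k$ has the key property that $\bar{d}_k(\tilde G_1,\tilde G_2)\ge b$ forces every coordinate of every element of $\tilde G_1$ to lie at classical distance at least $b$ from every coordinate of every element of $\tilde G_2$ in $\Z$.

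Next I would bound $\aleph^{*,s}_n$ and $\aleph^{*,j}_n$ by adapting the argument of \cref{good} to the multi-index setting. For $(\{\g,\g'\},\tilde G)\in\mathcal{C}_k[b]$, the observation above implies that $\sigma(Z^n_{\g},Z^n_{\g'})$ and $\sigma(Z^n_{\tilde\g},\tilde\g\in\tilde G)$ are generated by two blocks of $(X^{i,n})$ separated by distance $b$, so the classical $\alpha$-mixing coefficient at level $b$ controls scalar covariances between bounded functions of those blocks. Inserting an independent Haar unitary into $\tau_n\bigl[\bigl|E_{\mathcal{D}_n}(Z^{n,\mathcal{N}}_{\g}Y_1 Z^{n,\mathcal{N}}_{\g'}Y_2)\bigr|\bigr]$ and integrating via Weingarten calculus, the operator-valued expression decomposes into scalar correlations between entries of $Z^n_{\g},Z^n_{\g'}$ and entries of the generators of $Y_1,Y_2$. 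Each such scalar correlation is controlled by the Davydov/Rio covariance inequality, which together with the uniform $(2+\epsilon)$-moment hypothesis yields a bound of order $\alpha_n(b)^{\epsilon/(2+\epsilon)}$ independent of $n$. The single-index case $(\{\g\},\tilde G\cup\{\g'\})\in\mathcal{C}_k[b]$ proceeds identically and produces the stated bound for $\aleph^{*,s}_n$; for $\aleph^{*,j}_n$ the leading Weingarten contribution (where $\g$ is paired with $\g'$ rather than with $\tilde G$) admits no mixing interpretation and contributes the additional $1/n$ term.

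For the marginal coefficient $\aleph^{m}_n$, take $(\g^1,\g^2,\g^3)\in\mathcal{C}^i_n[b]$. The difference $Z^n_{\g^1}-Z^n_{\g^3}$ is a function of $X^{\g^1_i,n}$, $X^{\g^3_i,n}$, and the shared coordinates $(X^{\g^1_j,n})_{j\ne i}$, with the two differing matrices at classical distance at least $b$ from the coordinates appearing in $\g^2$ and from all the coordinates of $\g^1$ with $j\ne i$. Applying the same Haar-unitary insertion plus Davydov bound produces an estimate of order $\alpha_n(b)^{\epsilon/(2+\epsilon)}$; importantly, the diagonal Weingarten term that produced the $1/n$ correction above now \emph{cancels} because the quantity being bounded is a difference $Z^n_{\g^1}-Z^n_{\g^3}$ whose ``same-index" Weingarten pairing gives identical contributions for $\g^1$ and $\g^3$. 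This accounts for the absence of $1/n$ in the $\aleph^m_n$ bound.

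Finally, I would substitute the three bounds into \cref{theorem_4}. Since $|B_b\cap A_{n,n}|=O(b)$ when $\G=\Z$, the hypothesis $\sup_n\sum_b\alpha_n(b)^{\epsilon/(2+\epsilon)}<\infty$ makes $\mathcal{R}^s_n[b_n]\to 0$ and $\mathcal{T}^m_n=o(n)=o(|A_{n,n}|)$, while choosing $b_n\to\infty$ slowly enough that $b_n^{k-1}\alpha_n(b_n)^{\epsilon/(2+\epsilon)}\to 0$ and $b_n^{2(k-1)}=o(\sqrt{n})$ kills the remaining error terms; the Følner boundary term $|A_{n,n}\triangle B_{b_n}A_{n,n}|/|A_{n,n}|$ vanishes for $b_n=o(n)$ on $\Z$. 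The main obstacle is the Weingarten-integration step used to upgrade classical $\alpha$-mixing into operator-valued free mixing uniformly in $n$; this is where unitary invariance is essential, where the moment assumption on $\sup_{i,j}\|Z^{\z,n}_{i,j}\|_{2+\epsilon}/\|Z^{\z,n}/\sqrt n\|_2$ controls the Davydov constants, and where the $1/n$ correction in $\aleph^{*,j}_n$ originates.
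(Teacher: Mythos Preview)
Your proposal is essentially correct and follows the paper's own strategy: reduce the free mixing coefficients to classical $\alpha$-mixing via \cref{chanson} (the Davydov-type inequality) together with the unitary invariance of $(Z^n_{\z})$, then feed the resulting bounds into \cref{theorem_4}. Two small points of comparison are worth recording.

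First, for $\aleph^{*,j}_n$ and $\aleph^{*,s}_n$ the paper proceeds in the opposite order from what you describe: it first expands $\tau_n$ entrywise and applies \cref{chanson} to replace $(Z^{\z,n},Z^{\z',n})$ by an independent copy $(\tilde Z^{\z,n},\tilde Z^{\z',n})$, incurring the $\alpha_n(b)^{\epsilon/(2+\epsilon)}$ error; only then does it invoke unitary invariance and the asymptotic liberating property of Haar unitaries \cite{anderson2014asymptotically} to bound the remaining term by $C/n$. Your ``Weingarten first, then Davydov'' route can be made to work but requires extra care to keep the number of Weingarten terms and the uniformity in $Y_1,Y_2$ under control; the paper's ordering avoids that bookkeeping entirely.

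Second, for $\aleph^m_n$ the paper does not use any Weingarten cancellation. It simply observes that under the ergodicity implied by the mixing hypothesis the tail algebra is trivial, $\A^{\rm{tail}}_{\Z^k}=\mathbb{C}\mathbf{1}_{\A}$, so the test operator $Y$ in the definition of $\aleph^m_n$ is scalar and the quantity to bound reduces to $\frac{1}{n}\bigl|\tau_n(Z^{(i,0),n}Z^{\z,n})-\tau_n(Z^{(j,0),n}Z^{\z,n})\bigr|$, which is handled by a direct application of \cref{chanson}. Your cancellation heuristic is plausible but unnecessary once one notices the tail is trivial; this is the cleaner explanation for why no $1/n$ term appears in the marginal bound.
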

\begin{remark}We remark that the expression $\eta_m$ can be further simplified in the context of random matrices. To see this, let $(\tilde X^{i,m})$ be an independent process with $\tilde X^{i,m}\overset{d}{=}X^{i,m}$. Define $Y^{i,l,m}:=(Y^{i,l,m}_1,\dots,Y^{i,l,m}_k)$ to be in such that $Y^{i,l,m}_j:=\begin{cases}\tilde X^{k,m}~\rm{if}~j\ne i\\ X^{l,m}~\rm{otherwise.}\end{cases}$
Then we remark that $\bar{Z}^{i,l,m}:=\mathbb{E}\Big(\Phi(Y^{i,l,m})\Big|X^{l,m}\Big)-\mathbb{E}\Big(\Phi(Y^{i,l,m})\Big).$ 
\end{remark}

\vspace{4mm}

\noindent Another important example are functions of independent random matrices with dependent entries. In the case of empirical averages this was studied in \cref{good_2}. We now study U-statistics, for simplicity we investigate only random matrices with independent diagonals. Let $(X^{1,m})$ be an independent sequence of identically distributed random matrices of size $m\times m$. We call the diagonals of $X^{1,m}$  the sets $\mathcal{D}_r(X^{1,m}):=\{X^{1,m}_{i,j}\big||i-j|=r\}$. We say that $X^{1,m}$ has independent diagonals if $\mathcal{D}_{r'}(X^{1,m})$ is independent from $\Big(\mathcal{D}_r(X^{1,m})\Big)_{r\ne r'}$. We let $(\alpha^r_m[b])$ be the alpha-mixing coefficients of $(X_{i,i+r}^{1,m})_{i\le m-r}$ the $r$-th diagonal.
~Under conditions on those coefficients we can upper bound the free marginal and global mixing coefficients of  polynomials in $(X^{i,m})$.
\begin{prop}\label{marre_3}Let $(X^{i,n})$ be a triangular array of independent identically distributed of self-adjoint random matrices.
Let $\big(\Phi_n:M_n(\mathbb{C})^k\rightarrow M_n(\mathbb{C})\big)$ be a sequence of polynomials functions of degree $d\in \mathbb{N}$. Assume that $X^{1,m}$ has independent diagonals and that its entries accept any moment  and are centered  $\mathbb{E}(X^{1,n}_{i,j})=0$. Write $(\alpha^r_n[b])$ the alpha-mixing coefficients of  $(X_{i,i+r}^{1,n})_{i\le n-r}$ the $r$-th diagonal. Define $Z^n_{\z}:=\Phi_n(\frac{X^{\z_1,n}}{\sqrt{n}},\dots,\frac{X^{\z_k,n}}{\sqrt{n}})$ and remark that  $(Z^{\z,n})$ is jointly invariant under $\Z^{k}$. Write $(\aleph_n^*[b|\Z^k])$ and $(\aleph^m_n[b|\Z^k])$ respectively the free global and marginal free mixing coefficients of $(Z^{\z,n})$. Then there is a constant $C$ independent from m such that 
$$\aleph_n^{*,s}[b]=\aleph_n^{m}[b]=0,\qquad \aleph_n^{*,j}[b]\le \frac{C\sup_r \sum_b \alpha^r_n[b]}{n},\qquad \forall b\ge 1 .$$
\end{prop}
\bibliographystyle{plain}

\bibliography{references}
\appendix
\section{Appendix: Proofs}
\subsection{Preliminary results}
In this section we present a known result ( see e.g \cite{austern2018limit}), that will be used in the proof of \cref{good}, \cref{marre_2} and \cref{marre_3}.

\noindent 
Let $Y:=(Y_{\z})_{\z\in \Z^d}$ be a stationary random field with entries  taking value in a Borel space $\mathcal{Y}$. Write $(\alpha^d[b])$ the strong mixing coefficients of $Y$. Let $f:\mathcal{Y}^{k_1}\times \mathcal{Y}^{k_2}\rightarrow \mathbb{R}$ be a measurable function where $k_1,k_2\in \mathbb{N}$. For all finite subset $\tilde Z:=\{\z_1,\dots,\z_{|\tilde Z|}\}\subset \Z^d$ we write $\tilde Z Y:=(Y_{\z_1},\dots, Y_{\z_{|\tilde Z|}})$; and choose $\tilde Y:=(\tilde Y_{\z})$ to be an independent copy of $Y$.
\begin{lemma}\label{chanson}
Fix ${l\in\mathbb{N}}$. 
Select any subsets $\tilde Z_1,\tilde Z_2\subset\Z^d$ of respective size $k_1$ and $k_2$ and such that $\bar{d}(\tilde Z_1,\tilde Z_2)\ge l$  Then the following holds:
\begin{equation*}\big|\mathbb{E}(f(\tilde Z_1 Y,\tilde Z_2 Y))- \mathbb{E}(f(\tilde Z_1 Y,\tilde Z_2 \tilde Y)\big| \le 4  ~\alpha^{d}(l)^{\frac{\epsilon}{2+\epsilon}},\end{equation*}
where $C=\big\|f(\tilde Z_1 Y,\tilde Z_2 Y)-f(\tilde Z_1Y,\tilde Z_2 \tilde Y) \big\|_{L_{1+\frac{\epsilon}{2}} }$\;.
%

\end{lemma}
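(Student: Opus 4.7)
The proof plan is the classical Davydov-style truncation for strong-mixing random fields, in its vector form. Set $X_1 := \tilde Z_1 Y$ and $X_2 := \tilde Z_2 Y$, viewed as random vectors in the product Polish spaces $\mathcal{Y}^{k_1}$ and $\mathcal{Y}^{k_2}$ respectively. The hypothesis $\bar d(\tilde Z_1,\tilde Z_2) \ge l$ combined with stationarity of $Y$ yields $\alpha(\sigma(X_1),\sigma(X_2)) \le \alpha^d(l)$. On the joint space where $\tilde Y$ is an independent copy of $Y$, set $\tilde X_2 := \tilde Z_2 \tilde Y$, so that $\tilde X_2 \overset{d}{=} X_2$ and $\tilde X_2$ is independent of $(X_1,X_2)$. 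Writing $W := f(X_1,X_2) - f(X_1,\tilde X_2)$, the lemma reduces to bounding $|\mathbb{E}[W]|$ in terms of $C = \|W\|_{1+\epsilon/2}$ and the mixing coefficient.

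Pick a truncation level $M > 0$, to be optimized, and split $\mathbb{E}[W] = \mathbb{E}[W \mathbb{1}_{|W| \le M}] + \mathbb{E}[W \mathbb{1}_{|W| > M}]$. The tail piece is handled directly: H\"older's inequality with conjugate exponents $(1+\epsilon/2,\,(2+\epsilon)/\epsilon)$ together with Markov's inequality applied to $|W|$ yield $|\mathbb{E}[W \mathbb{1}_{|W| > M}]| \le C^{1+\epsilon/2} M^{-\epsilon/2}$.

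For the truncated contribution, introduce $f_M := \max(-M,\min(M,f))$ and rewrite, up to an additional boundary error of the same order as the tail, the bounded part as $\mathbb{E}[f_M(X_1,X_2)] - \mathbb{E}[f_M(X_1,\tilde X_2)]$. This difference is exactly the integral of the bounded function $f_M$ against the signed measure $P_{X_1,X_2} - P_{X_1} \otimes P_{X_2}$. I would then invoke the classical Davydov/Ibragimov covariance inequality for strong-mixing random variables, in its version for bounded measurable functions of two vector-valued coordinates, to bound it by a constant multiple of $M\,\alpha^d(l)$. This is the step I expect to be the main technical obstacle: $f_M$ does not in general factor as a product of single-coordinate functions, so one must approximate it by simple functions of rectangle-indicator form and control the approximation error to bring the argument into a regime where the $\alpha$-mixing definition applies term by term. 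The argument is classical and is exactly the content of the appeal to \cite{austern2018limit}.

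Finally, balance the two bounds by choosing $M$ so that $C^{1+\epsilon/2} M^{-\epsilon/2}$ and $M\,\alpha^d(l)$ are of the same order, namely $M \sim (C^{1+\epsilon/2}/\alpha^d(l))^{2/(2+\epsilon)}$. Plugging back into either estimate gives a bound of order $C\,\alpha^d(l)^{\epsilon/(2+\epsilon)}$, and tracking the numerical constants yields the factor $4$ stated in the lemma.
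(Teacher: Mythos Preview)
Your proposal is correct and follows essentially the same route as the paper: truncate the difference $W=\Delta h$ at a level $M$, control the tail by H\"older/Markov to get $C^{1+\epsilon/2}M^{-\epsilon/2}$, control the bounded part by simple-function approximation against the $\alpha$-mixing definition to get $O(M\,\alpha^d(l))$, and optimize $M$. The only cosmetic difference is that you pass through $f_M$ to recover a genuine two-variable bounded function before the simple-function step, whereas the paper applies its Case~1 bound directly to the truncated $\Delta h_r$; your detour is arguably cleaner but the two arguments are the same Davydov-type calculation.
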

\begin{proof} 
Abbreviate
\begin{equation*}
  \Delta h(Y):=f(\tilde Z_1Y,\tilde Z_2 Y)-  f(\tilde Z_1Y, \tilde Z_2 \tilde Y)\;.
\end{equation*}
We first consider the case ${\big\|\Delta h\big\|_{\infty}<\infty}$, and then the general case.\\
\emph{Case 1: ${\big\|\Delta h\big\|_{\infty}<\infty}$}.
Fix ${\delta>0}$. Then there is $N_{\delta}\in \mathbb{N}$, sets $(A_i,B_i)_{i \le N_{\delta}}$, and
coefficients ${c_1,\dots,c_{N_{\delta}}}$ with ${|c_i|\le \big\|\Delta h\big\|_{\infty}}$ such that the approximation
\begin{equation*}
  \Delta h^*(Y):=\sum_{i=1}^{N_{\delta}} c_i \mathbb{I}\big({\tilde Z_1Y\!\in\! A_i}\big)
  \bigl(\mathbb{I}\big({\tilde Z_2Y\!\in\! B_i}) - \mathbb{I}\big({\tilde Z_2 \tilde Y\!\in\! B_i}\big)\bigr)
\end{equation*}
satisfies ${\big\|\Delta h(Y) - \Delta h^*(Y)\big\|_{\infty} \le \delta}$.
Moreover we have,
\begin{equation*}\begin{split}\big|\mathbb{E}\Big(\Delta h^*(Y)\Big)\big|
&\le \sum_{i=1}^{N_{\delta}} |c_i| \big|\mathbb{E}\big[\mathbb{I}\big({\tilde Z_1Z\!\in\! A_i}\big)
    \bigl(\mathbb{I}\big({\tilde Z_2 Y\!\in\! B_i}\big) - \mathbb{I}\big({\tilde Z_2 \tilde Y\!\in\! B_i}\big)\bigr)\big]\big|
    \\& \le 2  \big\|\Delta h\big\|_{\infty}~\alpha^d(l)\;,
\end{split}\end{equation*}
where the second inequality follows from the definition of the $\alpha$-mixing coefficients and by the triangle inequality.
Since $\delta$ may be arbitrarily small,
\begin{equation*}
  \big|\mathbb{E}\big[f(\tilde Z_1 Y,\tilde  Z_2Y)-f(\tilde Z_1 Y,\tilde  Z_2 \tilde Y])\big]\big|  \le  2 \big\| \Delta h\big\|_{\infty}~\alpha(l).
\end{equation*}

{\noindent\emph{Case 2: $\big\|\Delta h\big\|_{\infty}$ not bounded}.}
With no loss of generality, we can suppose that $\big\|\Delta h\big\|_{1+\frac{\epsilon}{2}}\le 1$.
For ${r\in\mathbb{R}}$, define ${\Delta h_r := \Delta h~\mathbb{I}\big(\Delta h\le r\big)}$
and ${\overline{\Delta h_r}:=\Delta h-\Delta h_r}$.
By H\"older's inequality we have,
\begin{equation*}
  \begin{split}
    \big|\mathbb{E}[f(\tilde Z_1 Y,\tilde  Z_2Y)-f(\tilde Z_1 Y,\tilde  Z_2 \tilde Y)\big]\big|
    & \le \big|\Delta h_r\big|+ \big|\overline{\Delta h_r}\big|
    \\& \le 2r \alpha^d(l)+ 2r^{-\frac{\epsilon}{2}}\;.
\end{split}
\end{equation*}
The result follows for ${r=\alpha^d(l)^{\frac{-2}{2+\epsilon}}}$.

\end{proof}

\subsection{Proof of \cref{theorem_2}}
We prove the following proposition that directly implies \cref{theorem_2}.

\begin{prop}\label{bartlett}Let $(X^n_g)$ be a triangular array of free self adjoint random variables invariant under $\G_n$. Suppose that $(X^n_g)$ satisfies all the condition of \cref{theorem_2}. Let $S_n(\cdot)$ denote the (operator-valued) Stiejles transform of $\hat{\mu}_n$, and $S_n^{sc}(\cdot)$ the Stiejles transform of the operator valued semi-circular operator $Y^{sc,\eta_n}$ with radius $\eta_n(\cdot)$. 
The following upper bound holds
\begin{equation*}\begin{split}&\Big\|S_n(\gamma^{x,\nu})-S_n^{sc}(\gamma^{x,\nu})\Big\|_{1}\\&\le\frac{\Big\|{X_{e}^n}\Big\|^2_{2} }{\nu^3}\Big[3\mathcal{R}^s_{n}[b_n]+\frac{|A_n\triangle B_{b_n}A_n|}{|A_{n,n}|}+|B_{b_n}|\aleph^j_n[b_n|\mathbb{G}_n]\Big]
\\&~+\frac{9}{\sqrt{|A_{n,n}|}\nu^4}\Big[|B_{2b_n}|^2\Big\| {X_{e}^n}\Big\|_{3}^3+|B_0|^2\Big\| {Y^{{sc},\eta_n}}\Big\|_{3}^3\Big]
+\|Y^{{sc},\eta_n}\|_2^2\frac{|A_n\triangle B_{b_n}A_n|}{\nu^3|A_{n,n}|}.
\end{split}\end{equation*}
where $\gamma_{x,\nu}=x+i\nu$.
\end{prop}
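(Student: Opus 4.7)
The plan is to follow the interpolation argument of the preceding proposition (for Theorem \ref{theorem_2}) essentially line by line, using the fact that a process generated by a group action is automatically distributionally invariant by Proposition \ref{eq_suis}. I would introduce $W_n(t):=\sqrt{t}\,W_n+\sqrt{1-t}\,Y^{{sc},\eta_n}$, now with $\eta_n$ taken to be the simpler map $\eta_n:a\mapsto\int_{A_{n,n}}E_n(X_e^n a X_g^n)\,d|g|$ from the statement of Theorem \ref{theorem_1}, set $g(t):=E_n(R(W_n(t),\gamma^{x,\nu}))$, and reduce matters to bounding $\int_0^1\|g'(t)\|_1\,dt$. Differentiating and decomposing $E_n[R_n^t(W_n/\sqrt{t})R_n^t]$ into the three terms $a_1^{\gamma_{x,\nu}},a_2^{\gamma_{x,\nu}},a_3^{\gamma_{x,\nu}}$ via the resolvent identity \eqref{ca_ira} proceeds exactly as in the preceding proposition, producing contributions bounded by $\mathcal{R}_n^s[b_n]$, $|B_{b_n}|\aleph_n^j[b_n|\G_n]$ and $|B_{2b_n}|^2/\sqrt{|A_{n,n}|}$ through the mixing and Cauchy--Schwarz steps. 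Distributional invariance immediately collapses $\sup_g\|X_g^n\|_p$ to $\|X_e^n\|_p$.

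The genuinely new step is the identification of the principal term with the simpler target $\eta_n$. For any $a\in\A_{\G}^{\rm{inv}}$ the identity $K_g(a)=a$ together with $E_n\circ K_g=E_n$ yields $E_n(X_g^n a X_{g'}^n)=E_n(X_e^n a X_{g^{-1}g'}^n)$, and the substitution $h:=g^{-1}g'$ (using left-invariance of the Haar measure) transforms the intermediate quantity
\begin{equation*}
\frac{1}{|A_{n,n}|}\int_{A_{n,n}}\int_{A_{n,n}\cap B(g,b_n)} E_n(X_g^n a X_{g'}^n)\,d|g'|\,d|g|
\end{equation*}
into $\int_{h\in B_{b_n}} E_n(X_e^n a X_h^n)\cdot |A_{n,n}\cap A_{n,n}h^{-1}|/|A_{n,n}|\,d|h|$. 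Comparing this against $\eta_n(a)=\int_{A_{n,n}}E_n(X_e^n a X_h^n)\,d|h|$ produces two errors: a tail piece (where $h\in A_{n,n}\setminus B_{b_n}$, bounded by $\|X_e^n\|_2^2\,\mathcal{R}_n^s[b_n]$ via $\aleph_n^s$) and a boundary piece coming from the weight $|A_{n,n}\cap A_{n,n}h^{-1}|/|A_{n,n}|$ differing from $\mathbb{I}(h\in A_{n,n})$, which a Fubini swap bounds by $\|X_e^n\|_2^2\cdot|A_{n,n}\triangle B_{b_n}A_{n,n}|/|A_{n,n}|$ via the F\"{o}lner property. An entirely parallel comparison between the natural semicircular output (built with a double-integral radius averaging $E_n(Y_g^n a Y_{g'}^n)$ over both coordinates of $A_{n,n}^2$ as in the proof of Proposition for Theorem \ref{theorem_2}) and the target $S\eta_n S$ with the simpler single-integral $\eta_n$ yields the additional F\"{o}lner contribution $\|Y^{{sc},\eta_n}\|_2^2\cdot|A_{n,n}\triangle B_{b_n}A_{n,n}|/(\nu^3|A_{n,n}|)$.

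The principal obstacle I anticipate is precisely the F\"{o}lner/Fubini bookkeeping for the boundary piece: a naive pointwise bound on $|A_{n,n}\triangle A_{n,n}h^{-1}|$ uniform in $h\in B_{b_n}$ would introduce a spurious $|B_{b_n}|$ factor, while the sharp estimate demands swapping the order of integration in order to identify the total mass of pairs $(g,h)\in A_{n,n}\times B_{b_n}$ with $gh\notin A_{n,n}$ directly with $|A_{n,n}\triangle B_{b_n}A_{n,n}|$, which in turn leans on the symmetry $B_{b_n}^{-1}=B_{b_n}$ implied by the left-invariance of the distance $d$. Once this is handled, collecting the $X$-side tail, $X$-side F\"{o}lner boundary, the $\aleph_n^j$ cross-mixing contribution, and the $Y$-side semicircular errors from the preceding proposition's treatment assembles into the stated bound.
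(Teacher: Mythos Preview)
Your proposal is correct and follows essentially the same approach as the paper: the interpolation $W_n(t)$, the decomposition into $a_1,a_2,a_3$, and the subsequent comparison via $c_1,c_2,c_3$ are reproduced verbatim from the Theorem~\ref{theorem_2} argument, with the only new content being the $c_3$ step where invariance $E_n(X_g^n a X_{g'}^n)=E_n(X_e^n a X_{g^{-1}g'}^n)$ and the change of variable $h=g^{-1}g'$ produce the F\"{o}lner boundary term $|A_{n,n}\triangle B_{b_n}A_{n,n}|/|A_{n,n}|$, exactly as you outline. Your anticipated obstacle (the Fubini swap to avoid a spurious $|B_{b_n}|$) is precisely the content of the paper's step (a) in bounding $c_3$, and the additional $\|Y^{sc,\eta_n}\|_2^2$ F\"{o}lner term indeed arises on the semicircular side in the analogue of \eqref{e_4}.
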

\begin{proof}
Our goal is to upper- bound $\Big\|S_n(\gamma_{x,\nu})-S_n^{sc}(\gamma_{x,\nu})\Big\|_{1}$; and to do this we  build a process $(W_n(t))$ that interpolates between $W_n$ and $Y_n$ a semi-circular operator with radius $\eta_n$. We define a function  
$g:[0,1]\rightarrow \A_{\G}^{\rm{inv}}$ that satisfies: $$\|g(1)-g(0)\|_{1}=\Big\|S_n(\gamma_{x,\nu})-S_n^{sc}(\gamma_{x,\nu})\Big\|_{1};$$ and subsequently prove that $g$ is differentiable and bound its derivative $g'(t)$.
\vspace{5mm}

\noindent To do this we first introduce some notations. For all operators $W$ we write  the resolvent as $R(W,\gamma)=[W-\gamma \mathbf{1}_{\mathcal{A}}]^{-1}$; and define the following averages $$W_n:=\frac{1}{\sqrt{|A_{n,n}|}}\int_{A_{n,n}} X_{g}^nd|g|,\qquad W_{g,b}^n:=\frac{1}{\sqrt{|A_{n,n}|}}\int_{A_{n,n}\setminus B(g,b)} X_{g'}^nd|g'|.$$  We let $(Y_g^n)$ be free copies of $Y_n$; and write $Y^{{sc},\eta_n}:=\frac{1}{\sqrt{|A_{n,n}|}}\int_{A_{n,n}}Y_g^n|d|g|.$ We remark that $Y^{{sc},\eta_n}$ has the same distribution than $Y^n$.  For all $t\in [0,1]$ we define the following interpolating processes \begin{equation}\begin{split}&W_n(t):=\sqrt{t}W_n+\sqrt{1-t} Y^{{sc},\eta_n},\qquad W_{g,b}^n(t):=\sqrt{t}W^n_{g,b}+\sqrt{1-t} Y^{{sc},\eta_n}
\\&\qquad \qquad \qquad W_g^{sc,\eta}(t):=\sqrt{t}W_n+ \frac{\sqrt{1-t}}{|A_{n,n}|}\int_{A_{n,n}\setminus B(g,0)}Y_g^nd|g|.\end{split}\end{equation} Finally for simplicity  we use the following shorthand notations: $$R_n^t:=R(W_n(t),\gamma_{x,\nu}),\qquad\quad R_b^{g,t}:=R(W^n_{g,b}(t),\gamma_{x,\nu}) .$$
The function $g:[0,1]\rightarrow \A_{\G}^{\rm{inv}}$ is defined as $g(t)=E_n(R_n^t)$.
\vspace{2mm}

\noindent We prove that $g$ is differentiable. For all $\epsilon>0$ and all  $t\in [0,1]$ such that $t+\epsilon\in [0,1]$ we have:
\begin{equation*}\begin{split}g(t+\epsilon)-g(t)&=E_n\Big(R_n^{t+\epsilon}-R_n^t\Big)
\\&\overset{(a)}{=}E_n\Big(R_n^{t+\epsilon}\Big[W_n(t)-W_n(t+\epsilon)\Big]R_n^t\Big)
\\&{=}E_n\Big(R_n^{t+\epsilon}\Big[\sqrt{t}-\sqrt{t+\epsilon}\Big]W_n R_n^t\Big)
\\&~+E_n\Big(R_n^{t+\epsilon}\Big[\sqrt{1-t}-\sqrt{1-(t+\epsilon)}\Big]Y^{{sc},\eta_n} R_n^t\Big), 
\end{split}\end{equation*} where to get (a) we exploited the fact that for all $W,W^*\in \mathcal{A}_{\tau}$ the following holds:\begin{equation}\label{ca_ira}R(W,\gamma_{x,\nu})-R(W^*,\gamma_{x,\nu})=R(W,\gamma_{x,\nu})[W^*-W]R(W^*,\gamma_{x,\nu}).\end{equation} This implies that $g$ is differentiable and satisfies $$g'(t)=-E_n\Big(R_n^t\Big[\frac{W_n}{2\sqrt{t}}-\frac{Y^{{sc},\eta_n}}{2\sqrt{1-t}}\Big] R_n^t\Big).$$ 
Therefore to upper-bound $\|S_n(\gamma_{x,\nu})-S_n^{sc}(\gamma_{x,\nu})\|_{1}$ it is sufficient to bound $\Big\|\int_0^1 g'(t)dt\Big\|_{1}$.  

\noindent To do so, we first remark that  for all $t\in (0,1)$ we have
\begin{equation*}\begin{split}E_n\Big[R_n^t\frac{W_n}{\sqrt{t}}R_n^t\Big]
&\overset{(a)}{=}\frac{1}{\sqrt{|A_{n,n}|}}\int_{A_{n,n}}E_n\Big[R_n^t\frac{X_{g}^n}{\sqrt{t}}R_n^t\Big]d|g|
\\&\overset{}{=}\frac{1}{\sqrt{|A_{n,n}|}}\int_{A_{n,n}}E_n\Big[R_n^t\frac{X_{g}^n}{\sqrt{t}}\big[R_n^t-R^{g,t}_{b_n}\big]\Big]d|g|
\\&\quad +\frac{1}{\sqrt{|A_{n,n}|}}\int_{A_{n,n}}E_n\Big[\big[R_n^t-R^{g,t}_{b_n}\big]\frac{X_{g}^n}{\sqrt{t}}R^{g,t}_{b_n}\Big]d|g|\\&\quad + \frac{1}{\sqrt{|A_{n,n}|}}\int_{A_{n,n}}E_n\Big[R^{g,t}_{b_n}\frac{X_{g}^n}{\sqrt{t}}R^{g,t}_{b_n}\Big]d|g|
\\&\overset{(b)}{=}\frac{-1}{\sqrt{|A_{n,n}|}}\int_{A_{n,n}}E_n\Big[R_n^t\big[W_n-W_{g,b_n}^n\big]R^{g,t}_{b_n}X_g^nR_n^t\Big]d|g|
\\&\quad - 
\frac{1}{\sqrt{|A_{n,n}|}}\int_{A_{n,n}}E_n\Big[R^{g,t}_{b_n}X_{g}^nR_n^t\big[W_n-W_{g,b_n}^n\big]R^{g,t}_{b_n}\Big]d|g|
\\&\quad+ \frac{1}{\sqrt{|A_{n,n}|}}\int_{A_{n,n}}E_n\Big[R^{g,t}_{b_n}\frac{X_{g}^n}{\sqrt{t}}R^{g,t}_{b_n}\Big]d|g|\\&=a_1^{\gamma_{x,\nu}}+a_2^{\gamma_{x,\nu}}+a_3^{\gamma_{x,\nu}}
\end{split}\end{equation*}
where (a) comes from the linearity of the functional $E_n$ and where to get (b) we used \cref{ca_ira}.
~The rest of proof  consist in: (i) proving that $\big\|a_3^{\gamma_{x,\nu}}\big\|\rightarrow 0$ and (ii) in re-expressing $a_1^{\gamma_{x,\nu}}$ and $a_2^{\gamma_{x,\nu}}$. 

\noindent We start by proving that $\big\|a_3^{\gamma_{x,\nu}}\big\|\rightarrow 0$. Using the definition of the free mixing coefficients $(\aleph^s_n[\cdot|\mathbb{G}_n])$ we have:
\begin{equation*}\begin{split}&\Big\|E_n\Big[R^{g,t}_{b_n}\frac{X_{g}^n}{\sqrt{t}}R^{g,t}_{b_n}\Big]\Big\|_{1}
\\&\le\sum_{b\ge b_n} \Big\|E_n\Big[R^{g,t}_{b}\frac{X_{g}^n}{\sqrt{t}}\big[R^{g,t}_{b+1}-R^{g,t}_{b}\big]\Big]\Big\|_{1}
\\&\quad+\sum_{b\ge b_n} \Big\|E_n\Big[\big[R^{g,t}_{b+1}-R^{g,t}_{b}\big]\frac{X_{g}^n}{\sqrt{t}}R^{g,t}_{b+1}\Big]\Big\|_{1}
\\&\overset{(a)}{\le} \sum_{b\ge b_n} \Big\|E_n\Big[R^{g,t}_{b}{X_{g}^n}R^{g,t}_{b+1}\big[W_{g,b+1}^n-W_{g,b}^n\big]R^{g,t}_{b}\big]\Big]\Big
\|_{1}
\\&\quad +\sum_{b\ge b_n} \Big\|E_n\Big[R^{g,t}_{b+1} \big[W_{g,b+1}^n-W_{g,b}^n\big]R^{g,t}_{b}{X_{g}^n}R^{g,t}_{b+1}\Big]\Big
\|_{1}
\\&\overset{(b)}{\le}\frac{2}{\sqrt{|A_{n,n}|}} \Big\|{X_{e}^n}\Big\|^2_{2}\sum_{b\ge b_n}\big\|R^n_{e,b}(t)\big\|_{\infty}^3 |B_{b+1}\setminus B_b|\aleph^s_n[b|\mathbb{G}_n].
\end{split}\end{equation*} where (a) comes from the triangular inequality and where to get (b) we used the definition of $\aleph_n^s[\cdot|\G_n]$ in \cref{suis_suisse} coupled with the fact that $E_n({X_{e}^n})=0$.
Therefore by taking the average over $g\in A_{n,n}$ we obtain that 
\begin{equation}\begin{split}\label{eq*_1}\big|a_3^{\gamma_{x,\nu}}|&\le \frac{2\Big\|{X_{e}^n}\Big\|^2_{2} \mathcal{R}^s_{n}[b_n]}{\nu^3}\rightarrow 0.
\end{split}\end{equation}

\noindent The next step consists in re-expressing $a_1^{\gamma_{x,\nu}}$ and $a_2^{\gamma_{x,\nu}}$ in terms of $R^{g,t}_{2b_n}$ and $X_g^n$.  
Using \cref{ca_ira} for all $g\in A_{n,n}$ we have
\begin{equation}\begin{split}&\label{eq_1}
\Big\|E_n\Big[\big[R_n^t-R^{g,t}_{2b_n}\big]\big[W_n-W_{g,b_n}^n\big]R^{g,t}_{b_n}X_g^nR_n^t\Big]\Big\|_{1}
\\&\le \Big\|E_n\Big[ R_n^t\big[W_n-W_{g,2b_n}\big]R^{g,t}_{2b_n}\big[W_n-W_{g,b_n}^n\big]R^{g,t}_{b_n} X_g^n R_n^t\Big]\Big\|_{1}
\\&\overset{(a)}{\le}\frac{ {|B_{2b_n}|^2}}{|A_{n,n}|\nu^4}\Big\| {X_{e}^n}\Big\|_{3}^3 
\end{split}\end{equation}where (a) comes from the Holder-inequality combined with the fact that $\big|B(g,2b_n)\big|\le |B_{2b_n}|$.
Similarly  the following also holds:
\begin{equation}\begin{split}&\label{eq_2}
\Big\|E_n\Big[R^{g,t}_{2b_n}\big[W_n-W_{g,b_n}^n\big]R^{g,t}_{b_n}X_g^n \big[R_n^t-R^{g,t}_{2b_n}\big]\Big]\Big\|_{1}
{\le}\frac{ {|B_{2b_n}|^2}}{|A_{n,n}|\nu^4}\Big\| {X_{e}^n}\Big\|_{3}^3; 
\end{split}\end{equation}
 as well as
\begin{equation}\begin{split}&\label{eq_3}
\Big\|E_n\Big[R^{g,t}_{2b_n}\big[W_n-W_{g,b_n}^n\big]\big[R^{g,t}_{b_n}-R^{g,t}_{2b_n}\big] X_g^nR^{g,t}_{2b_n}\Big]\Big\|_{1}
{\le}\frac{ {|B_{2b_n}|^2}}{|A_{n,n}|\nu^4}\Big\| {X_{e}^n}\Big\|_{3}^3. 
\end{split}\end{equation}

\noindent Therefore by averaging over $g\in A_{n,n}$ the results of \cref{eq_1}, \cref{eq_2} and \cref{eq_3} we obtain that: \begin{equation}\begin{split}&\label{eq*_2}\Big|a_1^{\gamma_{x,\nu}}+\frac{1}{{|A_{n,n}|}}\int_{A_{n,n}}\int_{ B(g,b_n)}\hspace{-5mm} E_n\Big[R^{g,t}_{2b_n}X_{g'}^nR^{g,t}_{2b_n}X_g^nR^{g,t}_{2b_n}\Big]d|g'|d|g| \Big| \\&\le \frac{3|B_{2b_n}|^2}{\sqrt{|A_{n,n}|}\nu^4}\Big\| {X_{e}^n}\Big\|_{3}^3;
\end{split}\end{equation} and similarly we obtain that:
\begin{equation}\begin{split}&\label{eq*_3}\Big|a_2^{\gamma_{x,\nu}}+\frac{1}{{|A_{n,n}|}}\int_{A_{n,n}}\int_{ B(g,b_n)}\hspace{-5mm}E_n\Big[R^{g,t}_{2b_n}X_g^nR^{g,t}_{2b_n}X^n_{g'}R^{g,t}_{2b_n}\Big]d|g'|d|g|\Big| \\&\le \frac{3|B_{2b_n}|^2}{\sqrt{|A_{n,n}|}\nu^4}\Big\| {X_{e}^n}\Big\|_{3}^3.
\end{split}\end{equation}

Therefore combined together \cref{eq*_1}, \cref{eq*_2} and \cref{eq*_3} imply that 
\begin{equation*}\begin{split}&\Big\|g(1)-g(0)\Big\|_{1}\\&\le\int_0^1\Big\|\frac{1}{{|A_{n,n}|}}\int_{A_{n,n}}\int_{ B(g,b_n)}\hspace{-5mm}E_n\Big[R^{g,t}_{2b_n}X_g^nR^{g,t}_{2b_n}X^n_{g'}R^{g,t}_{2b_n}\Big]d|g'|d|g|
\\&\qquad \qquad-E_n\Big[R_n^t\frac{Y^{{sc},\eta_n}}{2\sqrt{1-t}}R_n^t\Big]~dt\Big\|_{1}
+\frac{6|B_{2b_n}|^2}{\sqrt{|A_{n,n}|}\nu^4}\Big\| {X_{e}^n}\Big\|_{3}^3+\frac{2\Big\|{X_{e}^n}\Big\|^2_{2} \mathcal{R}^s_{n}[b_n]}{\nu^3}.
\end{split}\end{equation*}
Moreover 
if we use the following shorthand notation $R^{sc,\eta}_{g,t}:=R(W^{n,sc}_g(t),\gamma_{x,\nu})$ by exploiting the free independence of $(Y_g^n)$ we obtain for all $t\in (0,1)$ that:
\begin{equation*}\begin{split}&\Big\|\frac{1}{{|A_{n,n}|}}\int_{A_{n,n}}E_n\Big[R^{sc,\eta}_{g,t}Y_g^nR^{sc,\eta}_{g,t}Y_g^nR^{sc,\eta}_{g,t}\Big]d|g|
-E_n\Big[R_n^t\frac{Y^{{sc},\eta_n}}{2\sqrt{1-t}}R_n^t\Big]~dt\Big\|_{1}
\\&\le\frac{6|B_0|^2}{\sqrt{|A_{n,n}|}\nu^4}\Big\| {Y^{{sc},\eta_n}}\Big\|_{3}^3.
\end{split}\end{equation*}

\noindent Therefore to finish upper bounding $\|g(1)-g(0)\|_{1}$ we need to compare 
$$\frac{1}{{|A_{n,n}|}}\int_{A_{n,n}}\int_{ B(g,b_n)}E_n\Big[R^{g,t}_{2b_n}X_g^nR^{g,t}_{2b_n}X^n_{g'}R^{g,t}_{2b_n}\Big]d|g'|d|g|$$ $$\rm{with}\qquad\frac{1}{{|A_{n,n}|}}\int_{A_{n,n}}E_n\Big[R^{sc,\eta}_{g,t}Y_g^nR^{sc,\eta}_{g,t}Y_g^nR^{sc,\eta}_{g,t}\Big]d|g|.$$

\noindent To do so we introduce the following notations  for all $t\in [0,1]$ $$\eta_{g,g'}:a\rightarrow E_n\Big[X_g^n aX^n_{g'}\Big],\qquad S_{g,t}^{2b_n}:=E_n\big[R^{g,t}_{2b_n}\big].$$ Using the triangular inequality we obtain that
\begin{equation*}\begin{split}&\Big\|\frac{1}{|A_{n,n}|}\int_{A_{n,n}}\int_{B(g,b_n)}\hspace{-5mm}E_n\Big[R^{g,t}_{2b_n}X_g^nR^{g,t}_{2b_n}X^n_{g'}R^{g,t}_{2b_n}\Big]d|g'|d|g|
 -S_{n,t}^{0}\eta_n(S^0_{n,t})S_{n,t}^{0} \Big\|_{1}
\\&\le \Big\|\frac{1}{{|A_{n,n}|}}\int_{A_{n,n}}\int_{ B(g,b_n)}E_n\Big[R^{g,t}_{2b_n}X_g^nR^{g,t}_{2b_n}X^n_{g'}R^{g,t}_{2b_n}\Big]- S_{g,t}^{2b_n}\eta_{g,g'}(S_{g,t}^{2b_n})S_{g,t}^{2b_n}d|g'|d|g|  \Big\|_{1}
\\&\quad+ \Big\|\frac{1}{{|A_{n,n}|}}\int_{A_{n,n}}\int_{ B(g,b_n)} S_{g,t}^{2b_n}\eta_{g,g'}(S_{g,t}^{2b_n})S_{g,t}^{2b_n}
- S_{n,t}^{0}\eta_{g,g'}(S_{n,t}^0)S_{n,t}^{0}d|g'|d|g| \Big\|_{1}
\\&\quad+ \Big\|\frac{1}{{|A_{n,n}|}}\int_{A_{n,n}}\int_{ B(g,b_n)} S_{n,t}^{0}\eta_{g,g'}(S_{n,t}^0)S_{n,t}^{0}d|g'|d|g| 
 -S_{n,t}^{0}\eta_n(S^0_{n,t})S_{n,t}^{0} \Big\|_{1}
\\&\le (c_1^{\gamma_{x,\nu}})+(c_2^{\gamma_{x,\nu}})+(c_3^{\gamma_{x,\nu}})
\end{split}\end{equation*}
We bound each term successively. The first term is bounded using the definition of $(\aleph^j_n[\cdot|\G_n])$, and the later terms are bounded using  the triangular inequality and \cref{ca_ira}. We focus first on the term $c_1^{\gamma_{x,\nu}}$, we obtain that
\begin{equation}\begin{split}&\label{e}
\big|(c_1^{\gamma_{x,\nu}})\big|\\&\le \Big\|X_{e}^n\Big\|^2_{2}\big\|R^{g,t}_{2b_n}\big\|_{\infty}^3\frac{1}{|A_{n,n}|}\int_{A_{n,n}}\int_{B(g,b_n)}\aleph^j_n[\bar{d}(\{g,g'\}, A_{n,n}\setminus B(g,2b_n))|\mathbb{G}_n]d|g'|d|g|
\\&\le\frac{\Big\|X_{e}^n\Big\|^2_{2}|B_{b_n}|\aleph^j_n[b_n|\mathbb{G}_n]}{\nu^3}.
\end{split}\end{equation} 
To bound $(c_2^{\gamma_{x,\nu}})$ we use the triangular inequality and obtain that 
\begin{equation*}\begin{split}&
\Big\|S_{g,t}^{2b_n}\eta_{g,g'}(S_{g,t}^{2b_n})
S_{g,t}^{2b_n} -S_{n,t}^{0}\eta_{g,g'}(S_{n,t}^0)
S_{n,t}^{0}\Big\|_{1}
\\&\overset{}{\le} 
\Big\|\Big[S_{g,t}^{2b_n}-S_{n,t}^{0}\Big]\eta_{g,g'}(S_{g,t}^{2b_n})
S_{g,t}^{2b_n}\Big\|_{1}
+\Big\|S_{n,t}^{0}\Big[\eta_{g,g'}(S_{g,t}^{2b_n})-\eta_{g,g'}(S_{n,t}^0)\Big]
S_{g,t}^{2b_n}\Big\|_{1}
\\&\quad +\Big\|S_{n,t}^{0}\eta_{g,g'}(S_{n,t}^0)
\Big[S_{g,t}^{2b_n}-S_{n,t}^{0}\Big] \Big\|_{1}
\\&\overset{(a)}{\le} \Big\|E_n\Big[R_n^{g,2b_n}(t)\big[W_n-W_{g,2b_n}^n(t)\big]R_n^t\Big]\eta_{g,g'}(S_{g,t}^{2b_n})
S_{g,t}^{2b_n} \Big\|_{1}
\\&\quad+\Big\|S_{n,t}^{0}E_n\Big[X_gE_n\Big[R_n^{g,2b_n}(t)\big[W_n-W_{g,2b_n}^n(t)\big]R_n^t\Big]X_{g'}\Big]S_{g,t}^{2b_n}\Big\|_{1}
\\&\quad +\Big\|S_{n,t}^{0}\eta_{g,g'}(S_{n,t}^0)
E_n\Big[R_n^{g,2b_n}(t)\big[W_n-W_{g,2b_n}^n(t)\big]R_n^t\Big]\Big\|_{1}
\\&\overset{(b)}{\le}\frac{3|B_{2b_n}|}{\sqrt{|A_{n,n}|}\nu^4}\Big\|X_{e}^n\Big\|_{3}^3.
\end{split}\end{equation*} where to get (a) we used \cref{ca_ira}; and  (b) was obtained by using the Cauchy-Swartz inequality. Therefore by averaging over $g\in A_{n,n}$ and $g'\in B(g,b_n)$ we obtain that
\begin{equation}\begin{split}\label{e_2}
(c_2^{\gamma_{x,\nu}})
&\le\frac{3|B_{2b_n}|^2}{\sqrt{|A_{n,n}|}\nu^4}\Big\|X_{e}^n\Big\|_{3}^3.\end{split}\end{equation} 

\noindent Finally by the triangular inequality we remark that 
\begin{equation}\begin{split}\label{e_3}|(c_3^{\gamma_{x,\nu}})|\le&
 \Big\|\frac{1}{{|A_{n,n}|}}\int_{A_{n,n}}\int_{ B(g,b_n)} S_{n,t}^{0}\eta_{g,g'}(S_{n,t}^0)S_{n,t}^{0}d|g'|d|g| 
 -S_{n,t}^{0}\eta_n(S^0_{n,t})S_{n,t}^{0} \Big\|_{1}
\\&\le \Big\|\frac{1}{|A_{n,n}|}\int_{A_{n,n}}\int_{ B(g,b_n)}S_{n,t}^{0}\eta_{g,g'}(S_{n,t}^0)S_{n,t}^{0}d|g'|d|g|-\int_{ B(e,b_n)}\hspace{-2mm}S_{n,t}^{0}\eta_{e,g,0}^2S_{n,t}^{0}d|g|\Big\|_{1}
\\&\quad+\Big\|\int_{ B(e,b_n)}\hspace{-2mm}S_{n,t}^{0}\eta_{e,g,0}^2S_{n,t}^{0}d|g|-\int_{A_{n,n}}S_{n,t}^{0}\eta_{e,g,0}^2S_{n,t}^{0}d|g|\Big\|_{1}
\\&\overset{(a)}{\le}\big\|X_{e}^n\big\|_{2}^2 \frac{|A_n\triangle B_{b_n}A_n|}{\nu^3|A_{n,n}|}+\frac{\big\|X_{e}^n\big\|_{2}^2}{\nu^3}  \mathcal{R}^s_n[b_n].
\end{split}\end{equation}  where to obtain (a) we exploited the Cauchy-Swartz inequality. 
Therefore by combining \cref{e}, \cref{e_2} and \cref{e_3} we obtain that
\begin{equation}\begin{split}&\label{e_5}\Big\|g(1)-g(0)\Big\|_{1}\\&\le\int_0^1\Big\|S_{n,t}^{0}\eta_n(S^0_{n,t})S_{n,t}^{0}-\frac{1}{{|A_{n,n}|}}\int_{A_{n,n}}E_n\Big[R^{sc,\eta}_{g,t}Y_g^nR^{sc,\eta}_{g,t}Y_g^nR^{sc,\eta}_{g,t}\Big]d|g|\Big\|_{1}~dt
\\&+ \frac{\Big\|{X_{e}^n}\Big\|^2_{2} }{\nu^3}\Big[3\mathcal{R}^s_{n}[b_n]+\frac{|A_n\triangle B_{b_n}A_n|}{|A_{n,n}|}+|B_{b_n}|\aleph^j_n[b_n|\mathbb{G}_n]\Big]
\\&~+\frac{1}{\sqrt{|A_{n,n}|}\nu^4}\Big[9|B_{2b_n}|^2\Big\| {X_{e}^n}\Big\|_{3}^3+6|B_0|^2\Big\| {Y^{{sc},\eta_n}}\Big\|_{3}^3\Big].
\end{split}\end{equation}
Moreover following similar arguments we prove that 
\begin{equation}\begin{split}\label{e_4}&\int_0^1\Big\|S_{n,t}^{0}\eta_n(S^0_{n,t})S_{n,t}^{0}-\frac{1}{{|A_{n,n}|}}\int_{A_{n,n}}E_n\Big[R^{sc,\eta}_{g,t}Y_g^nR^{sc,\eta}_{g,t}Y_g^nR^{sc,\eta}_{g,t}\Big]d|g|\Big\|_{1}~dt
\\&\le\frac{3\|Y^{{sc},\eta_n}\|_3^3}{\sqrt{|A_{n,n}|}\nu^4}+\|Y^{{sc},\eta_n}\|_2^2\frac{|A_n\triangle B_{b_n}A_n|}{\nu^3|A_{n,n}|}
.
\end{split}\end{equation}

Therefore by combining \cref{e_5} and \cref{e_4} we finally obtain that:
\begin{equation*}\begin{split}&\Big\|g(1)-g(0)\Big\|_{1}\\&\le \frac{\Big\|{X_{e}^n}\Big\|^2_{2} }{\nu^3}\Big[3\mathcal{R}^s_{n}[b_n]+\frac{|A_n\triangle B_{b_n}A_n|}{|A_{n,n}|}+|B_{b_n}|\aleph^j_n[b_n|\mathbb{G}_n]\Big]
\\&~+\frac{9}{\sqrt{|A_{n,n}|}\nu^4}\Big[|B_{2b_n}|^2\Big\| {X_{e}^n}\Big\|_{3}^3+|B_0|^2\Big\| {Y^{{sc},\eta_n}}\Big\|_{3}^3\Big]
+\|Y^{{sc},\eta_n}\|_2^2\frac{|A_n\triangle B_{b_n}A_n|}{\nu^3|A_{n,n}|}.
\end{split}\end{equation*}
~
\end{proof}

\subsection{Proof of \cref{theorem_3} and \cref{theorem_4}}
We first prove an intermediary result that we use to deduce \cref{theorem_3} and \cref{theorem_4}. In this goal given a completely positive function: $\eta:\A\rightarrow \A^{\rm{inv}}_{\mathcal{D}(\G_n^{k_n})}$ we write $$\Big\|\eta(\cdot)-\eta_n(\cdot)\big|_{\mathcal{A}^{\rm{inv}}_{\mathcal{D}(\G_n^{k_n})}}\Big\|_{\rm{op}}:=\sup_{\substack{a\in\mathcal{A}^{\rm{inv}}_{\mathcal{D}(\G_n^{k_n})}\\\|a\|_{\infty}\le 1}}\Big\|\eta(a)-\eta_n(a)\Big\|_{1}.$$
\begin{prop}\label{bartlett_2}Let $(X^n_{\g})$ be a triangle array of free random variables satisfying all the conditions of \cref{theorem_3}. Let $(\eta_n')$ be a sequence of completely positive maps. Let $S_n(\cdot)$ denote the (operator-valued) Stiejles transform of \\$\frac{1}{|A_{n,n}|^{k_n-\frac{1}{2}}}\int_{\A_{n,n}^{k_n}}X_{\g}d|\g|$, and $S_n^{sc}(\cdot)$ the Stiejles transform of the operator valued semi-circular operator $Y^{sc,\eta'_n}$ with radius $\eta'_n(\cdot)$. 
The following upper bound holds
\begin{equation*}\begin{split}&\Big\|S_n(\gamma^{x,\nu})-S_n^{sc}(\gamma^{x,\nu})\Big\|_{1}
\\&\le
\frac{{12|B_{0}|^2}\Big\|Y^{sc,\eta'_n}\Big\|_{3}^3+{18k_n^4|B_{2b_n}|^2}\sup_{\g\in \G^{k_n}}\Big\| {X_{\g}^n}\Big\|_{3}^3}{\sqrt{|A_{n,n}|}\nu^4}
+\frac{3k_n^2\sup_{\g\in \G^{k_n}}\Big\|{X_{\g}^n}\Big\|^2_{2} \mathcal{R}^s_{n}[b_n]}{\nu^3}
\\&~+\frac{1}{\nu^3} \Big\|\eta_n'(\cdot)-\eta_n(\cdot)\big|_{\mathcal{A}^{\rm{inv}}_{\mathcal{D}(\G_n^{k_n})}}\Big\|_{\rm{op}}+\frac{ k_n^2\sup_{\g\in \G^{k_n}}\Big\|X_{\g}^n\Big\|^2_{2}|B_{b_n}|\aleph_n^{*,j}[b_n|\mathbb{G}^{k_n}_n]}{\nu^3}.
\end{split}\end{equation*}
where $\gamma_{x,\nu}=x+i\nu$.
\end{prop}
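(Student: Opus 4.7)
\textbf{Proof proposal for \cref{bartlett_2}.}

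The plan is to mirror the interpolation argument already used for \cref{bartlett} (the proof of \cref{theorem_2} and \cref{theorem_1}), but working over the product group $\G_n^{k_n}$ with the pseudo-distance $d_{k_n}$ and with a generic semi-circular target of radius $\eta'_n$ rather than the canonical $\eta_n$. Let $(Y_\g^n)_{\g \in \G_n^{k_n}}$ be free copies of a semi-circular element of radius $\eta'_n$, and set $Y^{sc,\eta'_n} := \frac{1}{|A_{n,n}|^{k_n-\frac12}}\int_{A_{n,n}^{k_n}}Y_\g^n d|\g|$. For $b\ge 0$ and $\g \in \G_n^{k_n}$ introduce the excised average $W^n_{\g,b}:=\frac{1}{|A_{n,n}|^{k_n-\frac12}}\int_{A_{n,n}^{k_n}\setminus \overline{B}_{k_n}(\g,b)}X^n_{\g'}d|\g'|$ and the interpolations $W_n(t):=\sqrt{t}W_n+\sqrt{1-t}Y^{sc,\eta'_n}$ and $W^n_{\g,b}(t):=\sqrt{t}W^n_{\g,b}+\sqrt{1-t}Y^{sc,\eta'_n}$. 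Writing $R_n^t$ and $R_b^{\g,t}$ for the resolvents at $\gamma^{x,\nu}$, define $g(t):=E_{\mathcal{D}_n}(R_n^t)$; then $\|g(1)-g(0)\|_1 = \|S_n(\gamma^{x,\nu})-S_n^{sc}(\gamma^{x,\nu})\|_1$.

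By the resolvent identity \cref{ca_ira}, $g$ is differentiable with $g'(t)=-E_{\mathcal{D}_n}\bigl(R_n^t[\tfrac{W_n}{2\sqrt{t}}-\tfrac{Y^{sc,\eta'_n}}{2\sqrt{1-t}}]R_n^t\bigr)$, so it suffices to bound $\int_0^1 g'(t)dt$. I would expand $E_{\mathcal{D}_n}(R_n^t \tfrac{W_n}{\sqrt{t}}R_n^t)$ exactly as in the single-group case: writing the integral over $\g \in A_{n,n}^{k_n}$, insert $R_n^t = R_{b_n}^{\g,t} + (R_n^t - R_{b_n}^{\g,t})$ on each side and apply \cref{ca_ira} once to generate three pieces $a_1^{\gamma_{x,\nu}}$, $a_2^{\gamma_{x,\nu}}$, $a_3^{\gamma_{x,\nu}}$, where $a_3$ has $R_{b_n}^{\g,t}$ on both sides. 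A telescoping argument over $b\ge b_n$ combined with the mixing bound $\aleph_n^{*,s}[b|\G_n^{k_n}]$ (plus $E_{\mathcal{D}_n}(X_\g^n)=0$) then controls $a_3^{\gamma_{x,\nu}}$ by $\frac{k_n^2 \sup_\g\|X_\g^n\|_2^2\, \mathcal{R}^s_n[b_n]}{\nu^3}$; the quadratic factor $k_n^2$ arises because the pseudo-annulus at distance $b$ has volume bounded by $k_n^2|B_{b+1}\setminus B_b||A_{n,n}|^{k_n-1}$, normalized by $|A_{n,n}|^{k_n}$.

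For $a_1^{\gamma_{x,\nu}}$ and $a_2^{\gamma_{x,\nu}}$, two further applications of \cref{ca_ira} reduce each one to $\frac{1}{|A_{n,n}|^{2k_n-1}}\int_{A_{n,n}^{k_n}}\int_{\overline{B}_{k_n}(\g,b_n)\cap A_{n,n}^{k_n}}E_{\mathcal{D}_n}[R^{\g,t}_{2b_n}X^n_\g R^{\g,t}_{2b_n}X^n_{\g'}R^{\g,t}_{2b_n}]d|\g'|d|\g|$ plus an error of order $\frac{k_n^4|B_{2b_n}|^2}{\sqrt{|A_{n,n}|}\nu^4}\sup_\g\|X_\g^n\|_3^3$; Hölder explains the third moment, and the extra $k_n^4$ is the square of the $k_n^2$ above, because both resolvent swaps live in $\G_n^{k_n}$. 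I would then use the definition of $\aleph_n^{*,j}$ to replace $R^{\g,t}_{2b_n}$ by $S_{n,t}^0 := E_{\mathcal{D}_n}(R_n^t)$ in the inner position, at the cost of $\frac{k_n^2\sup_\g\|X_\g^n\|_2^2|B_{b_n}|\aleph_n^{*,j}[b_n|\G_n^{k_n}]}{\nu^3}$, and then recognise $\frac{1}{|A_{n,n}|^{2k_n}}\int_{A_{n,n}^{2k_n}}E_{\mathcal{D}_n}(X^n_\g \cdot X^n_{\g'})d|\g|d|\g'|=\eta_n$ acting on $S_{n,t}^0$. On the semicircular side, freeness of $(Y^n_\g)$ makes the analogous computation collapse exactly to $S_{n,t}^0 \eta'_n(S_{n,t}^0) S_{n,t}^0$ up to an error of order $\frac{|B_0|^2\|Y^{sc,\eta'_n}\|_3^3}{\sqrt{|A_{n,n}|}\nu^4}$. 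Pairing the two sides yields a term $\|\eta_n(S_{n,t}^0)-\eta'_n(S_{n,t}^0)\|_1$, which since $S_{n,t}^0 \in \A^{\rm{inv}}_{\mathcal{D}(\G_n^{k_n})}$ and $\|S_{n,t}^0\|_\infty\le \nu^{-1}$, is bounded by $\nu^{-1}\|\eta'_n-\eta_n|_{\A^{\rm{inv}}_{\mathcal{D}(\G_n^{k_n})}}\|_{\rm{op}}$. Combined with the $\nu^{-2}$ from the two outer resolvents this yields the announced $\nu^{-3}$ contribution.

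The main obstacle will be the bookkeeping of the combinatorial factors $k_n, k_n^2, k_n^4$ coming from the pseudo-distance $d_{k_n}$: volumes of pseudo-balls in $\G_n^{k_n}$ grow with $k_n$ because a single close pair $(\g_i,\g'_j)$ among $k_n^2$ possibilities is enough to violate $\bar d_{k_n}(\{\g\},\{\g'\})\ge b$, and we apply this estimate twice. The second delicate point is that the target radius $\eta'_n$ is arbitrary, so the $\eta'_n$ vs $\eta_n$ discrepancy has to be introduced only at the last step and only against elements of $\A^{\rm{inv}}_{\mathcal{D}(\G_n^{k_n})}$, which is why the operator norm used in the bound is restricted to that invariant subalgebra. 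Everything else is a direct translation of the $k=1$ argument, with $\aleph$ replaced by $\aleph^*$ and $B_b$ replaced by the pseudo-ball of radius $b$ in $\G_n^{k_n}$.
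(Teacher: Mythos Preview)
Your outline follows the paper's interpolation argument closely and the bookkeeping of the $k_n^2$ and $k_n^4$ factors is correct, but there is one genuine gap.

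When you reduce $a_1^{\gamma_{x,\nu}}$ and $a_2^{\gamma_{x,\nu}}$ to
\[
\frac{1}{|A_{n,n}|^{2k_n-1}}\int_{A_{n,n}^{k_n}}\int_{\overline{B}_{k_n}(\g,b_n)}E_{\mathcal{D}_n}\bigl[R^{\g,t}_{2b_n}X^n_\g R^{\g,t}_{2b_n}X^n_{\g'}R^{\g,t}_{2b_n}\bigr]\,d|\g'|\,d|\g|
\]
and then invoke $\aleph_n^{*,j}[b_n|\G_n^{k_n}]$, you implicitly use that $\bar d_{k_n}\bigl(\{\g,\g'\},\,A_{n,n}^{k_n}\setminus\overline{B}_{k_n}(\g,2b_n)\bigr)\ge b_n$. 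In the $k=1$ case this is a genuine triangle inequality for the metric $d$ on $\G_n$. For $k_n\ge 2$, however, $d_{k_n}(\g,\g')=\min_{i,j}d(\g_i,\g'_j)$ is only a pseudo-distance and does \emph{not} satisfy a triangle inequality: $\g'\in\overline{B}_{k_n}(\g,b_n)$ only says that \emph{one} pair of coordinates of $\g,\g'$ is close, and places no constraint on the remaining coordinates of $\g'$. Concretely, one can have $d_{k_n}(\g,\g')\le b_n$, $d_{k_n}(\g,\g'')>2b_n$, yet $d_{k_n}(\g',\g'')=0$. Hence the support of $R^{\g,t}_{2b_n}$ need not be at pseudo-distance $\ge b_n$ from $\g'$, and the $\aleph_n^{*,j}$ bound does not apply.

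The paper's fix is to replace the single-index excised resolvent by the double-index one
\[
R^{\g,\g',t}_{2b_n}:=R\bigl(W_n(t)-\tfrac{\sqrt{t}}{|A_{n,n}|^{k_n-1/2}}\!\int_{\overline{B}_{k_n}(\g,2b_n)\cup\overline{B}_{k_n}(\g',2b_n)}\!X^n_{\g''}\,d|\g''|,\;\gamma_{x,\nu}\bigr),
\]
which removes balls around \emph{both} $\g$ and $\g'$; then $\bar d_{k_n}(\{\g,\g'\},\cdot)\ge 2b_n$ is immediate from the definition. Since $\bigl|\overline{B}_{k_n}(\g,2b_n)\cup\overline{B}_{k_n}(\g',2b_n)\bigr|\le 2k_n^2|B_{2b_n}||A_{n,n}|^{k_n-1}$, the H\"older error terms only pick up an extra factor of $2$ and your $k_n^4|B_{2b_n}|^2$ accounting is unchanged.

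A secondary point: the paper realises $Y^{sc,\eta'_n}$ as a one-dimensional free average $\frac{1}{\sqrt{|A_{n,n}|}}\int_{A_{n,n}}Y^n_g\,d|g|$ over $\G_n$, not over $\G_n^{k_n}$ as you propose. Distributionally both give a semicircular of radius $\eta'_n$, but the one-dimensional choice avoids any $k_n$ factors on the semicircular error term, yielding the clean $|B_0|^2\|Y^{sc,\eta'_n}\|_3^3$ in the final bound; your setup would produce an unnecessary $k_n^4|B_0|^2$ there.
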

\begin{proof} 
The proof of \cref{bartlett_2}, as the proof of \cref{theorem_2}, adapts the Linderberg method; and to do so it creates an operator $W_n(t)$ that interpolates between $W_n$ and $Y^{{sc},\eta'_n}$.

\noindent In this goal we introduce some notations. For all operators $W$ we write $R(W,\gamma)=[W-\gamma \mathbf{1}_{\mathcal{A}}]^{-1}$; and define
$$W_n:=\frac{1}{|A_{n,n}|^{k_n-\frac{1}{2}}}\int_{A_{n,n}^{k_n}} X_{\g}^nd|\g|,\qquad W_{\g,b}^n:=W_n- \frac{1}{|A_{n,n}|^{k_n-\frac{1}{2}}}\int_{ \overline{B}_{k_n}(\g,b)} X_{\g'}^nd|\g'|.$$  Let $Y^n$ be a free operator-valued semi-circular operator with radius $\eta'_n$. Let $(Y^n_{g})$ be free copies of $Y^n$. We write $Y^{{sc},\eta'_n}:=\frac{1}{\sqrt{|A_{n,n}|}}\int_{A_{n,n}}Y^n_{g}|d|g|$; we note that the average in $Y^{{sc},\eta'_n}$ is taken over $\G$ and not $\G^{k_n}$. For all $t\in [0,1]$ we define the following interpolating processes.
\begin{equation*}\begin{split}&W_n(t):=\sqrt{t}W_n+\sqrt{1-t} Y^{{sc},\eta'_n},\qquad W_{\g,b}^n(t):=\sqrt{t}W^n_{\g,b}+\sqrt{1-t} Y^{{sc},\eta'_n}
\\&\qquad \qquad \qquad W_{g}^{sc,\eta}(t):=\sqrt{t}W_n+ \frac{\sqrt{1-t}}{|A_{n,n}|}\int_{A_{n,n}\setminus B(g,0)}Y^n_gd|g|.\end{split}\end{equation*}

As in the proof of \cref{theorem_2}, $W_n(t)$ interpolates betewen $W_n$ and $Y^{{sc},\eta'_n}$.  For simplicity  we use the following shorthand notations: $$R_n^t:=R(W_n(t),\gamma_{x,\nu}),\qquad\quad R^{\g,t}_b:=R(W^n_{\g,b}(t),\gamma_{x,\nu}) .$$%
Denote $g:t\rightarrow E_{\mathcal{D}_n}(R_n^t)$, we remark that $$g(1)-g(0)=S_n(\gamma_{x,\nu})-S^{sc}_n(\gamma_{x,\nu}).$$Therefore the objective is to upper-bound $\|g(1)-g(0)\|_{1}.$ In this goal, similarly than in the proof of \cref{theorem_2}, we notice that $g$ is differentiable and that its derivative respects $$g'(t)=-E_{\mathcal{D}_n}\Big(R_n^t\Big[\frac{W_n}{2\sqrt{t}}-\frac{Y^{{sc},\eta'_n}}{2\sqrt{1-t}}\Big] R_n^t\Big).$$ 

\noindent The key of the proof consists in upper-bounding $|g'(t)|$. Firstly we note for all $t\in (0,1)$ that we have:
\begin{equation*}\begin{split}&E_{\mathcal{D}_n}\Big[R_n^t\frac{W_n}{\sqrt{t}}R_n^t\Big]
\\&\overset{(a)}{=}\frac{1}{|A_{n,n}|^{k_n-\frac{1}{2}}}\int_{A_{n,n}^{k_n}}E_{\mathcal{D}_n}\Big[R_n^t\frac{X_{\g}^n}{\sqrt{t}}R_n^t\Big]d|\g|
\\&\overset{(b)}{=}\frac{-1}{|A_{n,n}|^{k_n-\frac{1}{2}}}\int_{A_{n,n}^{k_n}}E_{\mathcal{D}_n}\Big[R_n^t\big[W_n-W_{\g,b_n}^n\big]R^{\g,t}_{b_n}X_{\g}^nR_n^t\Big]d|\g|
\\&\quad - 
\frac{1}{|A_{n,n}|^{k_n-\frac{1}{2}}}\int_{A_{n,n}^{k_n}}E_{\mathcal{D}_n}\Big[R^{\g,t}_{b_n}X_{\g}^nR_n^t\big[W_n-W_{\g,b_n}^n\big]R^{\g,t}_{b_n}\Big]d|\g|
\\&\quad+ \frac{1}{|A_{n,n}|^{k_n-\frac{1}{2}}}\int_{A_{n,n}^{k_n}}E_{\mathcal{D}_n}\Big[R^{\g,t}_{b_n}\frac{X_{\g}^n}{\sqrt{t}}R^{\g,t}_{b_n}\Big]d|\g|\\&=a_1^{\gamma_{x,\nu}}+a_2^{\gamma_{x,\nu}}+a_3^{\gamma_{x,\nu}}
\end{split}\end{equation*}
where (a) comes from the linearity of the functional $E_{\mathcal{D}_n}$ and (b) from \cref{ca_ira}.
~The rest of proof consists in proving that: (i) $\big|a_3^{\gamma_{x,\nu}}\big|\rightarrow 0$ and (ii) on re-expressing $a_1^{\gamma_{x,\nu}}$ and $a_2^{\gamma_{x,\nu}}$. 

\noindent We start by proving that $\big|a_3^{\gamma_{x,\nu}}\big|\rightarrow 0$. Using the definition of the free global mixing coefficients $(\aleph^*_n[\cdot|\mathbb{G}^{k_n}_n])$ we have:
\begin{equation*}\begin{split}&\Big\|E_{\mathcal{D}_n}\Big[R^{\g,t}_{b_n}\frac{{X_{\g}^n}}{\sqrt{t}}R^{\g,t}_{b_n}\Big]\Big\|_{1}
\\&\overset{(a)}{\le}\sum_{b\ge b_n} \Big\|E_{\mathcal{D}_n}\Big[R^{\g,t}_{b}\frac{X_{\g}^n}{\sqrt{t}}\big[R^{\g,t}_{b+1}-R^{\g,t}_{b}\big]\Big]\Big\|_{1}
\\&\quad+\sum_{b\ge b_n} \Big\|E_{\mathcal{D}_n}\Big[\big[R^{\g,t}_{b+1}-R^{\g,t}_{b}\big]\frac{X_{\g}^n}{\sqrt{t}}R^{\g,t}_{b+1}\Big]\Big\|_{1}
\\&\overset{(b)}{\le} \sum_{b\ge b_n} \Big\|E_{\mathcal{D}_n}\Big[R^{\g,t}_{b}{X_{\g}^n}R^{\g,t}_{b+1}\big[W_{\g,b+1}^n-W_{\g,b}^n\big]R^{\g,t}_{b}\big]\Big]\Big
\|_{1}
\\&\quad +\sum_{b\ge b_n} \Big\|E_{\mathcal{D}_n}\Big[R^{\g,t}_{b+1} \big[W_{\g,b+1}^n-W_{\g,b}^n\big]R^{\g,t}_{b}{X_{\g}^n}R^{\g,t}_{b+1}\Big]\Big
\|_{1}
\\&\overset{(c)}{\le}\frac{2k_n^2}{\sqrt{|A_{n,n}|}\nu^3}\sup_{\g\in \G^{k_n}} \Big\|{X_{\g}^n}\Big\|^2_{2}\sum_{b\ge b_n}|B_{b+1}\setminus B_b|\aleph^{*,s}_n[b|\G^{k_n}_n].
\end{split}\end{equation*} where (a) comes from the triangular inequality and (b) from \cref{ca_ira}. While  (c) was obtained using successively the definition of $(\aleph^{*s}_n[\cdot|\G^{k_n}_n])$, the fact that $E_{\mathcal{D}_n}({X_{\g}^n})=0$ as well as the fact that $|\overline{B}_{k_n}(\g,b+1)\setminus \overline{B}_{k_n}(\g,b)|\le k_n^2|B_{b+1}\setminus B_b||A_{n,n}|^{k_n-1}$. 
Therefore by taking the average over $\g\in A_{n,n}^{k_n}$ we obtain that 
\begin{equation}\begin{split}\label{u*_1}\big|a_3^{\gamma_{x,\nu}}|&\le \frac{2k_n^2}{\nu^3}\sup_{\g \in \G^{k_n}}\Big\|{X_{\g}^n}\Big\|^2_{2} \mathcal{R}^s_{n}[b_n]
\end{split}\end{equation}

\noindent The next step consists in re-expressing $a_1^{\gamma_{x,\nu}}$ as an alternative quantity that is easier to bound. For this we introduce the following notations for all $\g,\g'\in \G^{k_n}$: \begin{equation*}\begin{split}&W_{\g,\g',2b_n}^n(t):=W_n(t)-\frac{\sqrt{t}}{|A_{n,n}|^{k_n-\frac{1}{2}}}\int_{\overline{B}_{k_n}(\g,2b_n)\bigcup \overline{B}_{k_n}(\g',2b_n)}X^n_{\g''}d|\g''|
\\& R^{\g,\g',t}_{2b_n}:=R(W_{\g,\g',2b_n}^n(t),\gamma_{x,\nu}).\end{split}\end{equation*}
Moreover, using \cref{ca_ira}  we remark for all $\g,\g'\in A_{n,n}^{k_n}$ that
\begin{equation}\begin{split}&\label{u_1}
\Big\|E_{\mathcal{D}_n}\Big[\big[R_n^t-R^{\g,\g',t}_{2b_n}\big]X^n_{\g'}R^{\g,t}_{b_n}X_{\g}^nR_n^t\Big]\Big\|_{1}
\\&\le \Big\|E_{\mathcal{D}_n}\Big[ R_n^t\big[W_n(t)-W_{\g,\g',2b_n}(t)\big]R_{\g,2b_n}^n(t)X_{\g'}^nR^{\g,t}_{b_n} X_{\g}^n R_n^t\Big]\Big\|_{1}
\\&\overset{(a)}{\le}\frac{ 2k_n^2{|B_{2b_n}|}}{\nu^4\sqrt{|A_{n,n}|}}\sup_{\g\in \G^{k_n}}\Big\| {X_{\g}^n}\Big\|_{3}^3 
\end{split}\end{equation}where (a) comes from the Holder-inequality coupled with the fact that \\$\Big|\overline{B}_{k_n}(\g',2b_n)\bigcup \overline{B}_{k_n}(\g,2b_n)\Big|\le2 k_n^2|B_{2b_n}||A_{n,n}|^{k_n-1}.$ Similarly we can prove that:
\begin{equation}\begin{split}&\label{u_2}
\Big\|E_{\mathcal{D}_n}\Big[R^{\g,\g',t}_{2b_n}X^n_{\g'}R^{\g,t}_{b_n}X_{\g}^n\times \big[R^{\g,\g',t}_{2b_n}-R_n^t\big]\Big]\Big\|_{1}
\le \frac{2 k_n^2{|B_{2b_n}|}}{\nu^4\sqrt{|A_{n,n}|}}\sup_{\g\in \G^{k_n}}\Big\| {X_{\g}^n}\Big\|_{3}^3; 
\end{split}\end{equation}
as well as:
\begin{equation}\begin{split}&\label{u_3}
\Big\|E_{\mathcal{D}_n}\Big[R^{\g,\g',t}_{2b_n}X^n_{\g'}\Big[R^{\g,t}_{b_n}-R^{\g,\g',t}_{2b_n}\Big] X_{\g}^n R^{\g,\g',t}_{2b_n}\Big]\Big\|_{1}
\le \frac{ 2k_n^2{|B_{2b_n}|}}{\nu^4\sqrt{|A_{n,n}|}}\sup_{\g\in \G^{k_n}}\Big\| {X_{\g}^n}\Big\|_{3}^3. 
\end{split}\end{equation}

Therefore by combining \cref{u_1}, \cref{u_2} and \cref{u_3} and averaging over $\g\in A_{n,n}^{k_n}$ and $\g'\in \overline{B}_{k_n}(\g,b_n)$ we obtain that \begin{equation}\begin{split}&\label{u*_2}\Big|a_1^{\gamma_{x,\nu}}+\frac{\sqrt{t}}{{|A_{n,n}|^{k_n-\frac{1}{2}}}}\int_{A_{n,n}^{k_n}}\int_{ \overline{B}_{k_n}(\g,b_n)} E_{\mathcal{D}_n}\Big[R^{\g,\g',t}_{2b_n}X_{\g'}^nR^{\g,\g',t}_{2b_n}X_{\g}^nR^{\g,\g',t}_{2b_n}\Big]d|\g'|d|\g| \Big| \\&\overset{(a)}{\le} \frac{6k_n^4|B_{2b_n}|^2}{\sqrt{|A_{n,n}|}\nu^4}\sup_{\g\in\G^{k_n}}\Big\| {X_{\g}^n}\Big\|_{3}^3
\end{split}\end{equation}  where to get $(a)$ we used the fact that: $\overline{B}_{k_n}(\g,2b_n)\le k_n^2|B_{b_n}| |A_{n,n}|^{k_n-1}.$ Similarly  we have:
\begin{equation}\begin{split}&\label{u*_3}\Big|a_2^{\gamma_{x,\nu}}+\frac{\sqrt{t}}{{|A_{n,n}|^{2k_n-1}}}\int_{A_{n,n}^{k_n}}\int_{ \overline{B}_{k_n}(\g,b_n)} E_{\mathcal{D}_n}\Big[R^{\g,\g',t}_{2b_n}X_{\g'}^nR^{\g,\g',t}_{2b_n}X_{\g}^nR^{\g,\g',t}_{2b_n}\Big]d|\g'|d|\g| \Big|\\&\le \frac{6k_n^4|B_{2b_n}|^2}{\sqrt{|A_{n,n}|}\nu^4}\sup_{\g\in \G^{k_n}}\Big\| {X_{\g}^n}\Big\|_{3}^3
\end{split}\end{equation}

Therefore using \cref{u*_1}, \cref{u*_2} and \cref{u*_3} we get that:
\begin{equation}\begin{split}&\label{nulle_3}\Big\|g(1)-g(0)\Big\|_{1}\\&\le\int_0^1\Big\|\frac{1}{{|A_{n,n}|^{2k_n-1}}}\int_{A_{n,n}^{k_n}}\int_{ \overline{B}_{k_n}(\g,b_n)}E_{\mathcal{D}_n}\Big[R^{\g,\g',t}_{2b_n}X_{\g}^nR^{\g,\g',t}_{2b_n}X^n_{\g'}R^{\g,\g',t}_{2b_n}\Big]d|\g'|d|\g|\\&\qquad\quad-E_{\mathcal{D}_n}\Big[R_n^t\frac{Y^{{sc},\eta'_n}}{2\sqrt{1-t}}R_n^t\Big]\Big\|_{1}~dt
+\frac{12k_n^4|B_{2b_n}|^2}{\sqrt{|A_{n,n}|}\nu^4}\sup_{\g\in \G^{k_n}}\Big\| {X_{\g}^n}\Big\|_{3}^3\\&\quad+\frac{2k_n^2\sup_{\g\in \G^{k_n}}\Big\|{X_{\g}^n}\Big\|^2_{2} \mathcal{R}^s_{n}[b_n]}{\nu^3}.
\end{split}\end{equation}

\noindent Moreover if we define the following shorthand notation  $R^{sc,\eta}_{g,t}:=R(W^{n,sc}_g(t),\gamma_{x,\nu})$ then by exploiting the free independence of $(Y^n_g)$  we obtain for all $t\in (0,1)$ that:
\begin{equation}\begin{split}&\label{nulle_2}\Big\|\frac{1}{{|A_{n,n}|}}\int_{A_{n,n}}E_{\mathcal{D}_n}\Big[R^{sc,\eta}_{g,t}Y^n_gR^{sc,\eta}_{g,t}Y^n_gR^{sc,\eta}_{g,t}\Big]d|g|
-E_{\mathcal{D}_n}\Big[R_n^t\frac{Y^{{sc},\eta'_n}}{2\sqrt{1-t}}R_n^t\Big]~dt\Big\|_{1}
\\&\le\frac{6|B_0|^2}{\sqrt{|A_{n,n}|}\nu^4}\Big\| {Y^{{sc},\eta'_n}}\Big\|_{3}^3.
\end{split}\end{equation}
By combining \cref{nulle_2} and \cref{nulle_3} we therefore have:
\begin{equation}\begin{split}&\label{nulle_4}\Big\|g(1)-g(0)\Big\|_{1}\\&\le\int_0^1\Big\|\frac{1}{{|A_{n,n}|^{2k_n-1}}}\int_{A_{n,n}^{k_n}}\int_{ \overline{B}_{k_n}(\g,b_n)}\hspace{-3mm}E_{\mathcal{D}_n}\Big[R^{\g,\g',t}_{2b_n}X_{\g}^nR^{\g,\g',t}_{2b_n}X^n_{\g'}R^{\g,\g',t}_{2b_n}\Big]d|\g'|d|\g|\\&\quad-\frac{1}{{|A_{n,n}|}}\int_{A_{n,n}}E_{\mathcal{D}_n}\Big[R^{sc,\eta}_{g,t}Y^n_gR^{sc,\eta}_{g,t}Y^n_gR^{sc,\eta}_{g,t}\Big]d|g|\Big\|_{1}dt
+\frac{12k_n^4|B_{2b_n}|^2}{\sqrt{|A_{n,n}|}\nu^4}\sup_{\g\in \G^{k_n}}\Big\| {X_{\g}^n}\Big\|_{3}^3\\&\quad+\frac{2k_n^2\sup_{\g\in \G^{k_n}}\Big\|{X_{\g}^n}\Big\|^2_{2} \mathcal{R}^s_{n}[b_n]}{\nu^3}+\frac{6|B_0|^2}{\sqrt{|A_{n,n}|}\nu^4}\Big\| {Y^{{sc},\eta'_n}}\Big\|_{3}^3
\end{split}\end{equation}
Therefore to get the desired result it is sufficient to compare \begin{equation*}
    \begin{split}&
        \int_{A_{n,n}^{k_n}}\int_{ \overline{B}_{k_n}(\g,b_n)}\hspace{-3mm}E_{\mathcal{D}_n}\Big[R^{\g,\g',t}_{2b_n}X_{\g}^nR^{\g,\g',t}_{2b_n}X^n_{\g'}R^{\g,\g',t}_{2b_n}\Big]d|\g'|d|\g|
        \\&\rm{with}\qquad\frac{1}{{|A_{n,n}|}}\int_{A_{n,n}}E_{\mathcal{D}_n}\Big[R^{sc,\eta}_{g,t}Y^n_gR^{sc,\eta}_{g,t}Y^n_gR^{sc,\eta}_{g,t}\Big]d|g|.
    \end{split}
\end{equation*}In this goal we introduce some shorthand notations. For all $t\in [0,1]$ we denote: \begin{equation*}\begin{split}&\eta_{\g,\g'}: a\rightarrow E_{\mathcal{D}_n}\Big[X_{\g}^n a X^n_{\g'}\Big],\quad S_{\g,\g'}^{2b_n}:=E_{\mathcal{D}_n}\Big[R(W_{\g,\g'}^{2b_n}(t),\gamma_{x,\nu})\Big],\quad S^0_{n}:= E_{\mathcal{D}_n}[R_n^t].\end{split}\end{equation*} Using the triangular inequality we remark that:
\begin{equation*}\begin{split}&\Big\|\frac{1}{{|A_{n,n}|^{2k_n-1}}}\int_{A_{n,n}^{k_n}}\int_{\overline{B}_{k_n}(\g,b_n)}\hspace{-3mm}E_{\mathcal{D}_n}\Big[R^{\g,\g',t}_{2b_n}X_{\g}^nR^{\g,\g',t}_{2b_n}X^n_{\g'}R^{\g,\g',t}_{2b_n}\Big]d|\g'|d|\g| -S_{n,t}^{0}\eta'_n(S^0_{n,t})S_{n,t}^{0} \Big\|_{1}
\\&\le \Big\|\frac{1}{{|A_{n,n}|^{2k_n-1}}}\int_{A_{n,n}^{k_n}}\int_{\overline{B}_{k_n}(\g,b_n)}E_{\mathcal{D}_n}\Big[R^{\g,\g',t}_{2b_n}X_{\g}^nR^{\g,\g',t}_{2b_n}X^n_{\g'}R^{\g,\g',t}_{2b_n}\Big]\\&\qquad \qquad \qquad\qquad \qquad \qquad \qquad -S_{\g,\g'}^{2b_n}\eta_{\g,\g'}(S_{\g,\g'}^{2b_n})S_{\g,\g'}^{2b_n}d|\g'|d|\g|  \Big\|_{1}
\\&\quad+ \Big\|\frac{1}{{|A_{n,n}|^{2k_n-1}}}\int_{A_{n,n}^{k_n}}\int_{\overline{B}_{k_n}(\g,b_n)}\hspace{-2mm} S_{\g,\g'}^{2b_n}\eta_{\g,\g'}(S_{\g,\g'}^{2b_n})S_{\g,\g'}^{2b_n}
- S_{n,t}^{0}\eta_{\g,\g'}(S^0_{n,t})S_{n,t}^{0} d|\g|d|\g'|\Big\|_{1}
\\&\quad+ \Big\|\frac{1}{{|A_{n,n}|^{2k_n-1}}}\int_{A_{n,n}^{k_n}}\int_{ \overline{B}_{k_n}(\g,b_n)} S_{n,t}^{0}\eta_{\g,\g'}(S^0_{n,t})S_{n,t}^{0}d|\g'|d|\g| 
-S_{n,t}^{0}\eta'_n(S^0_{n,t})S_{n,t}^{0} \Big\|_{1}
\\&\le (c_1^{\gamma_{x,\nu}})+(c_2^{\gamma_{x,\nu}})+(c_3^{\gamma_{x,\nu}})
\end{split}\end{equation*}
We bound each terms successively. The first term is bounded using the definition of $(\aleph^{*,j}_n[\cdot|\G_n])$, and the later terms are a consequence of the triangular inequality and \cref{ca_ira}. We focus first on the term $c_1^{\gamma_{x,\nu}}$. Using the definition of the free global mixing coefficients, we obtain that
\begin{equation*}\begin{split}&
\big|(c_1^{\gamma_{x,\nu}})\big|\\&\le\frac{ \sup_{\g\in \G^{k_n}}\Big\|X_{\g}^n\Big\|^2_{2}}{\nu^3|A_{n,n}|^{2k_n-1}}\int_{A_{n,n}^{k_n}}\int_{\overline{B}_{k_n}(\g,b_n)}\aleph_n^{*,j}[\bar{d}(\{\g,\g'\}, A_{n,n}^{k_n}\setminus \overline{B}_{k_n}(\g,2b_n))|\mathbb{G}^{k_n}_n]d|\g'|d|\g|
\\&\overset{}{\le}\frac{ k_n^2\sup_{\g\in \G^{k_n}}\Big\|X_{\g}^n\Big\|^2_{2}|B_{b_n}|\aleph_n^{*,j}[b_n|\mathbb{G}^{k_n}_n]}{\nu^3};
\end{split}\end{equation*} 
The next step consists in bounding $c_2^{\gamma_{x,\nu}}$. Using the triangular inequality we obtain that:
\begin{equation*}\begin{split}&
\Big\|S_{\g,\g'}^{2b_n}\eta_{\g,\g'}(S_{\g,\g'}^{2b_n})
S_{\g,\g'}^{2b_n} -S_{n,t}^{0}\eta_{\g,\g'}(S^0_{n,t})
S_{n,t}^{0}\Big\|_{1}
\\&\overset{}{\le} 
\Big\|\Big[S_{\g,\g'}^{2b_n}-S_{n,t}^{0}\Big]\eta_{\g,\g'}(S_{\g,\g'}^{2b_n})
S_{\g,\g'}^{2b_n}\Big\|_{1}
+\Big\|S_{n,t}^{0}\Big[\eta_{\g,\g'}(S_{\g,\g'}^{2b_n})-\eta_{\g,\g'}(S^0_{n,t})\Big]
S_{\g,\g'}^{2b_n}\Big\|_{1}
\\&\quad +\Big\|S_{n,t}^{0}\eta_{\g,\g'}(S^0_{n,t})
\Big[S_{\g,\g'}^{2b_n}-S_{n,t}^{0}\Big] \Big\|_{1}
\\&\overset{(a)}{\le} \Big\|E_{\mathcal{D}_n}\Big[R_{2b_n}^{\g,\g',t}\big[W_n(t)-W_{\g,\g'}^{2b_n}(t)\big]R_n^t\Big]\eta_{\g,\g'}(S_{\g,\g'}^{2b_n})
S_{\g,\g'}^{2b_n} \Big\|_{1}
\\&\quad+\Big\|S_{n,t}^{0}E_{\mathcal{D}_n}\Big[X_{\g}E_{\mathcal{D}_n}\Big[R_{2b_n}^{\g,\g',t}\big[W_n(t)-W_{\g,\g'}^{2b_n}(t)\big]R_n^t\Big]X_{\g'}\Big]S_{\g,\g'}^{2b_n}\Big\|_{1}
\\&\quad +\Big\|S_{n,t}^{0}\eta_{\g,\g'}(S^0_{n,t})
E_{\mathcal{D}_n}\Big[R_{2b_n}^{\g,\g',t}\big[W_n(t)-W_{\g,\g'}^{2b_n}(t)\big]R_n^t\Big]\Big\|_{1}
\\&\overset{(b)}{\le}\frac{6k_n^2|B_{2b_n}|}{\sqrt{|A_{n,n}|}\nu^4}\sup_{\g\in \G^{k_n}}\Big\|X_{\g}^n\Big\|_{3}^3.
\end{split}\end{equation*} where to get (a) we used \cref{ca_ira}; and where (b) comes from the Cauchy-Swartz inequality. Therefore by averaging over $\g\in A_{n,n}^{k_n}$ and $\g'\in \overline{B}_{k_n}(\g',b_n)$ this implies that
\begin{equation*}\begin{split}
(c_2^{\gamma_{x,\nu}})
&\overset{(a)}{\le}\frac{6k_n^4|B_{2b_n}|^2}{\sqrt{|A_{n,n}|}\nu^4}\sup_{\g\in G^{k_n}}\Big\|X_{\g}^n\Big\|_{3}^3.\end{split}\end{equation*} where to get (a) we used the fact that $|\overline{B}_{k_n}(\g,2b_n)|\le k_n^2 |B_{2b_n}| |A_{n,n}|^{k_n-1}.$
 The next step is to bound $(c_3^{\gamma_{x,\nu}})$. Using the triangular inequality we have
 \begin{equation*}\begin{split}(c_{3}^{\gamma_{x,\nu}})\le&
 \Big\|\frac{1}{{|A_{n,n}|^{2k_n-1}}}\int_{A_{n,n}^{k_n}\times \overline{B}_{k_n}(\g,b_n)} S_{n,t}^{0}\eta_{\g,\g'}(S^0_{n,t})S_{n,t}^{0}d|\g'|d|\g|  -S_{n,t}^{0}\eta'_n(S^0_{n,t})S_{n,t}^{0} \Big\|_{1}
\\&\le \Big\|\frac{1}{{|A_{n,n}|^{2k_n-1}}}\int_{A_{n,n}^{k_n}}\int_{A_{n,n}^{k_n}\setminus \overline{B}_{k_n}(\g,b_n)} S_{n,t}^{0}\eta_{\g,\g'}(S^0_{n,t})S_{n,t}^{0}d|\g'|d|\g|  \Big\|_{1}
\\& \quad +\frac{1}{\nu^3} \Big\|\eta_n'(\cdot)-\eta_n(\cdot)\big|_{\mathcal{A}^{\rm{inv}}_{\mathcal{D}(\G_n^{k_n})}}\Big\|_{\rm{op}}
\\&\overset{(a)}{\le} \frac{k_n^2\sup_{\g\in \G^{k_n}}\big\|X_{\g}^n\big\|_{2}^2}{\nu^3}  \mathcal{R}^s_n[b_n] +\frac{1}{\nu^3} \Big\|\eta_n'(\cdot)-\eta_n(\cdot)\big|_{\mathcal{A}^{\rm{inv}}_{\mathcal{D}(\G_n^{k_n})}}\Big\|_{\rm{op}};
\end{split}\end{equation*}  where to obtain (a) we exploited the Cauchy-Swartz inequality.
~This implies that
\begin{equation*}\begin{split}&\Big\|g(1)-g(0)\Big\|_{1}\\&\le\int_0^1\Big\|S_{n,t}^{0}\eta'_n(S^0_{n,t})S_{n,t}^{0}-\frac{1}{{|A_{n,n}|}}\int_{A_{n,n}}E_{\mathcal{D}_n}\Big[R^{sc,\eta}_{g,t}Y^n_gR^{sc,\eta}_{g,t}Y^n_gR^{sc,\eta}_{g,t}\Big]d|g|\Big\|_{1}dt
\\&~+\frac{12k_n^4|B_{2b_n}|^2}{\sqrt{|A_{n,n}|}\nu^4}\sup_{\g\in \G^{k_n}}\Big\| {X_{\g}^n}\Big\|_{3}^3+\frac{3k_n^2\sup_{\g\in \G^{k_n}}\Big\|{X_{\g}^n}\Big\|^2_{2} \mathcal{R}^s_{n}[b_n]}{\nu^3}+\frac{6|B_0|^2}{\sqrt{|A_{n,n}|}\nu^4}\Big\| {Y^{{sc},\eta'_n}}\Big\|_{3}^3
\\&+\frac{6k_n^4|B_{2b_n}|^2}{\sqrt{|A_{n,n}|}\nu^4}\sup_{\g\in \G^{k_n}}\Big\|X_{\g}^n\Big\|_{3}^3+\frac{ k_n^2\sup_{\g\in \G^{k_n}}\Big\|X_{\g}^n\Big\|^2_{2}|B_{b_n}|\aleph_n^{*,j}[b_n|\mathbb{G}^{k_n}_n]}{\nu^3}
\\&+\frac{1}{\nu^3} \Big\|\eta_n'(\cdot)-\eta_n(\cdot)\big|_{\mathcal{A}^{\rm{inv}}_{\mathcal{D}(\G_n^{k_n})}}\Big\|_{\rm{op}}.
\end{split}\end{equation*}

Moreover similarly we can prove that 
\begin{equation*}\begin{split}&\int_0^1\Big\|S_{n,t}^{0}\eta'_n(S^0_{n,t})S_{n,t}^{0}-\frac{1}{{|A_{n,n}|}}\int_{A_{n,n}}E_{\mathcal{D}_n}\Big[R^{sc,\eta}_{g,t}Y^n_gR^{sc,\eta}_{g,t}Y^n_gR^{sc,\eta}_{g,t}\Big]d|g|\Big\|_{1}dt
\\&\le\frac{6|B_{0}|^2}{\sqrt{|A_{n,n}|}\nu^4}\Big\|Y^{{sc},\eta'_n}\Big\|_{3}^3.
\end{split}\end{equation*}
Therefore we finally obtain that:
\begin{equation*}\begin{split}&\Big\|g(1)-g(0)\Big\|_{1}\\&\le 
\frac{1}{\sqrt{|A_{n,n}|}\nu^4}\Big[{12|B_{0}|^2}\Big\|Y^{{sc},\eta'_n}\Big\|_{3}^3+{18k_n^4|B_{2b_n}|^2}\sup_{\g\in \G^{k_n}}\Big\| {X_{\g}^n}\Big\|_{3}^3\Big]+
\\&~+\frac{1}{\nu^3} \Big\|\eta_n'(\cdot)-\eta_n(\cdot)\big|_{\mathcal{A}^{\rm{inv}}_{\mathcal{D}(\G_n^{k_n})}}\Big\|_{\rm{op}}+\frac{ k_n^2\sup_{\g\in \G^{k_n}}\Big\|X_{\g}^n\Big\|^2_{2}}{\nu^3}\Big[|B_{b_n}|\aleph_n^{*,j}[b_n|\mathbb{G}^{k_n}_n]+ \mathcal{R}_n^s[b_n]\Big].
\end{split}\end{equation*}
\end{proof}
\noindent \emph{Proof of \cref{theorem_3}}: We prove that \cref{bartlett} implies that \cref{theorem_3} holds.
\begin{proof}
Taking $\eta'_n(\cdot):=\eta_n(\cdot)$ gives the desired result.
\end{proof}
We prove that \cref{bartlett} implies that \cref{theorem_4} holds.
\begin{prop} 
Let $(X^n_{\g})$ be a triangle array of free random variables satisfying all the conditions of \cref{theorem_4}. Let $S_n(\cdot)$ denote the (operator-valued) Stiejles transform of \\$\frac{1}{|A_{n,n}|^{k_n-\frac{1}{2}}}\int_{\A_{n,n}^{k_n}}X_{\g}d|\g|$, and $S_n^{sc}(\cdot)$ the Stiejles transform of the operator valued semi-circular operator $Y^{{sc},\eta^*_n}$ with radius $\eta^*_n(\cdot)$. 
The following upper bound holds
\begin{equation*}\begin{split}&\Big\|S_n(\gamma^{x,\nu})-S_n^{sc}(\gamma^{x,\nu})\Big\|_{1}
\\&\le
\frac{1}{|A_{n,n}|\nu^3}\Big[k_n^4|B_{b_n}|\sup_{\g \in \G^{k_n}}\|X_{\g}^n\|_2^2\mathcal{T}^m_n+2|A_{n,n}\triangle B_{b_n}A_{n,n}|\Big[\sup_{\g \in \G^{k_n}}\|X_{\g}^n\|_2^2+ \|Y^{{sc},\eta^*_n}\|_2^2\Big]\Big]
\\&+\frac{1}{\sqrt{|A_{n,n}|}\nu^4}\Big[{12|B_{0}|^2}\Big\|Y^{{sc},\eta^*_n}\Big\|_{3}^3+{18k_n^4|B_{2b_n}|^2}\sup_{\g\in \G^{k_n}}\Big\| {X_{\g}^n}\Big\|_{3}^3\Big]
\frac{5k_n^2\sup_{\g\in \G^{k_n}}\Big\|{X_{\g}^n}\Big\|^2_{2} \mathcal{R}^s_{n}[b_n]}{\nu^4}
\\&~+\frac{ k_n^2\sup_{\g\in \G^{k_n}}\Big\|X_{\g}^n\Big\|^2_{2}|B_{b_n}|\aleph_n^{*,j}[b_n|\mathbb{G}^{k_n}_n]}{\nu^3}.
\end{split}\end{equation*}
where $\gamma_{x,\nu}=x+i\nu$.
\end{prop}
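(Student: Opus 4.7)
The strategy is to invoke \cref{bartlett_2} with the choice $\eta'_n := \eta^*_n$, so that every term in the conclusion of \cref{bartlett_2} maps directly onto a term in the statement of \cref{theorem_4} (with $Y^{sc,\eta'_n}$ replaced by $Y^{sc,\eta^*_n}$). The only additional quantity that must be bounded is the residual operator norm
\begin{equation*}
\Big\|\eta^*_n(\cdot)-\eta_n(\cdot)\big|_{\mathcal{A}^{\rm{inv}}_{\mathcal{D}(\G_n^{k_n})}}\Big\|_{\rm{op}},
\end{equation*}
which must be controlled by
\begin{equation*}
\frac{1}{|A_{n,n}|}\Big[k_n^4|B_{b_n}|\sup_{\g}\|X^n_{\g}\|_2^2\,\mathcal{T}^m_n+2|A_{n,n}\triangle B_{b_n}A_{n,n}|\big(\sup_{\g}\|X^n_{\g}\|_2^2+\|Y^{sc,\eta^*_n}\|_2^2\big)\Big].
\end{equation*}

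I would establish this estimate as follows. Fix $a\in\mathcal{A}^{\rm{inv}}_{\mathcal{D}(\G_n^{k_n})}$ with $\|a\|_{\infty}\le 1$, and decompose the double integral defining $\eta_n(a)$ by selecting, for each ordered pair $(i,j)\in\dbracket{k_n}^2$, the index of $\g$ on the left and of $\g'$ on the right that will play the privileged role. Joint invariance of $(X^n_{\g})$ under $\mathcal{D}(\G_n^{k_n})$, combined with \cref{eq_suis}, allows me to translate so that the privileged coordinate $\g_i$ of the left operator sits at $e$; the remaining $k_n-1$ coordinates are then integrated over $A_{n,n}\g_i^{-1}$ rather than $A_{n,n}$, and the Folner ratio $|A_{n,n}\triangle B_{b_n}A_{n,n}|/|A_{n,n}|$ absorbs the discrepancy. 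I then approximate each inner average by its marginal limit $\bar{X}^n_{i,e}$ using the marginal-mixing definition: for $\g^1,\g^3$ sharing the $i$-th coordinate but otherwise lying in well-separated regions, the operator $X^n_{\g^1}-X^n_{\g^3}$ paired against $a$ and a second factor is controlled by $\aleph^m_n[b|\G_n^{k_n}]$, with $b$ the relevant pseudo-distance. A layer-cake sum converts this to $\mathcal{T}^m_n$; the factor $k_n^4$ arises from the outer $(i,j)$-sum ($k_n^2$) combined with telescoping between the two $X$-factors ($k_n^2$), and $|B_{b_n}|$ from the translation radius.

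The remaining piece, the Folner contribution involving $\|Y^{sc,\eta^*_n}\|_2^2$, arises on the semi-circular side of the Lindeberg interpolation: when comparing the expression $E_{\mathcal{D}_n}[R_n^tY^{sc,\eta^*_n}R_n^t]$ with its locally-averaged version over $B_{b_n}$-neighborhoods used implicitly by $\eta^*_n$, one pays a cost proportional to $\|Y^{sc,\eta^*_n}\|_2^2\cdot|A_{n,n}\triangle B_{b_n}A_{n,n}|/|A_{n,n}|$, exactly as in the final display of the proof of \cref{theorem_1}. Combined with the marginal-mixing bound and the output of \cref{bartlett_2}, this gives the stated inequality.

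\textbf{Main obstacle.} The technically delicate step is the marginal-mixing approximation, since $\aleph^m_n$ is defined only as a bound on a triple product in which a \emph{single} $X^n_{\g}$ has been perturbed, whereas $\eta_n(a)$ contains two $X$-factors surrounding $a$. I therefore need to telescope one factor at a time, carrying the partially-replaced operator as the auxiliary $Y\in\mathcal{A}^{0,\rm{inv}}_n$ appearing in the definition of $\aleph^m_n$. Verifying that this intermediate operator genuinely lies in $\mathcal{A}^{0,\rm{inv}}_n$--in particular that its conditional expectation $E_{\mathcal{D}_n}(\cdot)$ vanishes, that its $\|\cdot\|_{\infty}$-norm can be renormalized to $1$, and that the renormalization produces only the $\sup_{\g}\|X^n_{\g}\|_2^2$ prefactor--is where the bulk of the bookkeeping sits.
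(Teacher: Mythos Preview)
Your overall strategy matches the paper exactly: invoke \cref{bartlett_2} with $\eta'_n=\eta^*_n$, then bound $\|\eta^*_n-\eta_n\|_{\rm op}$ using the marginal-mixing coefficients via a coordinate-by-coordinate telescoping, plus a F{\o}lner boundary term. The paper carries this out by building interpolating index sequences $(\g^{1,l}),(\g^{2,l})$ between the pair $(\g^1,\g^2)$ appearing in $\eta_{\g^1,\g^2}(a)$ and a pair $(\g^3,\g^4)$ adapted to the averaged operators $\bar X^n_{i,\cdot}$, exactly as you sketch.

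Where you go astray is in the ``main obstacle''. In the definition of $\aleph^m_n$ the three-term product is
\[
E_{\mathcal{D}_n}\Big(\big[X^{n,\mathcal{N}}_{\g^1}-X^{n,\mathcal{N}}_{\g^3}\big]\,Y\,X^{n,\mathcal{N}}_{\g^2}\Big),
\]
where $\g^1,\g^3$ differ in a single coordinate, $\g^2$ is a \emph{full} element of $\G_n^{k_n}$, and $Y$ is the sandwiched tail-algebra element. When you telescope $\eta_{\g^{1,l},\g^{2,l}}(a)-\eta_{\g^{1,l-1},\g^{2,l-1}}(a)$ and split into two pieces, each piece has the form $E_{\mathcal{D}_n}[(X^n_{\tilde\g}-X^n_{\tilde\g'})\,a\,X^n_{\g^{2,l}}]$ (or its mirror). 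The partially-replaced index $\g^{2,l}$ plays the role of $\g^2$; there is no constraint on $X^n_{\g^2}$ beyond its being an $X$-factor. The auxiliary $Y$ is simply $a$ itself, which already lies in $\mathcal{A}^{\rm tail}_{\mathcal{D}(\G_n^{k_n})}$ with $\|a\|_\infty\le 1$ by hypothesis. So the bookkeeping you flag --- checking that a partially-replaced $X$-operator lies in $\mathcal{A}^{0,\rm inv}_n$, renormalising its $\|\cdot\|_\infty$-norm --- is a non-issue created by misassigning the roles. With that correction, your outline and the paper's proof coincide.
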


\begin{proof}  We take $\eta'_n(\cdot)=\eta^*_n(\cdot)$. As $(X^n_{\g})$ satisfies all the conditions of \cref{theorem_4} it also satisfies the conditions of \cref{theorem_3}. Therefore using \cref{bartlett_2} we know that 
\begin{equation*}\begin{split}&\Big\|S_n(\gamma^{x,\nu})-S_n^{sc}(\gamma^{x,\nu})\Big\|_{1}
\\&\le
\frac{{12|B_{0}|^2}\Big\|Y^{sc,\eta}\Big\|_{3}^3+{18k_n^4|B_{2b_n}|^2}\sup_{\g\in \G^{k_n}}\Big\| {X_{\g}^n}\Big\|_{3}^3}{\sqrt{|A_{n,n}|}\nu^4}+\frac{3k_n^2\sup_{\g\in \G^{k_n}}\Big\|{X_{\g}^n}\Big\|^2_{2} \mathcal{R}^s_{n}[b_n]}{\nu^3}
\\&~+\frac{ k_n^2\sup_{\g\in \G^{k_n}}\Big\|X_{\g}^n\Big\|^2_{2}|B_{b_n}|\aleph_n^{*,j}[b_n|\mathbb{G}^{k_n}_n]}{\nu^3}+\frac{1}{\nu^3} \Big\|\eta_n^*(\cdot)-\eta_n(\cdot)\big|_{\mathcal{A}^{\rm{inv}}_{\mathcal{D}(\G_n^{k_n})}}\Big\|_{\rm{op}}.
\end{split}\end{equation*}
The key point of this proof is to upper-bound $ \Big\|\eta_n^*(\cdot)-\eta_n(\cdot)\big|_{\mathcal{A}^{\rm{inv}}_{\mathcal{D}(\G_n^{k_n})}}\Big\|_{\rm{op}}$.  Let $a\in \A^{\rm{inv}}_{\mathcal{D}(\G_n^{k_n})}$ be an operator verifying $\|a\|_{\infty}\le 1$. We denote $\eta_{\g,\g'}:a\rightarrow E_{\mathcal{D}_n}\big[X^n_{\g}a X^{n}_{\g'}\big]$ and remark that $$\eta_n(a)=\frac{1}{|A_{n,n}^{2k_n-1}|}\int_{A_{n,n}^{2k_n}}\eta_{\g,\g'}(a)
d|\g|d|\g'|.$$
 To prove the desired result we firstly propose a simpler form for $\eta_{\g,\g'}(\cdot)$ when $\g'\in \overline{B}_{k_n}(\g)$.  We write: $$\mathcal{I}_{b_n}(\g,\g'):=\{(i,j)| d(\g_i,\g'_j)\le b_n\}.$$ Let $\g^1,\g^2,\g^3,\g^4\in \G^{k_n}_n$ be elements such that $\mathcal{I}_{b_n}(\g^1,\g^2)=\{(i,j)\}$ and such that ${\g^1_i}=\g^{3}_i$ and $\g^2_j=\g^4_j$. We want to compare $\eta_{\g^1,\g^2}(\cdot)$ with $\eta_{\g^3,\g^4}(\cdot)$. In this goal, we build $(\g^{1,l})$ and $(\g^{2,l})$ to be interpolating sequences between $(\g^1,\g^2)$ and $(\g^3,\g^4)$. We define them as $$\g^{1,l}_m:= \begin{cases}\g^{3}_m ~\rm{if}~ m\le l\\ \g^{1}_m ~\rm{otherwise}\end{cases}\qquad \g^{2,l}_m:= \begin{cases}\g^{4}_m ~\rm{if}~ m\le l\\ \g^{2}_m ~\rm{otherwise}\end{cases}.$$ By the triangular inequality and the definition of free marginal mixing coefficients, the following holds
 \begin{equation*}\begin{split}&\Big\|\eta_{\g^1,\g^2}(a)-\eta_{\g^3,\g^4}(a)\Big\|_{1}
 \\&\le \sum_{l=1}^{k_n}\Big\|\eta_{\g^{1,l},\g^{2,l}}(a)-\eta_{\g^{1,l-1},\g^{2,l-1}}(a)\Big\|_{1}
 \\&\le \sup_{\g \in \G^{k_n}}\|X_{\g}^n\|_2^2\sum_{l=1}^{k_n} \aleph^m_n\Big[\bar{d}_l(\{\g^1_l,\g^3_l\}, [\g^1,\g^2]^{\setminus l}\bigcup [\g^3,\g^4]^{\setminus l}) \Big|\G^{k_n}\Big]
 \\&\qquad \qquad \qquad \qquad \qquad +\aleph^m_n\Big[\bar{d}_l(\{\g^2_l,\g^4_l\}, [\g^2,\g^1]^{\setminus l}\bigcup [\g^4,\g^3]^{\setminus l}) \Big|\G^{k_n}\Big]
\end{split}\end{equation*}Therefore using the definition of $\bar{X}^n_{i,\g^1_{i}}$ and $\bar{X}^n_{j,\g^2_{j}}$ we obtain that:
 \begin{equation*}\begin{split}&\Big\|\eta_{\g_1,\g_2}(a)-E_{\mathcal{D}_n}\Big(\bar{X}^n_{i,\g^1_i}a\bar{X}^n_{i,\g^2_j}\Big)\Big\|_{1}
\\&\le 2\sup_{\g \in \G^{k_n}}\|X_{\g}^n\|_2^2\sum_{l=1}^{k_n} \aleph^m_n\Big[\bar{d}_l(\g^1_l, [\g^1,\g^2]^{\setminus l}) \Big|\G^{k_n}\Big]
 +\aleph^m_n\Big[\bar{d}_l(\g^2_l, [\g^2,\g^1]^{\setminus l})\Big|\G^{k_n}\Big].
\end{split}\end{equation*}
Which in term implies that if we write: $\mathcal{J}_{b_n,n}^{i,j}:=\{(\g,\g')|\g,\g'\in A_{n,n}^{k_n}~\rm{s.t}~\{i,j\}\subset\mathcal{I}_{b_n}(\g,\g')\}$ then we have
\begin{equation*}\begin{split}&
\Big\|\frac{1}{|A_{n,n}|^{2k_n-1}}\int_{\mathcal{J}_{b_n,n}^{i,j}}\eta_{\g,\g'}(a)-E_{\mathcal{D}_n}\Big(\bar{X}^n_{i,\g_i}a\bar{X}^n_{j,\g'_j}\Big)d|\g'|d|\g| \Big\|_{1}
\\&\le  \frac{4k_n^4|B_{b_n}|\sup_{\g \in \G^{k_n}}\|X_{\g}^n\|_2^2}{|A_{n,n}|}\mathcal{T}^m_{n}.
\end{split}\end{equation*} 
Therefore using the triangular inequality we obtain that:
\begin{equation*}\begin{split}&
 \Big\|\frac{1}{{|A_{n,n}|^{2k_n-1}}}\int_{A_{n,n}^{k_n}}\int_{ \overline{B}_{k_n}(\g,b_n)} \eta_{\g,\g'}(a)d|\g'|d|\g|  -\eta^*_n(a) \Big\|_{1}
\\&\le\sum_{i,j\le k_n}\Big\|\frac{1}{|A_{n,n}|^{2k_n-1}}\int_{\mathcal{J}_{b_n,n}^{i,j}}\eta_{\g,\g'}(a)-E_{\mathcal{D}_n}\Big(\bar{X}^n_{i,\g_i}a\bar{X}^n_{j,\g'_j}\Big)d|\g'|d|\g| \Big\|_{1}
\\&+\sum_{i,j\le k_n}\Big\|\frac{1}{|A_{n,n}|}\int_{A_{n,n}\times B(g,b_n)}E_{\mathcal{D}_n}\Big(\bar{X}^n_{i,g}a\bar{X}^n_{j,g'}\Big)d|g'|d|g| -\int_{ B(e,b_n)}E_{\mathcal{D}_n}\Big(\bar{X}^n_{i,e}a\bar{X}^n_{j,g}\Big)d|g|\Big\|_{1}
\\&+\Big\|\int_{ A_{n,n}\setminus B(e,b_n)}E_{\mathcal{D}_n}\Big(\bar{X}^n_{i,e}a\bar{X}^n_{j,g}\Big)d|g|\Big\|_{1}
\\&\overset{(a)}{\le} \frac{4k_n^4|B_{b_n}|\sup_{\g \in \G^{k_n}}\|X_{\g}^n\|_2^2}{|A_{n,n}|}\mathcal{T}^m_n+\sup_{\g\in \G^{k_n}}\big\|X_{\g}^n\big\|_{2}^2 \frac{|A_{n,n}\triangle B_{b_n}A_{n,n}|}{|A_{n,n}|}\\&\quad+{k_n^2\sup_{\g\in \G^{k_n}}\big\|X_{\g}^n\big\|_{2}^2}  \mathcal{R}^s_n[b_n].
\end{split}\end{equation*}  where to obtain (a) we exploited the Cauchy-Swartz inequality.
~Moreover using the definition of $\aleph^{*,s}[\cdot|\G^{k_n}_{n}]$ we obtain that\begin{equation*}\begin{split}&
 \Big\|\frac{1}{{|A_{n,n}|^{2k_n-1}}}\int_{A_{n,n}^{k_n}}\int_{A_{n,n}^{k_n}\setminus \overline{B}_{k_n}(\g,b_n)} \eta_{\g,\g'}(a)d|\g'|d|\g| \Big\|_{1}
 \\&\le \sum_{b\ge b_n}\aleph^{*,s}_n[b|\G_n^{k_n}] ~\frac{\sup_{\g\in \G_n^{k_n}}\|X_{\g}^n\|_2^2}{{|A_{n,n}|^{2k_n-1}}}\int_{A_{n,n}^{k_n}}\int_{A_{n,n}^{k_n}\setminus \overline{B}_{k_n}(\g,b_n)}\hspace{-3mm} \mathbb{I}\big(d_{k_n}(\g,\g')\in [b,b+1)\big)d|\g'|d|\g|
\\&\le k_n^2 \sup_{\g\in \G_n^{k_n}}\|X_{\g}^n\|_2^2 \mathcal{R}^s_{n}[b_n].
\end{split}\end{equation*}
As $a\in \A^{\rm{inv}}_{\mathcal{D}_n(\G_n^{k_n})}$ is arbitrary we have:
\begin{equation*}\begin{split}&\Big\|\eta_n^*(\cdot)-\eta_n(\cdot)\Big|_{\A^{\rm{inv}}_{\mathcal{D}_n(\G_n^{k_n})}}\Big\|_{\rm{op}}\\&\le 
\frac{\sup_{\g \in \G^{k_n}}\|X_{\g}^n\|_2^2}{|A_{n,n}|\nu^3}\Big[k_n^4|B_{b_n}|\sup_{\g \in \G^{k_n}}\|X_{\g}^n\|_2^2\mathcal{T}^m_n+|A_{n,n}\triangle B_{b_n}A_{n,n}|\Big]\\&\quad+
\frac{2k_n^2\sup_{\g\in \G^{k_n}}\Big\|{X_{\g}^n}\Big\|^2_{2} \mathcal{R}^s_{n}[b_n]}{\nu^4}.
\end{split}\end{equation*}

Therefore we finally obtain that:
\begin{equation*}\begin{split}&\Big\|g(1)-g(0)\Big\|_{1}\\&\le 
\frac{1}{|A_{n,n}|\nu^3}\Big[k_n^4|B_{b_n}|\sup_{\g \in \G^{k_n}}\|X_{\g}^n\|_2^2\mathcal{T}^m_n+2|A_{n,n}\triangle B_{b_n}A_{n,n}|\Big[\sup_{\g \in \G^{k_n}}\|X_{\g}^n\|_2^2+ \|Y^{sc,\eta}\|_2^2\Big]\Big]
\\&+\frac{{12|B_{0}|^2}\Big\|Y^{sc,\eta}\Big\|_{3}^3+{18k_n^4|B_{2b_n}|^2}\sup_{\g\in \G^{k_n}}\Big\| {X_{\g}^n}\Big\|_{3}^3}{\sqrt{|A_{n,n}|}\nu^4}
+\frac{5k_n^2\sup_{\g\in \G^{k_n}}\Big\|{X_{\g}^n}\Big\|^2_{2} \mathcal{R}^s_{n}[b_n]}{\nu^4}
\\&~+\frac{ k_n^2\sup_{\g\in \G^{k_n}}\Big\|X_{\g}^n\Big\|^2_{2}|B_{b_n}|\aleph_n^{*,j}[b_n|\mathbb{G}^{k_n}_n]}{\nu^3}.
\end{split}\end{equation*}
\end{proof}


\subsection{Proof of \cref{good}}

\begin{proof}Let $\z,\z'\in \mathbb{Z}^d$ be integers and let $K\subset\mathbb{Z}^d$ be a subset satisfying $$\min_{\z^1\in  \{\z,\z'\}, \z^2\in K}d(\z^1,\z^2)\ge b.$$ Choose $Y^{1,m},Y^{2,m}\in \mathcal{F}_0(X^m_{K})$. Let $(\tilde X^{\z,m},\tilde X^{\z',m})\overset{d}{=}(X^{\z,m},X^{\z',m})$ be a copy independent from $Y^{1,m}$ and $Y^{2,m}$. Then by definition of the alpha-mixing coefficients we have:
\begin{equation*}\begin{split}&\Big|\tau_m\Big(\frac{X^{\z,m}}{\sqrt{m}}Y^{1,m}\frac{X^{\z',m}}{\sqrt{m}}Y^{2,m}\Big)-\tau_m\Big(\frac{\tilde X^{\z,m}}{\sqrt{m}}Y^{1,m} \frac{\tilde X^{\z',m}}{\sqrt{m}} Y^{2,m}\Big)\Big|
\\&\overset{(a)}{\le} \Big|\frac{1}{m^2}\sum_{i,k,l,j\le m} \rm{cov}\Big(X^{\z,m}_{i,l}X^{\z',m}_{j,k},Y^{1,m}_{l,j}Y^{2,m}_{k,i}\Big)\Big|
\\&\overset{(b)}{\le}\max_{i,j}\|X_{i,j}^{1,m}\|_{2+\epsilon}^2 \alpha_{m}^d[b]^{\frac{\epsilon}{2+\epsilon}}.
\end{split}\end{equation*} where (a) is a consequence of the equation $\tau_m(\cdot)=\frac{1}{m}\mathbb{E}(\rm{Tr}(\cdot))$; and where to get (b) we used \cref{chanson} and the fact that $\|Y^{1,m}\|_{\infty}$ and $\|Y^{2,m}\|_{\infty}\le 1$

\noindent Let $(U_1,U_2)$ be two independent random unitary matrices sampled according to the Haar measure.
As the distribution of  $(X^{\z,m})$ is invariant under unitary conjugaison we have $$(U_1\tilde X^{\z,m}U_1^*,~ U_1\tilde X^{\z',m}U_1^*)\overset{d}{=}(\tilde X^{\z,m},\tilde X^{\z',m}).$$ Define $(\tilde Y^{1,m},\tilde Y^{2,m}):=(U_2^*Y^{1,m} U_2, ~U_2^*Y^{2,m} U_2)$ we have 
\begin{equation*}\begin{split}&\Big|\tau_m\Big(\tilde X^{\z,m}Y^{1,m}\tilde X^{\z',m}Y^{2,m}\Big)\Big|
\\&=\frac{1}{m}\Big|\mathbb{E}\Big[Tr\Big(U_1 \tilde X^{\z,m}U_1^*~U_2\tilde Y^{1,m}U_2^*~ U_1 \tilde X^{\z',m}U_1^*~U_2\tilde Y^{2,m}U_2^*\Big)\Big]\Big|
\\&\overset{(a)}{\le} \frac{C}{m} 
\end{split}\end{equation*}
where $C$ is a constant that does not dependent on $m$ and where (a) is obtained by exploiting the asymptotic liberating property of unitary matrices \cite{anderson2014asymptotically}.
Therefore we obtain that $$\aleph^{j}_m[b]\le \frac{\max_{i,j}\|X_{i,j}^{1,m}\|_{2+\epsilon}^2}{\big\|X^{1,m}/\sqrt{m}\big\|_{2}^2} \alpha^d_{m}[b]^{\frac{\epsilon}{2+\epsilon}} +\frac{C}{m}. $$
Similarly one can prove that for all $\z,\z'\in \mathbb{Z}$ and $K\subset\mathbb{Z}$ such  $\min_{\z^1\in K\bigcup \{\z'\}}|\z-\z^1|\ge b$ we have
\begin{equation*}\begin{split}\Big|\tau_m\Big(\frac{X^{\z,m}}{\sqrt{m}}Y^{1,m}\frac{X^{\z',m}}{\sqrt{m}}Y^{2,m}\Big)\Big|
&\le \max_{i,j}\|X_{i,j}^{1,m}\|_{2+\epsilon}^2 \alpha^d_{m}[b]^{\frac{\epsilon}{2+\epsilon}}.
\end{split}\end{equation*} 
Therefore we obtain that $$\aleph^{j}_m[b]\le \frac{\max_{i,j}\|X_{i,j}^{1,m}\|_{2+\epsilon}^2}{\big\|X^{1,m}/\sqrt{m}\big\|_{2}^2} \alpha^d_{m}[b]^{\frac{\epsilon}{2+\epsilon}} . $$
\end{proof}
\subsection{Proof of \cref{eq_suis}}
\begin{proof}
We first establish that if $(K_g)$ defines a group action on $\mathcal{A}_{\tau}$ then for all $g_1,\dots,g_k\in \mathbb{G}$ the following holds ${(X_{g_1},\dots,X_{g_k})}\overset{d}{=}{(X_{gg_1},\dots,X_{gg_k})}$. In this goal we firstly remark that as $K_g(\cdot)$ is a *-automorphism if $(X_{g_1},\dots,X_{g_k})\in \A^k$ then for all $P\in \mathbb{C}<x_1,\dots,x_k,x^*_1,\dots,x^*_k>$  we have $$P(X_{gg_1},\dots,X_{gg_k},X^*_{gg_1},\dots,x^*_{gg_k})=K_g\Big(P(X_{g_1},\dots,X_{g_k},X^*_{g_1},\dots,X^*_{g_k})\Big).$$ Therefore we have $$\tau\Big(P(X_{gg_1},\dots,X_{gg_k},X^*_{gg_1},\dots,x^*_{gg_k})\Big)=\tau\Big(P(X_{g_1},\dots,X_{g_k},X^*_{g_1},\dots,X^*_{g_k})\Big).$$
Which implies the equality in distribution.
If $(X_{g_1},\dots,X_{g_k})$ does not belong to $\A^k$, then by definition of $\mathcal{A}_{\tau}$ we know that there is a sequence $(p^n_{1},\dots,p^n_{k})$ of projectors of $\mathcal{H}$ such that:
\begin{itemize}
    \item $(X_{g_1} p_{1}^n,\dots, X_{g_k}p_{k}^n)\in \A^k$;
    \item $\max_{i\in \dbracket{k}}\tau(1-p_i^n)\rightarrow 0$.
\end{itemize}
As $X_{g_1}p_1^n,\dots,X_{g_k}p_k^n\in \A$ which as we just proved implies that the following holds
$$(X_{g_1} p_{1}^n,\dots, X_{g_k}p_{k}^n)\overset{d}{=}(K_g(X_{g_1} p_{1}^n),\dots, K_g(X_{g_k}p_{k}^n)).$$
As $K_g$ is a *-automorphism the following holds \begin{equation*}
    \begin{split}
K_g\Big(X_{g_1} p_{1}^n,\dots, X_{g_k}p_{k}^n\Big)&=\Big(K_g(X_{g_1}) K_g(p_{1}^n),\dots, K_g(X_{g_k})K_g(p_{k}^n)\Big)
\\&=\Big(X_{gg_1} K_g(p_{1}^n),\dots, X_{gg_k}K_g(p_{k}^n)\Big).\end{split}\end{equation*}

\noindent Using hypothesis $H_2$ we have $\tau(1-K_g(p_i^n))=\tau(1-p_i^n)$; which implies that $\max_{i\le k}\tau(1-K_g(p_i^n))\rightarrow 0$.
Moreover we observe that $K_g(p_i^n)$ is a projector as it satisfies $K_g(p_i^n)^2=K_g({p_i^n}\times p_i^n)=K_g(p_i^n).$
Therefore using the definition of multivariate distributions we obtain that the first claim of \cref{eq_suis} holds.

\noindent The next goal is to prove that the second part of \cref{eq_suis} holds. Let $\{g_i\}\subset\mathbb{G}$ be a dense countable  subset of $\mathbb{G}$. For all $g^*\in \mathbb{G}$ there are sequences $(p_{g_i}^n)$ and $(p_{g_i}^{n*})$ of projectors of $\mathcal{H}$ such that
\begin{itemize}
    \item $\tau(1-{p^{n*}_{g_i}}),\tau(1-p_{g_i}^n)\rightarrow 0$ for all $i\in \mathbb{N}$;
    \item $(Z_{g_1}p_{g_1}^n,\dots,Z_{g_k}p_{g_k}^n)\overset{d}{=}(Z_{g*g_1}p_{g_1}^{*n},\dots,Z_{g^*g_k}p_{g_k}^{*n})$ for all $k\in \mathbb{N}$.
\end{itemize}
Then according to  \cite{nica2006lectures} theorem 4.10 we know that there is $K_{g^*}$ a $*-$automorphism of $\B$ such that $K_{g^*}(Z_{g_i}p_{g_i}^n)=Z_{g^*g_i}p_{g^*g_i}^n$ for all $i\le n$ and all $n\in \mathbb{N}$. As $K_g^*$  is a *-automorphism we have that it is a.e continuous. $$K_{g^*}(Z_{g_i})=\lim_{n\rightarrow \infty} K_{g^*}(Z_{g_i}p^n_{g_i})=\lim_{n\rightarrow \infty}Z_{g*g_i}p^n_{g_i} =Z_{g^*g_i}.$$ Therefore we obtain that $\forall i\in \mathbb{N}$ we have $K_{g^*}(Z_{g_i})=Z_{g^*g_i}$. Using the density of the subset $\{g_i\}$ we get that $K_{g^*}$ satisfies $H_1$ and is such that: $K_{g^*}(Z_g)=Z_{g^*g}$ for all $g\in\G$. This defines a net $(K_g)$ of *-automorphims that satisfy property $(H_2)$ on $\B$. 

\end{proof}

\subsection{Proof of \cref{good_2}}
\begin{proof} 

Let $K\subset \mathbb{N}\setminus \{1,2\}$ and let $Y^{1,n},Y^{2,n}\in \mathcal{F}_0(K)$. Using the independence of $X^{1,n}$ and $\{X^{k,n},~k\in \mathbb{N}\setminus \{1\}\}$ we obtain that:
\begin{equation*}
    \begin{split}&
           \tau\Big(X^{1,n}Y^{1,n}X^{2,n}Y^{2,n}\Big)
           =0.
    \end{split}
\end{equation*}
Therefore we obtain that $\aleph^s_n[b|\Z]=0$ for all $b\ge 1$. To find an upper-bound for $\aleph^j_m[1|\Z]$ we  need to bound $\Big|\tau\Big(\frac{1}{\sqrt{n}} X^{1,n}Y^{1,n}\frac{1}{\sqrt{n}}X^{1,n}Y^{2,n}\Big)\Big|$.
By classical algebra operations we can see that:
\begin{equation*}
    \begin{split}&
       \Big| \tau\Big(\frac{X^{1,n}}{\sqrt{n}}Y^{1,n}\frac{X^{1,n}}{\sqrt{n}}Y^{2,n}\Big)\Big|
        \\&=\Big|\frac{1}{n^2}\sum_{i,k,l,j\le n } \mathbb{E}\Big(X^{1,n}_{i,l}Y^{1,n}_{l,j}X^{1,n}_{j,k}Y^{2,n}_{k,i}\Big)\Big|
         \\&=\Big|\frac{1}{n^2}\sum_{i,k,l,j\le n } \mathbb{E}\Big(X^{1,n}_{i,l}X^{1,n}_{j,k}\Big)\mathbb{E}\Big(Y^{1,n}_{l,j}Y^{2,n}_{k,i}\Big)\Big|
            \\&\le \frac{1}{n^2}\sum_{i,k,l,j\le n }\Big| \mathbb{E}\Big(X^{1,n}_{i,l}X^{1,n}_{j,k}\Big)\Big|\Big|\mathbb{E}\Big(Y^{1,n}_{l,j}Y^{2,n}_{k,i}\Big)\Big|
                     \\&\overset{(a)}{\le} \frac{\|\Sigma_{X^{1,n}}\|_{\rm{opt}}}{n^2}
    \end{split}
\end{equation*}where to get (a) we used the fact that $\|Y^{1,n}\|_{\infty},\|Y^{2,n}\|_{\infty}\le 1$.
Therefore we have proven that $$\aleph^j_n[b]\le\frac{\|\Sigma_{X^{1,n}}\|_{\rm{opt}}}{n^2},\qquad \forall b\ge1.$$
\end{proof}
\subsection{Proof of \cref{good_3}}
\begin{proof} 
 
We remark that for all jointly invariant random matrices $X$, the non-commutative conditional expectation $E_{n}(X)$ is a $n\times n$ random matrix whose diagonal entries are equal to $\mathbb{E}\Big(X_{1,1}|\mathbb{S}(\mathbb{N})\Big)$.

Let $K\subset \mathbb{N}\setminus \{1,2\}$ and let $Y^{1,n},Y^{2,n}\in \mathcal{F}_0(K)$. Using the conditional independence of $X^{1,n}$ and $\{X^{k,n},~k\in \mathbb{N}\setminus \{1\}\}$ we obtain that:
\begin{equation*}
    \begin{split}&
           \tau\Big(\Big|E_n\Big(X^{1,n}Y^{1,n}X^{2,n}Y^{2,n}\Big)\Big|\Big)
           =0.
    \end{split}
\end{equation*}
Therefore we obtain that $\aleph^s_n[1|\Z]=0$. We now bound $\Big|\tau\Big(\frac{1}{\sqrt{n}} X^{1,n}Y^{1,n}\frac{1}{\sqrt{n}}X^{1,n}Y^{2,n}\Big)\Big|$. In this goal we remark that

\begin{equation*}
    \begin{split}&
          \tau\Big(\Big|E_n\Big(\frac{1}{\sqrt{n}}X^{1,n}Y^{1,n}\frac{1}{\sqrt{n}}X^{1,n}Y^{2,n}\Big)\Big|\Big)
        \\&=\frac{1}{n} \mathbb{E}\Big(\Big|\sum_{k,l,j\le n }\mathbb{E}\Big(X^{1,n}_{1,l}Y^{1,n}_{l,j}X^{1,n}_{j,k}Y^{2,n}_{k,1}\Big|\mathbb{S}(\mathbb{N})\Big)\Big|\Big)
         \\&=\frac{1}{n} \mathbb{E}\Big[\Big|\sum_{k,l,j\le n }\mathbb{E}\Big(X^{1,n}_{1,l}X^{1,n}_{j,k}\Big|\mathbb{S}(\mathbb{N})\Big)\mathbb{E}\Big(Y^{1,n}_{l,j}Y^{2,n}_{k,1}\Big|\mathbb{S}(\mathbb{N})\Big)\Big|\Big]
                \\&\overset{(a)}{\le} \frac{2}{n}\Big\|\sum_{l,j\le n } \mathbb{E}\Big(X^{1,n}_{1,l}X^{1,n}_{1,j}\Big|\mathbb{S}(\mathbb{N})\Big)\mathbb{E}\Big(Y^{1,n}_{l,j}Y^{2,n}_{1,1}\Big|\mathbb{S}(\mathbb{N})\Big)\Big\|
                \\&\quad~+ \frac{2}{n}\Big\|\sum_{l,j\le n } \mathbb{E}\Big(X^{1,n}_{1,l}X^{1,n}_{1,j}\Big|\mathbb{S}(\mathbb{N})\Big)\mathbb{E}\Big(Y^{1,n}_{1,j}Y^{2,n}_{1,l}\Big|\mathbb{S}(\mathbb{N})\Big)\Big\|
                      \\&\overset{(b)}{\le} \frac{4\sup_{i,j}\|X^{1,n}_{i,j}\|_{2}^2}{n}\Big\| \mathbb{E}\Big(\sum_{l\le n }|Y^{1,n}_{l,1}Y^{2,n}_{1,1}|\Big|\mathbb{S}(\mathbb{N})\Big)\Big\|
                \\&+ \frac{2\sup_{i,j}\|X^{1,n}_{i,j}\|_{2}^2}{n}\Big\|\sum_{l\le n}\big|Y^{1,n}_{1,l}\big|\sum_{j \le n}\big|Y^{2,n}_{1,j}\big|\Big\|_{\infty}
                      \\&\overset{(c)}{\le} \frac{6}{n} \sup_{i,j}\|X^{1,n}_{i,j}\|^2_2
    \end{split}
\end{equation*}where  (a) (resp. b) comes from the conditional independence of $X_{i,l}^{1,n}$ and $X^{1,n}_{k,j}$ (resp.of  $Y_{i,l}^{1,n}$ and $Y^{2,n}_{k,j}$) when the indexes are distinct $\{k,j\}\bigcap \{l,i\}=\emptyset$; and to get (c) we used the fact that  $\|Y^{1,n}\|_{\infty},\|Y^{2,n}\|_{\infty}\le 1.$
Therefore we have proven that $$\aleph^j_n[b|\mathbb{S}(\mathbb{N})]\le  \frac{6}{n}\frac{ \sup_{i,j}\|X^{1,n}_{i,j}\|^2_2}{\|X^{1,n}/\sqrt{n}\|_2^2},\qquad \forall b\ge1.$$
\end{proof}
\subsection{Proof of \cref{luna}}
\begin{proof}
To prove the desired result we need to check that the net $(X_{\g})$ verifies condition $(H_3)$. For ease of notation we write $Z_{\g}:=(Y_{\g_1},\dots,Y_{\g_k})$ for all $\g\in \G^k$.
We note that for all $\g^1,\dots,\g^z\in \G^k$ we have: $$\Big(Z_{\g^1},\dots,Z_{\g^z}\Big)\overset{d}{=}\Big(Z_{\g'\g^1},\dots,Z_{\g'\g^z}\Big),\qquad \forall \g'\in \mathcal{D}(\G^k).$$Therefore if $\Phi\in \mathbb{C}\big<x_1,\dots,x_k,x_1^*,\dots,x^*_k>$  was a polynomial then using the definition of multivariate distributions we would obtain that $$\Big(X_{\g^1},\dots,X_{\g^z}\Big)\overset{d}{=}\Big(X_{\g'\g^1},\dots,X_{\g'\g^z}\Big),\qquad \forall \g'\in \mathcal{D}(\G^k).$$
In general, by the Stone-Weierstrass theorem as $\Phi(\cdot)$ is continuous there is a sequence of polynomials $(\Phi_n:\A^k\rightarrow \A)\in \mathbb{C}\big<x_1,\dots,x_k,x_1^*,\dots,x^*_k>^{\mathbb{N}}$ such that $\big\|\Phi_n(Y_{\g_1},\dots, Y_{\g_k})-X_{\g}\|_{\infty}\rightarrow 0$ for all $\g\in \G^k$. This implies that condition $(H_3)$ holds. 
\end{proof}
\subsection{Proof of \cref{prop_800}}
\begin{proof}
Firstly we remark that as $Z$ is jointly quantum invariant then it is jointly exchangeable. This notably implies that $\aleph^m[1|\Z^2]=0$.
\\\noindent
Moreover let $\z^1,\z^2\in \mathbb{Z}^2$ and let $K\subset \mathbb{Z}^2$ be such that $\bar{d}_k(\{\z^1\},~\{\z^2\}\cup K)\ge 1$; and choose $Y_1,Y_2\in \mathcal{F}_0^k[Z_K]$. As $Z$ is jointly exchangeable we have: $$E_{\mathcal{D}}\Big[Z_{\z^1}Y_1Z_{\z^2}Y_2\Big]=E_{\mathcal{D}}\Big[E_{\mathcal{D}}[Z_{\z^1}]Y_1Z_{\z^2}Y_2\Big]=0.$$ This implies that $\aleph^{*,s}[1|\Z^2]=0.$ 
\\\noindent Let $\z^1,\z^2\in \mathbb{Z}^2$ and  $K\subset \mathbb{Z}^2$ be a finite set; and choose $Y_1,Y_2\in \mathcal{F}_0^k[Z_K]$.
Suppose that $\bar{d}_k(\{\z^1,\z^2\},K)\ge 1$. We remark that as $\aleph^{*,s}[1|\G^k]=0$ we have $$E_{\mathcal{D}}\Big[Z_{\z^1}Y_1Z_{\z^2}Y_2\Big]=0 \rm{\quad if\quad} \{\z^1_1,\z^1_2\}\cap \{\z^2_1,\z^2_2\}=\emptyset.$$ Therefore we can suppose without loss of generality that $\z^1_1=\z^2_1=1$. We denote the diameter as $\mathcal{S}:=\max_{l\in \{1,2\}}\max_{\z\in \{\z^1,\z^2\}\cup K}|\z_l|$. Let  $k\ge \mathcal{S}$ be an integer and $u:=(u_{i,j})$  satisfying the defining relations of $A_s(k)$ and such that $u_{i,z}=\delta_{i,z}\mathbf{1}_{A_s(k)}$ for all $z\in \{\z_l|\z\in K,l=1,2\}.$ As $Z$ is jointly quantum invariant we have  $$E_{\mathcal{D}}(Z_{\z^1}Y_1Z_{\z^2}Y_2)=\sum_{j\le k} u_{1,j}E_{\mathcal{D}}(Z_{(j,\z^1_2)}Y_1Z_{(j,\z^2_2)}Y_2)\overset{(a)}{=}\sum_{j,l\le k} u_{1,j}u_{1,l}E_{\mathcal{D}}(Z_{(j,\z^1_2)}Y_1Z_{(l,\z^2_2)}Y_2)$$ where to get (a) we used the fact that $u$ is a unitary magic. As this holds for arbitrary $k$ we obtain that $$E_{\mathcal{D}}(Z_{\z^1}Y_1Z_{\z^2}Y_2)=E_{\mathcal{D}}(E_{\mathcal{D}}(Z_{\z^1})Y_1E_{\mathcal{D}}(Z_{\z^2})Y_2)=0.$$ Finally as this holds for all finite $K$ we have $\aleph^{*,j}[1|\G^k]=0$  

\end{proof}
\subsection{Proof of \cref{u_stat_1}}
\begin{proof}Let $\g\in \G^k$ we shorthand $\mathcal{S}(\g):=\{\g_1,\dots,\g_k\}$.
As $\Phi$ is a polynomial we have $\Phi(Y_{\g_1},\dots,Y_{\g_k})\in \mathcal{F}(Y_{\mathcal{S}(\g)})$. The results of \cref{u_stat_1} are a direct consequence of this.
\end{proof}

\subsection{Proof of \cref{marre_2}}
\begin{proof}
We start by  bounding $\aleph^{*}_n[\cdot|\Z^k] $. Let $\mathbf{z},\mathbf{z}'\in \Z^k$ and $\tilde G\subset \Z^k$ be such that $$\min_{i,j\in \{1,2\}} \min_{\mathbf{t}\in \tilde G}\big|\mathbf{z}_i-\mathbf{t}_j|\ge b~\rm{and}~\min_{i,j\in \{1,2\}} \min_{\mathbf{t}\in \tilde G}\big|\mathbf{z}'_i-\mathbf{t}_j|\ge b.$$ Let $Z^1,Z^2\in \mathcal{F}_0^2(X_{\tilde G})$ and let  $(\tilde Z^{\mathbf{z},n},\tilde Z^{\mathbf{z'},n})\overset{d}{=}( Z^{\mathbf{z},m}, Z^{\mathbf{z'},n})$ be  independent from $Y_1,Y_2$. By definition of the strong mixing coefficients of $(X^{i,n})$ and using \cref{chanson} we have:
\begin{equation*}\begin{split}&\Big|\tau\Big(\frac{Z^{\mathbf{z},n}}{\sqrt{n}}Y_1\frac{Z^{\mathbf{z'},n}}{\sqrt{n}}Y_2\Big)-\tau\Big(\frac{\tilde Z^{\mathbf{z},n}}{\sqrt{n}}Y_1 \frac{\tilde Z^{\mathbf{z'},n}}{\sqrt{n}}Y_2\Big)\Big|
\\&\le \Big|\frac{1}{n^2}\sum_{i,k,l,j\le n} \rm{cov}\Big(Z^{\mathbf{z},n}_{i,l}Z^{\mathbf{z'},n}_{j,k},Y^1_{l,j}Y^2_{k,i}\Big)\Big|
\\&\overset{(a)}{\le}\max_{i,j\le n}\max_{\mathbf{z}\in \Z^k}\|Z_{i,j}^{\mathbf{z},n}\|_{L_{2+\epsilon}}^2 \alpha_{n}[b]^{\frac{\epsilon}{2+\epsilon}}.%
\end{split}\end{equation*}
Let $U_1,U_2$ be two random Haar-measured unitary matrices sampled. As the distribution of  $(X_i^n)$ is invariant under unitary conjugaison we have $$(U_1\tilde Z^{\mathbf{z},m}U_1^*,~ U_1\tilde Z^{\mathbf{z'},n}U_1^*)\overset{d}{=}(\tilde Z^{\mathbf{z},n},\tilde Z^{\mathbf{z'},n}).$$ Define $(\tilde Y_1,\tilde Y_2):=(U_2^*Y_1 U_2, ~U_2^*Y_2 U_2)$ we have 
\begin{equation*}\begin{split}\Big|\tau\Big(\frac{\tilde Z^{\mathbf{z},n}}{\sqrt{n}}Y_1\frac{\tilde Z^{\mathbf{z'},n} }{\sqrt{n}}Y_2\Big)\Big|
=&\frac{1}{n}\Big|\mathbb{E}\Big[U_1 \tilde Z^{\mathbf{z},n} U_1^*~U_2\tilde Y_1U_2^*~ U_1 \tilde Z^{\mathbf{z'},n} U_1^*~U_2\tilde Y_2U_2^*\Big]\Big|
\\&\overset{(a)}{\le} \frac{C}{n}\sup_{\mathbf{z}\in \Z^k}\|Z^{\mathbf{z},m}\|_2^2.
\end{split}\end{equation*}
where $C$ is a constant that does not dependent on $n$ and where (a) is obtained by exploiting the asymptotic liberating property of unitary matrices \cite{anderson2014asymptotically}.
Therefore we obtain that 
 $$\aleph^{*,j}_n[b|\Z^k] \le \frac{C}{n}+\sup_{\z\in \Z^k} \frac{\sup_{i,j\le n}\|Z^{\z,m}_{i,j}\|_{L_{2+\epsilon}}^2}{\|Z^{\mathbf{z},n}/\sqrt{n}\|_2^2} \alpha_n^{\frac{\epsilon}{2+\epsilon}}(b).$$
 Similarly we can prove that:$$\aleph^{*,s}_n[b|\Z^k] \le \sup_{\z\in \Z^k} \frac{\sup_{i,j\le m}\|Z^{\z,n}_{i,j}\|_{L_{2+\epsilon}}^2}{\|Z^{\mathbf{z},n}/\sqrt{n}\|_2^2} \alpha_n^{\frac{\epsilon}{2+\epsilon}}(b).$$
\noindent We now bound $\aleph^{m}_n[\cdot|\Z^k]$. Let $i,j\in \mathbb{Z}$ and $\mathbf{z}\in \Z^k$ such that  $$\min(|i|,|j|)\ge b,\qquad \min_{k\in \{1,2\}}\min(|i-\mathbf{z}_k|,|j-\mathbf{z}_k|)\ge b.$$As $(Y^{i,n})$ is ergodic we have  $\A_{\Z^k}^{\rm{inv}}=\mathbb{C}\mathbf{1}_{\A}$. Moreover using \cref{chanson} we have \begin{equation*}\begin{split}&\frac{1}{n}\Big|\tau(Z^{(i,0),n}Z^{\mathbf{z},n})-\tau(Z^{(j,0),n}Z^{\mathbf{z},n}))\Big|
\\&\le \frac{1}{n^2}\sum_{i,j\le n}\Big|\mathbb{E}(Z^{(i,0),n}_{i,j}Z^{\mathbf{z},n}_{j,i})-\mathbb{E}(Z^{(j,0),n}_{i,j}Z^{\mathbf{z},n}_{j,i}))\Big|
\\& \le  \sup_{\z\in \Z^k}\sup_{i,j\le n}\|Z^{\z,n}_{i,j}\|_{L_{2+\epsilon}}^2 \alpha_n^{\frac{\epsilon}{2+\epsilon}}(b) .\end{split}\end{equation*}This implies that
 $$\aleph^{m}_n[b|\Z^k] \le \sup_{\z\in \Z^k}\frac{1}{\|Z^{\mathbf{z},n}/\sqrt{n}\|_2^2}\sup_{i,j}\|Z^{\z,n}_{i,j}\|_{L_{2+\epsilon}}^2 \alpha_n^{\frac{\epsilon}{2+\epsilon}}(b).$$
 
\end{proof}
\subsection{Proof of \cref{marre_3}}
\begin{proof} 
Let $(\z,\z',\tilde Z)\in \mathcal{C}^i_{\Z^k}[1]$  and let $Y^{1,n},Y^{2,n}\in \mathcal{F}^{\Z^k}_0(\tilde Z)$.  Using the independence of the underlying process $(X^{k,n})$  we obtain that if $\min_{i,j}d(\z_i,\z'_j)\ge 1 $ then
\begin{equation*}
    \begin{split}&
           \tau\Big(Z^{\z,n} Y^{1,n}Z^{\z',n}Y^{2,n}\Big)
           =0.
    \end{split}
\end{equation*}
Therefore we obtain that $\aleph^{*s}_m[1|\Z^k]=0$ we only need to bound $\Big|\tau\Big( Z^{\z,n}Y^{1,n}Z^{\z',n}Y^{2,n}\Big)\Big|$ for vectors $\z,\z'\in \mathbb{Z}^k$ such that $\bar{d}_k(\{\z,\z'\},\tilde Z)\ge 1$. In this goal we introduce a few notations, for any integers we designate $(k_1,\dots,k_m)$ by $k_{1:m}$. Moreover given a sequence of integers $i_1,\dots i_{d_1}\le n$ and an element $\z\in \dbracket{n}^k$, we write $$X^{\z,i_{1:d_1}}:=\frac{1}{n^{\frac{d_1}{2}}}X^{\z_{i_1},n}\times \dots\times X^{\z_{i_{d_1},n}}.$$   Exploiting the fact that $\Phi$ is a polynomial of degree $d$, we observe that it is enough to  study $\Big|\tau\Big(\big[X^{\z,i_{1:d_1}}-\tau(X^{\z,i_{1:d_1}})\big]Y^{1,n}\big[X^{\z',j_{1:d_2}}-\tau(X^{\z,i_{1:d_1}})\big]Y^{2,n}\Big)\Big|$ for all indexes $i_1,\dots,i_{d_1}$ and $j_1,\dots,j_{d_2}$ and all integers $d_1,d_2\le d$. We write $d':=d_1+d_2$.
We suppose without loss of generality that $ \mathbb{E}(X^{\z,i_{1:d_1}}_{i,j})=\mathbb{E}(X^{\z',j_{1:d_2}}_{i,j})=0$ for all $i,j\le n$. By classical algebra operations we observe that:
\begin{equation*}
    \begin{split}&
       \Big| \tau\Big(X^{\z,i_{1:d_1}}Y^{1,n}X^{\z',j_{1:d_2}}Y^{2,n}\Big)\Big|
       \\&=\Big|\frac{1}{n}\sum_{l_{1:4}\le n} \mathbb{E}\Big(X^{\z,i_{1:d_1}}_{l_1,l_2}Y^{1,n}_{l_2,l_3}X^{\z',j_{1:d_2}}_{l_3,l_4}Y^{2,n}_{l_4,l_1}\Big)\Big|
         \\&\overset{(a)}{=}\Big|\frac{1}{n}\sum_{l_{1:4}\le n} \mathbb{E}\Big(X^{\z,i_{1:d_1}}_{l_1,l_2}X^{\z',j_{1:d_2}}_{l_3,l_4}\Big)\mathbb{E}\Big(Y^{1,n}_{l_2,l_3}Y^{2,n}_{l_4,l_1}\Big)\Big|
    \end{split}
\end{equation*}where to get (a) we used the fact that $\bar{d}_k(\{\z,\z'\},\tilde Z)\ge 1$, which implies the independence between $\Big(Y^{1,n}, Y^{2,n}\Big)$ and $\Big(X^{\z,i_{1:d_1}},X^{\z',j_{1:d_2}}\Big)$.

Moreover for all $l_1,l_2,l_3,l_4\le n$ we denote $$s^n_{l_{1:4}}:=\{k_1,\dots,k_{d'+3}\in \dbracket{n}\Big|k_1=l_1,k_{d_1+1}=l_2,k_{d_1+2}=l_3,k_{d'+3}=l_4\};$$$$s^{n,\le}_{l_{1:4}}:=\{k_1,\dots,k_{d'+3}\in s^n_{l_{1:4}}\Big|k_1\le \dots\le k_{d_1+1},~k_{d_1+2}\le \dots\le k_{d'+3}\}.$$ Using basic algebra manipulations we remark that \begin{equation*}
    \begin{split}&
       \Big| \mathbb{E}\Big(X^{\z,i_{1:d_1}}_{l_1,l_2}X^{\z',j_{1:d_2}}_{l_3,l_4}\Big)\Big|
        \\&=\Big|\frac{2^{2d'}}{n^{\frac{d'}{2}}}\sum_{{k_{1:d'+3}\in s^{n,\le}_{l_{1:4}}}} \rm{cov}\Big(X^{\z_{i_1},n}_{k_1,k_2}\times\dots \times X^{\z_{i_{d_1}},n}_{k_{d_1},k_{d_1+1}} ,~X^{\z'_{j_1},n}_{k_{d_1+2},k_{d_1}+3}\times\dots X^{\z'_{j_{d_2}},n}_{k_{d'+1},k_{d'+2}}\Big)\Big|.
    \end{split}
\end{equation*}
To simplify this, we exploit the fact that the diagonals of the matrices $(X^{i,n})$ are assumed to be independent. We denote  $\mathcal{D}_r:=\{(i,j)||i-j|=r\}$ the $r$-th diagonal. We observe that $(\mathcal{D}_r)$ forms a partition of $\mathbb{N}^2$. For all $k_1,\dots,k_{d'+3}\in s^{n,\le}_{l_{1:4}}$ we write $$\mathcal{S}^1_r:=\{i\in \mathbb{N}\big|i\le d_1~\rm{s.t} |k_i-k_{i+1}|=r\},\qquad \mathcal{S}^2_r:= \{i\big|i> d_1~\rm{s.t}~ |k_i-k_{i+1}|=r\};$$ and finally denote $\mathcal{I}_l:=\{r\big| |\mathcal{S}^l_r|>0\}$ for $l=1,2$. 
As the diagonals of the matrices $(X^{i,n})$ are assumed to be independent we obtain that  if $\mathcal{I}_1=\mathcal{I}_2$ then we have:
\begin{equation*}
    \begin{split}&
     \Big|  \rm{cov}\Big(X^{\z_{i_1},n}_{k_1,k_2}\times\dots\times X^{\z_{i_{d_1}},n}_{k_{d_1},k_{d_1+1}} ,~X^{\z'_{j_1},n}_{k_{d_1+2},k_{d_1}+3}\times\dots\times X^{\z'_{j_{d_2}},n}_{k_{d'+1},k_{d'+2}}\Big)\Big|
       \\&= \prod_{r\in \mathcal{I}_1}\Big| \rm{cov}\Big(\prod_{i\in \mathcal{S}^1_r}X^{\z_i,n}_{k_i,k_{i+1}},\prod_{i\in \mathcal{S}^2_r}X^{\z'_i,n}_{k_i,k_{i+1}}\Big)\Big|
       \\&\le\sup_{i,j\in \mathbb{N}}\|X^{1,n}_{i,j}\|_{\infty}^{d_1+d_2}\sum_{r\in\mathcal{I}_1}\max_r\alpha^r_n\big[\min_{(i,j)\in \mathcal{S}^1_r\times \mathcal{S}^2_r} d(k_i,k_j)\big]
    \end{split}
\end{equation*}

Using again that the diagonals of $(X^{i,n})$ are independent we obtain that if there is $r\in \mathbb{N}$ such that $\mathcal{S}^1_r=\emptyset$ but $\mathcal{S}^2_r\ne \emptyset$ we have \begin{equation*}
    \begin{split}&
       \rm{cov}\Big(X^{\z_{i_1},n}_{l_1,l_2}\dots X^{\z_{i_{d_1}},n}_{l_{d_1},l_{d_1+1}} ,~X^{\z'_{j_1},n}_{l_{d_1+2},l_{d_1}+3}\dots X^{\z'_{j_{d_2}},n}_{l_{d'+1},l_{d'+2}}\Big)
       =0.
    \end{split}
\end{equation*}
We note that for all $b\in \mathbb{N}$ we have: $$\Big|\Big\{k_{1:d'+3}\in s^{n,\le}_{1:4}\big|\mathcal{I}_1=\mathcal{I}_2,~\min_{(i,j)\in \mathcal{S}^1_r\times\mathcal{S}^2_r}d(k_i,k_j)=b\Big\}\Big|\le d! n^{min(d_1,d_2)-2}.$$

          Therefore we obtain that 
\begin{equation*}
    \begin{split}&
\Big|\frac{1}{n}\sum_{l_{1:4}\le n} \mathbb{E}\Big(X^{\z,i_{1:d_1}}_{l_1,l_2}X^{\z',j_{1:d_2}}_{l_3,l_4}\Big)\mathbb{E}\Big(Y^{1,n}_{l_2,l_3}Y^{2,n}_{l_4,l_1}\Big)\Big|
      \\&\le \frac{ [2]^dd! }{n^2}\sup_{i,j\in \mathbb{N}}\|X^{1,n}_{i,j}\|_{\infty}^{2d}{\max_r\sum_b \alpha^r_n[b]}\sum_{l_{1:4}\le n} \Big|\mathbb{E}\Big(Y^{1,n}_{l_2,l_3}Y^{2,n}_{l_4,l_1}\Big)\Big|
      \\&\overset{(a)}{\le} [2]^{2d}d! \sup_{i,j\in \mathbb{N}}\|X^{1,n}_{i,j}\|_{\infty}^{2d}\frac{\max_r\sum_b \alpha^r_n[b]}{n} 
    \end{split}\end{equation*}
    where to get $(a)$ we exploited the fact that $\|Y^{1,n}\|_{\infty}, \|Y^{2,n}\|_{\infty}\le 1$.
Therefore there is $C$ that depends only on $d\in \mathbb{N}$ such that  $$\aleph^j_n[b|\Z^k]\le  C \sup_{i,j\in \mathbb{N}}\|X^{1,n}_{i,j}\|_{\infty}^{2d}\frac{\max_r\sum_b \alpha^r_n[b]}{n}.$$

\end{proof}

\end{document}